\newtheorem{theorem}{Theorem}[section]
\newtheorem{lemma}[theorem]{Lemma}
\newtheorem{cor}[theorem]{Corollary}
\newtheorem{prop}[theorem]{Proposition}
\theoremstyle{definition}
\newtheorem{definition}[theorem]{Definition}
\newtheorem{example}[theorem]{Example}
\newtheorem{remark}[theorem]{Remark}
\newcommand{\dets}{(\det{S})}
\newcommand{\R}{\mathbb{R}}
\newcommand{\C}{\mathbb{C}}
\newcommand{\im}{\mathrm{Im}}
\newcommand{\re}{\mathrm{Re}}
\newcommand{\w}{\wedge}
\newcommand{\we}{\wedge}
\newcommand{\al}{\alpha}
\newcommand{\be}{\beta}
\newcommand{\ga}{\gamma}
\newcommand{\de}{\delta}
\newcommand{\e}{\epsilon}
\newcommand{\ep}{\epsilon}
\newcommand{\io}{\iota}
\newcommand{\s}{\sigma}
\newcommand{\om}{\omega}
\newcommand{\Om}{\Omega}
\newcommand{\fg}{\mathfrak{g}}
\newcommand{\gl}[1]{\mathrm{GL}(#1;\R)}
\newcommand{\fgl}[1]{\mathfrak{gl}(#1;\R)}
\newcommand{\sym}[1]{\mathrm{Sym}(#1;\mathbb{R})}
\newcommand{\psym}[1]{\mathrm{Sym}^+(#1;\mathbb{R})}
\newcommand{\so}[1]{\mathrm{SO}(#1)}
\newcommand{\fso}[1]{\mathfrak{so}(#1)}
\newcommand{\su}[1]{\mathrm{SU}(#1)}
\newcommand{\T}[1]{\mathrm{T}^{#1}}
\newcommand{\ft}[1]{\mathfrak{t}^{#1}}
\newcommand{\no}{\nonumber}
\newcommand{\pt}[2]{\frac{\partial #1}{\partial #2}}
\def\dets{(\det{S})}
\def\no{\nonumber}
\def\pt#1#2{\frac{\partial #1}{\partial #2}}
\def\R{\mathbb{R}}
\def\C{\mathbb{C}}
\def\im{\mathrm{Im}}
\def\re{\mathrm{Re}}
\def\w{\wedge}
\def\we{\wedge}
\def\al{\alpha}
\def\be{\beta}
\def\ga{\gamma}
\def\e{\epsilon}
\def\ep{\epsilon}
\def\s{\sigma}
\def\om{\omega}
\def\Om{\Omega}
\def\fg{\mathfrak{g}}
\def\gl#1{\mathrm{GL}(#1;\R)}
\def\fgl#1{\mathfrak{gl}(#1;\R)}
\def\sym#1{\mathrm{Sym}(#1;\mathbb{R})}
\def\psym#1{\mathrm{Sym}^+(#1;\mathbb{R})}
\def\so#1{\mathrm{SO}(#1)}
\def\fso#1{\mathfrak{so}(#1)}
\def\su#1{\mathrm{SU}(#1)}
\def\ft#1{\mathfrak{t}^{#1}}
\def\rm#1{\mathrm{#1}}
\begin{document}

\title{$G_2$-metrics arising from non-integrable special Lagrangian fibrations}

\author{Ryohei Chihara}
\address{Graduate School of Mathematical Sciences, The University of Tokyo, 3-8-1 Komaba Meguro-ku Tokyo 153-8914, Japan}
\email{rchihara@ms.u-tokyo.ac.jp}
\keywords{$G$-structures, $\su3$-structures, $G_2$-structures, Lagrangian fibrations, Einstein metrics}
\subjclass{53C10, 53C25, 53C38}

\begin{abstract}
{We study special Lagrangian fibrations of $\su3$-manifolds, not necessarily torsion-free. In the case where the fiber is a unimodular Lie group $G$, we decompose such $\su3$-structures into triples of solder 1-forms, connection 1-forms and equivariant $3\times3$ positive-definite symmetric matrix-valued functions on principal $G$-bundles over 3-manifolds. As applications, we describe regular parts of $G_2$-manifolds that admit Lagrangian-type 3-dimensional group actions by constrained dynamical systems on the spaces of the triples in the cases of $G=\T3$ and $\so3$.}
\end{abstract}

\maketitle
\setcounter{tocdepth}{1}
\tableofcontents
\numberwithin{equation}{section}


\section{Introduction}
\label{sec:int}

The geometry of $G_2$-structures on $7$-manifolds is closely related to that of $\su3$-structures on $6$-manifolds. For a one-parameter family $(\om(t),\psi(t))$ of $\su3$-structures on a 6-manifold $X$, the $3$-form $\om(t)\w dt + \psi(t)$ is a $G_2$-structure on $X\times (t_1,t_2)$. Here $\om(t)$ and $\psi(t)$ denote  the $2$- and $3$-form on $X$ defining an $\su3$-structure for each $t\in (t_1,t_2).$
Conversely, any $G_2$-structure on $Y$ is locally described by one-parameter families of $\su3$-structures on  $6$-dimensional hypersurfaces in $Y$ as above. This viewpoints has been studied by many authors \cite{BCFG, Br1,CS,Hi}. 

In the present paper, we study torsion-free $G_2$-structures given in terms of one-parameter families of $\su3$-structures on $\T3$- or $\so3$-bundles; the fibrations are special Lagrangian in the sense that the 2- and 3-forms defining each $\su3$-structure vanish along the fibers. 

Let $G$ be a connected $3$-dimensional Lie group with Lie algebra $\fg$, and $P$ the total space of a principal $G$-bundle over a $3$-manifold $M$. Denote by $(\om, \psi)$ $G$-invariant 2- and 3-forms defining a special Lagrangian $\su3$-structure on $P$ (in the sense above). We first prove that if $G$ is unimodular then such an $\su3$-structure decomposes uniquely into a triple $(e,a,S)$ of a solder 1-form $e$, a connection 1-form $a$ and a $G$-equivariant $3\times3$ positive-definite symmetric matrix-valued function $S$ on $P$ (Theorem \ref{thm:slag1}). 

Using this decomposition, we describe locally $\T3$- and $\so3$-invariant torsion-free $G_2$-structures whose definite 3-forms vanish along the fibers as orbits of constrained dynamical systems on the space of the triples $(e,a,S)$. Here a definite 3-form is the 3-form defining a $G_2$-structure on a 7-manifold. Let $(\om(t),\psi(t))$ be a one-parameter family of such $\su3$-structures defined on $(t_1,t_2)$, and $(e(t),a(t),S(t))$ the triples corresponding to $(\om(t),\psi(t))$. Then 
the 3-form $\om(t)\w \dets^{\frac{1}{2}}dt+\psi(t)$ is a $G_2$-structure on $P \times (t_1,t_2)$. From now, we omit the symbol of summation adopting Einstein's convention and denote by $\e_{ijk}$ the Levi-Civita symbol for the permutation of $\{1,2,3\}$. Besides, abbreviate $\hat{e}^i=\dfrac{1}{2}\e_{ijk}e^j\w e^k$. For a basis $\{X_1,X_2,X_3\}$ of $\fg$, we write $e=e^iX_i$ and $a=a^iX_i$. Our main results are as follows.

In the case of $\T3$-fibrations over $M$, we prove

\begin{theorem}\label{thm:a}
Let $G=\T3$, and $\{X_1,X_2,X_3\}$ a basis of the Lie algebra $\ft3$. The $G_2$-structure $\om(t)\w (\det S)^{\frac{1}{2}}dt+\psi(t)$ is torsion-free if and only if the triple $(e(t),a(t),S(t))$ is an orbit of the following constrained dynamical system on the space of triples $(e,a,S)$:
\begin{align} 
&de^i=0, \quad \Om_{ij}=\Om_{ji},\quad S_{i\al,\al}=0 \quad\text{(constraint conditions)};\label{eq:t2}\\
&\pt{e^i}{t}=0,\quad \pt{a^i}{t}=-\e_{i\al\be}\tilde{S}_{k\al,\be}e^k,\quad\pt{S_{ij}}{t}=-\Om_{ij}\quad\text{(equations of motion)}\label{eq:t1}
\end{align}
for $i,j=1,2,3$. Here $da=\Om_{ij}\hat{e}^jX_i$, $dS_{ij}=S_{ij,k}e^k$ and $\tilde{S}=\dets\cdot S^{-1}$.
\end{theorem}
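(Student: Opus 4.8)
Write $\varphi:=\om(t)\w\dets^{\frac12}dt+\psi(t)$ for the $G_2$-structure under consideration. I would use the standard characterization (Fern\'andez--Gray): $\varphi$ is torsion-free if and only if $d\varphi=0$ and $d({*}\varphi)=0$, where $*$ is the Hodge star of the metric induced by $\varphi$. Because Theorem \ref{thm:slag1} already expresses $\om,\psi$ — hence the companion $3$-form $\hat\psi$ and the induced $7$-metric — explicitly in terms of $(e,a,S)$, the first step is to record ${*}\varphi$ in this language; for a suitable normalization it has the shape ${*}\varphi=\tfrac12\om^2-\dets^{\frac12}\hat\psi\w dt$, with $\om,\psi,\hat\psi$ the explicit expressions in $e,a,S$ supplied there. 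I will also use the structure equations on a $\T3$-bundle: since $e^i$ and $a^i$ are $G$-invariant, $de^i$ and $da^i$ are horizontal $2$-forms, so write $de^i=T^i_j\hat e^j$, $da^i=\Om_{ij}\hat e^j$, $dS_{ij}=S_{ij,k}e^k$; and I will repeatedly use the contractions $e^i\w e^j=\e_{ijk}\hat e^k$, $\hat e^i\w e^j=\de_{ij}e^{123}$ (and the analogues with $\hat a^i=\tfrac12\e_{ijk}a^j\w a^k$) together with the cofactor identity $\widehat{(Se)}{}^i=\tilde S_{ij}\hat e^j$ (and its analogue in the $a$'s), which is the origin of the matrix $\tilde S=\dets\cdot S^{-1}$.

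Second, I would split $d=d_P+dt\w\partial_t$ on $P\times(t_1,t_2)$, with $d_P$ the exterior differential on $P$, and separate the $dt$-free and $dt$-parts. One obtains that $d\varphi=0$ is equivalent to
\[
d_P\psi=0,\qquad \partial_t\psi=d_P\!\left(\dets^{\frac12}\om\right),
\]
and $d({*}\varphi)=0$ is equivalent to
\[
d_P(\om^2)=0,\qquad \partial_t\!\left(\tfrac12\om^2\right)=d_P\!\left(\dets^{\frac12}\hat\psi\right)
\]
(up to signs fixed by the conventions). The first equation in each line says that $(\om(t),\psi(t))$ is half-flat for every $t$; the second ones constitute a warped Hitchin flow.

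Third — the substantive step — I would substitute the expressions of Theorem \ref{thm:slag1} and the structure equations into these four equations and collect terms by their bidegree in the $e$'s and $a$'s (a grading that is canonical at each fixed $t$). Unravelling the two half-flat equations with the $\e$-identities yields the constraint conditions \eqref{eq:t2}: the expansion of $d_P\psi=0$ forces $de^i=0$, and, feeding this back, the rest forces $\Om_{ij}=\Om_{ji}$; and $d_P(\om^2)=0$, once $de^i=0$ is used, reduces to $S_{i\al,\al}=0$. For the evolution: $\partial_t\psi=d_P(\dets^{\frac12}\om)$ splits into a part giving $\partial_t e^i=0$ and a part giving $\partial_t a^i=-\e_{i\al\be}\tilde S_{k\al,\be}e^k$ — here the factor $\dets^{\frac12}$, the $\det S$ already built into the decomposition of Theorem \ref{thm:slag1}, and the cofactor identity together recombine into $\tilde S$, the residual derivatives of $\det S$ cancelling — while $\partial_t(\tfrac12\om^2)=d_P(\dets^{\frac12}\hat\psi)$, modulo the relations just obtained, collapses to $\partial_t S_{ij}=-\Om_{ij}$. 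Since each step is an equivalence, both implications of the theorem follow at once. One should also verify that the system \eqref{eq:t2}--\eqref{eq:t1} is self-consistent and preserved by the flow: for instance $d_P\circ d_P a^i=0$ together with $de^i=0$ forces $\Om_{i\al,\al}=0$, which is precisely $\partial_t$ of the constraint $S_{i\al,\al}=0$ under $\partial_t S_{ij}=-\Om_{ij}$, so no hidden condition arises, and $\partial_t e^i=0$ and the symmetry of $\partial_t S$ are compatible with $de^i=0$ and $\Om_{ij}=\Om_{ji}$.

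The main obstacle is organizational rather than conceptual. The four equations do not match one-to-one with the six items of \eqref{eq:t2}--\eqref{eq:t1}: each expands into several tensorial components that must be disentangled in the correct order — in particular $d_P\psi=0$ is a coupled system, and $de^i=0$ has to be extracted and reinserted before $\Om_{ij}=\Om_{ji}$ and $S_{i\al,\al}=0$ emerge cleanly. The second delicate point is keeping track of the powers of $\det S$, which enter through the normalization of $(\om,\psi)$, through ${*}\varphi$, and through the cofactor identity, and checking that they recombine into the single clean occurrence of $\tilde S=\dets\cdot S^{-1}$ in \eqref{eq:t1} rather than leaving stray derivatives of $\det S$ behind; this is where a computational slip would most easily occur. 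The remaining work (the Levi-Civita contractions, and $\partial_t$ of products of the $\hat e$'s and $\hat a$'s) is routine.
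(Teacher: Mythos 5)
Your strategy is essentially the paper's: reduce torsion-freeness of $\om(t)\w\dets^{\frac12}dt+\psi(t)$ to the half-flat constraints together with the evolution equations $\pt{\psi}{t}=d(\dets^{\frac12}\om)$ and $\pt{}{t}\left(\frac12\om\w\om\right)=d(\dets^{\frac12}\psi^{\#})$ (Proposition \ref{prop:flow}), substitute the expressions (\ref{eq:su3c})--(\ref{eq:su3f}) for $\om$, $\psi$, $\psi^{\#}$, $\frac12\om\w\om$ in terms of $(e,a,S)$, and sort components by bidegree in $e$ and $a$; the paper carries a general warping function $f$ through this computation (Proposition \ref{prop:loc_t3_1}) and sets $f=\dets^{\frac12}$ only at the end, but that difference is cosmetic. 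Your identification of the constraints with half-flatness and of $\pt{e^i}{t}=0$ and $\pt{a^i}{t}$ with the components of the $\psi$-flow matches Corollary \ref{cor:4a} and the proof of Proposition \ref{prop:loc_t3_1}.

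The one place where the sketch is too optimistic is the claim that $\pt{}{t}\left(\frac12\om\w\om\right)=d(\dets^{\frac12}\psi^{\#})$ ``collapses to $\pt{S_{ij}}{t}=-\Om_{ij}$'' modulo the earlier relations. Writing $\pt{a^i}{t}=q_{ij}\hat{e}^j$, the $\hat{a}^i\hat{e}^j$-component of that equation does give $\pt{S_{ij}}{t}=-\Om_{ij}$, but its $a^ie^{123}$-component is a second, independent linear condition on the same coefficients $q_{ij}$, namely $-\e_{i\al\be}q_{\al\ga}S_{\ga\be}=\dets_{,i}$ (this is (\ref{eq:5_34}) specialized to $f=\dets^{\frac12}$ and $T=0$). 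Since $q_{ij}$ has already been fixed by the $\psi$-flow, one must prove that this extra condition holds automatically under the constraint $S_{i\al,\al}=0$; otherwise the ``if'' direction of Theorem \ref{thm:a} is not established, since an orbit of the stated system could a priori violate the second evolution equation. This is exactly the content of Lemma \ref{lem:5b}, the commutator identity $CS-SC=\dets^{-\frac12}SDS$ with $C_{ij}=\e_{i\al\be}(\dets^{-\frac12}\tilde{S}_{j\al})_{,\be}$ and $D_{ij}=\e_{\al ij}S_{\al\be,\be}$, whose proof (computing $\frac12 d[b\w b]$ for $b=\dets^{-\frac12}\tilde{S}_{ij}e^jY_i$ in two ways) is not routine index bookkeeping. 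The consistency check you do propose concerns preservation of the constraints along the flow, which is a different and easier matter; the missing step is this algebraic redundancy among the components of the two flow equations at each fixed $t$.
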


Moreover by scaling, we obtain

\begin{theorem}\label{thm:a1}
Every torsion-free $\T3$-invariant $G_2$-structure whose definite 3-form vanishes along the fibers is locally given by some orbit of the constrained dynamical system in Theorem \ref{thm:a}.
\end{theorem}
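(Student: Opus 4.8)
The plan is to put a torsion-free $\T3$-invariant $G_2$-structure $\varphi$ whose definite 3-form vanishes along the orbits into the normal form of Theorem \ref{thm:a} --- locally $\varphi=\om(t)\w(\det S(t))^{\frac12}dt+\psi(t)$ for a one-parameter family of special Lagrangian $\su3$-structures $(\om(t),\psi(t))$ with triples $(e(t),a(t),S(t))$ --- and then invoke that theorem, which identifies torsion-freeness of $\varphi$ with $(e(t),a(t),S(t))$ being an orbit of the constrained system \eqref{eq:t2}--\eqref{eq:t1}; this is exactly the assertion. Write $\xi_1,\xi_2,\xi_3$ for the fundamental vector fields, the action being free on the regular set.

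To split off the base I would use multi-moment maps. Since $\varphi$ is invariant and closed, $\iota_{\xi_1}\iota_{\xi_2}\iota_{\xi_3}\varphi=0$, and $\T3$ is abelian, Cartan's formula makes each $\iota_{\xi_i}\iota_{\xi_j}\varphi$ a closed 1-form annihilating the vertical distribution, so locally $\iota_{\xi_i}\iota_{\xi_j}\varphi=d\mu_{ij}$ for invariant functions $\mu_{ij}$. Likewise, using $d\star_\varphi\varphi=0$, the 1-form $\iota_{\xi_1}\iota_{\xi_2}\iota_{\xi_3}\star_\varphi\varphi$ is closed, invariant and annihilates the verticals, hence locally $dt$ for an invariant function $t$. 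On the regular set $d\mu_{12},d\mu_{13},d\mu_{23},dt$ are pointwise independent, so $(\mu_{12},\mu_{13},\mu_{23},t)$ are coordinates on the orbit space $N:=Y/\T3\cong M\times(t_1,t_2)$; pulling back through $Y\to N$ and trivializing gives $Y\cong P\times(t_1,t_2)$ with $P\to M$ a principal $\T3$-bundle and coordinate field $\partial_t$, which satisfies $\varphi(\partial_t,\xi_i,\xi_j)=d\mu_{ij}(\partial_t)=0$. Choosing $t$ from $\star_\varphi\varphi$ should moreover force $\partial_t$ to be $g_\varphi$-orthogonal to every slice $P_t:=P\times\{t\}$ --- in the flat $\T3$-fibration $\iota_{\xi_1}\iota_{\xi_2}\iota_{\xi_3}\star_\varphi\varphi$ is precisely the missing coframe 1-form --- and establishing this orthogonality is the step I expect to be the main obstacle.

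Granting it, $\psi(t):=\varphi|_{P_t}$ and $\om(t):=\iota_n\varphi|_{P_t}$, with $n=\partial_t/|\partial_t|_{g_\varphi}$ the unit normal, form the $\su3$-structure that $\varphi$ induces on the hypersurface $P_t$; since $\partial_t$ annihilates the $\mu_{ij}$ and $\varphi|_{P_t}$ vanishes on the fibers of $P_t\to M$, both $\om(t)$ and $\psi(t)$ vanish along those fibers, so $(\om(t),\psi(t))$ is a one-parameter family of special Lagrangian $\su3$-structures and Theorem \ref{thm:slag1} produces the triples $(e(t),a(t),S(t))$ --- with $de^i=0$ automatic, the solder forms being built from the closed 1-forms $d\mu_{jk}$. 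From $n^\flat=|\partial_t|_{g_\varphi}\,dt$ we get $\varphi=\om(t)\w|\partial_t|_{g_\varphi}\,dt+\psi(t)$, and here scaling enters: the only remaining freedom preserving the slices is a reparametrization $t\mapsto\tilde t(t)$, which rescales $|\partial_t|_{g_\varphi}$ by a function of $t$ alone, so it suffices to check that $g_\varphi(\partial_t,\partial_t)$ equals $\det S(t)$ up to such a function --- a pointwise identity between the explicit decomposition of Theorem \ref{thm:slag1} and the $\star_\varphi\varphi$-normalization of $t$. After reparametrizing, $\varphi=\om(t)\w(\det S(t))^{\frac12}dt+\psi(t)$, and Theorem \ref{thm:a} completes the proof. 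So the argument is formal except for two local statements --- that the $t$ coming from $\star_\varphi\varphi$ makes the slices orthogonal to $\partial_t$, and that the resulting normal factor is $(\det S)^{\frac12}$ --- both of which I would prove by reducing to the $G_2$ linear algebra underlying Theorem \ref{thm:slag1}.
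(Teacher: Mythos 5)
Your skeleton---put $\varphi$ locally into the form $\om(t)\w f\,dt+\psi(t)$ for a one-parameter family of $\T3$-invariant sLag $\su3$-structures and then quote Theorem \ref{thm:a}---is the same as the paper's (which combines Proposition \ref{prop:red_1} with Proposition \ref{prop:loc_t3_1}), but you handle the two technical steps by a genuinely different route. For the slicing, the paper uses only the single invariant function $\mu$ with $d\mu=-\io(X_3^*)(\io(X_2^*)(\io(X_1^*)\star\varphi))$ (your $t$) and flows along $\mathrm{grad}(\mu)/|\mathrm{grad}(\mu)|^{2}$; since the flow direction is a multiple of $\mathrm{grad}(\mu)$, orthogonality of $\partial_t$ to the slices is automatic and your ``main obstacle'' never arises. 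In your multi-moment coordinate system $(\mu_{12},\mu_{13},\mu_{23},t)$ the obstacle is real but does go through as you predict: in the normal form of Lemma \ref{lem:can1} one has $d\mu_{ij}\propto E^k$ and $dt\propto Z$, and $Z$ is orthogonal to the span of the $E^k$ (check it on $\phi_0$ and use transitivity of $G_2$ on null triples), so $\mathrm{grad}(t)\perp\mathrm{grad}(\mu_{ij})$ and the coordinate field $\partial_t$ is normal to the slices. For the normalization, the paper does not prove a pointwise identity: it computes the torsion-free system for an arbitrary positive $f$ and finds $d\bigl(f\dets^{-\frac12}\bigr)=0$ among the equations, after which a reparametrization of $t$ finishes the job. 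Your pointwise route also works and is in fact sharper: writing $X_j^*$ in terms of the orthonormal $V_k$ via $Q$ gives $dt=\pm(\det Q)^{-1}Z$ with $|Z|=1$, hence $|dt|=\dets^{-\frac12}$ and $g_\varphi(\partial_t,\partial_t)=\dets$ exactly, so with your choice of $t$ no reparametrization is even needed. Two small corrections: the vanishing $\io(X_3^*)(\io(X_2^*)(\io(X_1^*)\varphi))=0$ is the Lagrangian hypothesis, not a consequence of invariance and closedness (those only make $\varphi(X_1^*,X_2^*,X_3^*)$ constant); and your observation that $de^i=0$ comes for free from $e^k=\tfrac12\e_{ijk}\,d\mu_{ij}|_{P_t}$ is correct but redundant, since it reappears among the constraints of Corollary \ref{cor:4a}. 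What your approach buys is the explicit coordinate description of the orbit space and the pointwise identification $f=\dets^{\frac12}$; what the paper's buys is that both delicate points are absorbed into steps it needs anyway.
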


\begin{remark}In  \cite{MS}, 
Madsen and Swann  already obtained results similar to Theorem \ref{thm:a} and \ref{thm:a1}. See Remark \ref{rem:ms}.
\end{remark}

In the case of $\so3$-fibrations over $M$, we prove

\begin{theorem}\label{thm:b}
Let $G=\so3$, and $\{Y_1,Y_2,Y_3\}$ a basis of the Lie algebra $\fso3$ satisfying $[Y_i,Y_j]=\e_{ijk}Y_k$ for $i,j=1,2,3.$ The $G_2$-structure $\om(t)\w (\det S)^{\frac{1}{2}}dt+\psi(t)$ is torsion-free if and only if the triple $(e(t),a(t),S(t))$ is an orbit of the following constrained dynamical system on the space of triples $(e,a,S)$:
\begin{align}
    &d_He=0, \quad S_{i\al;\al}=0 \quad (\text{constraint conditions});\label{eq:const}\\
    &\pt{e^i}{t}=\tilde{S}_{ij}e^j,\quad \pt{S}{t}=-\mathrm{tr}(\tilde{S})S +2\dets I -\Om \quad \text{(equations of motion)}\label{eq:motion}
\end{align}
for $i=1,2,3$. Here $I=(\de_{ij})$, and $d_H$ denotes the covariant derivation for each connection $a(t)$. Also $S_{ij;k}$ and $\Om_{ij}$ are defined by  $d_HS_{ij}=S_{ij;k}e^k$ and $d_Ha=\Om_{ij}\hat{e}^jY_i$.
\end{theorem}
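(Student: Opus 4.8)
The plan is to reduce the torsion-freeness of the $G_2$-structure $\phi(t)=\om(t)\w(\det S)^{1/2}dt+\psi(t)$ to the classical pair of equations $d\phi=0$ and $d\ast_\phi\phi=0$, and then to translate each of these into conditions on the triple $(e(t),a(t),S(t))$ using Theorem \ref{thm:slag1}. I would first recall that, by the Fern\'andez--Gray / Hitchin formalism for evolving $\su3$-structures, $d\phi=0$ is equivalent to $d\om(t)=0$ together with $\partial_t\psi(t)=-d\big((\det S)^{1/2}\om(t)\big)$ — here the spatial exterior derivative $d$ on $P$ — while $d\ast_\phi\phi=0$ is equivalent to $d\hat\psi(t)=0$ together with $\partial_t\big((\det S)^{1/2}\hat\psi(t)\big)+\tfrac12\partial_t(\om(t)^2)=d\big((\det S)^{1/2}\psi(t)\big)$, where $\hat\psi=\ast_{\om,\psi}\psi$ is the companion 3-form. (I would state these precisely with the normalization conventions of the paper, taking care that the $(\det S)^{1/2}$ factor multiplying $dt$ is exactly what makes the induced metric the Riemannian product up to the conformal factor built into the $\su3$-data.)

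Next I would substitute the explicit description of $\om,\psi,\hat\psi$ in terms of $(e,a,S)$ that underlies Theorem \ref{thm:slag1}. Since the fibration is special Lagrangian, $\om$ is a sum of a ``horizontal-vertical'' term pairing $e^i$ with the vertical coframe dual to $a$ and weighted by $S$, while $\psi$ and $\hat\psi$ are respectively (essentially) $\det(S)^{1/2}$ times the horizontal volume plus cross terms, and the vertical volume plus cross terms. Feeding these into $d\om=0$ and $d\hat\psi=0$ and extracting the purely horizontal, mixed, and purely vertical components will produce, on the one hand, the first structure equation $d_He=0$ — using that for $\so3$ the structure constants $\e_{ijk}$ make $d e^i$ contain the connection term $\e_{ijk}a^j\w e^k$, so that $de^i=0$ is \emph{not} the right closedness condition but $d_He^i=0$ is — and on the other hand the divergence constraint $S_{i\al;\al}=0$, which comes from the component of $d\hat\psi$ (or $d\psi$) along $\hat e^i\w(\text{vertical})$; the symmetry of $\Om$ that appeared in the torus case is here automatic or absorbed because $a$ is now a genuine connection with curvature $d_Ha$, not a flat one.

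Then I would read off the evolution equations. The $t$-derivative conditions split by the same trichotomy: the mixed component of $\partial_t\psi=-d((\det S)^{1/2}\om)$ will give $\partial_t e^i$, and a short computation with $\tilde S=(\det S)S^{-1}$ — note $(\det S)^{1/2}\cdot(\det S)^{1/2}S^{-1}=\tilde S$ — should collapse the right-hand side to $\tilde S_{ij}e^j$. The $\partial_t S$ equation comes from the ``companion'' Hitchin equation: the horizontal part of $\partial_t\hat\psi$ together with $\tfrac12\partial_t\om^2$ yields, after using $\partial_t\det S=(\det S)\,\mathrm{tr}(S^{-1}\partial_t S)$ and the identity relating $\partial_t S$ to $\partial_t\tilde S$, the stated $\partial_t S=-\mathrm{tr}(\tilde S)S+2(\det S)I-\Om$; the curvature term $\Om$ enters precisely through $d_Ha$ because in the $\so3$ case $d(\text{vertical coframe})$ picks up curvature. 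For the converse direction I would verify that the listed constraints are preserved under the flow — i.e. that $\partial_t$ of each constraint vanishes modulo the equations of motion and the Bianchi identity $d_H\Om=0$ — which, combined with the forward implication, gives the ``if and only if''.

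The main obstacle I anticipate is bookkeeping the curvature of $a(t)$ correctly: unlike the $\T3$ case, the vertical coframe is not closed, its exterior derivative involves both the Lie-algebra structure constants of $\fso3$ and the curvature 2-form $d_Ha$, and these two contributions must be disentangled cleanly so that the ``constant'' term $2(\det S)I$ (coming from the structure constants) is separated from the $-\Om$ term (coming from curvature). Getting the normalization of $\hat\psi$ and the conformal weights of $S$ consistent throughout — so that no stray factor of $\det S$ survives — will be the delicate part; everything else is a finite linear-algebra computation in the $3\times3$ symmetric matrices, organized by the horizontal/mixed/vertical decomposition of forms on $P\times(t_1,t_2)$.
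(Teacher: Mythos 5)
Your overall strategy --- reduce torsion-freeness of $\phi=\om(t)\w f\,dt+\psi(t)$ to constraint-plus-evolution equations for the family of $\su3$-structures, then substitute the $(e,a,S)$-parametrization and match horizontal/mixed/vertical components --- is the paper's route (Proposition \ref{prop:flow}, then Propositions \ref{prop:6b} and \ref{prop:6_3} with $f=\dets^{\frac12}$). But your statement of the reduction is wrong in a way that would sink the whole argument. Writing $d=d_X+dt\w\partial_t$ on $P\times(t_1,t_2)$, the $dt$-free component of $d\phi$ is $d_X\psi$ and the $dt$-free component of $d\star_{\phi}\phi$ is $\tfrac12 d_X(\om\w\om)$; hence $d\phi=0$ is equivalent to $d\psi=0$ together with $\partial_t\psi=d(f\om)$, and $d\star_{\phi}\phi=0$ is equivalent to $d(\om\w\om)=0$ together with $\partial_t(\tfrac12\om\w\om)=d(f\psi^{\#})$. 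That is, the constraints are the \emph{half-flat} conditions. You instead take the constraints to be $d\om=0$ and $d\hat\psi=0$, which are the complementary half of the $\su3$-torsion; besides not being what closedness and coclosedness of $\phi$ give, this is fatal in the $\so3$ case: by Corollary \ref{cor:6a}(3) an $\so3$-invariant sLag $\su3$-structure is never symplectic (the summand $\dets^{-\frac12}\tilde S_{ij}\hat a^i\w e^j$ of $d\om$ cannot vanish for positive-definite $S$), so under your constraints the system would be empty, contradicting the theorem and the Bryant--Salamon examples. Your second evolution equation is also garbled: since $\star_{\phi}\phi=-\psi^{\#}\w f\,dt+\tfrac12\om\w\om$ contains no $f\hat\psi$ summand, there is no $\partial_t\bigl(f\hat\psi\bigr)$ term, and the right-hand side must be $d(f\psi^{\#})$, not $d(f\psi)$.

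With the reduction corrected, the rest of your plan is essentially the paper's: half-flatness becomes $T_{ij}=0$ (i.e.\ $d_He=0$) and $S_{i\al;\al}=0$ via Corollary \ref{cor:6a}; the mixed component of $\partial_t\psi=d(f\om)$ yields $\partial_t e^i=f\dets^{-\frac12}\tilde S_{ij}e^j=\tilde S_{ij}e^j$ after setting $f=\dets^{\frac12}$; and the extra vertical terms $\dets^{-\frac12}\tilde S_{ij}\hat a^ie^j$ in $d\om$ and $-\dets^{\frac12}\hat a^k\hat e^k$ in $d\psi^{\#}$ (absent for $\T3$) are exactly what produce the nonzero $\partial_t e$ and the $2\dets I$ term, as you anticipate. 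Two ingredients you omit: the evolution of $a(t)$ is not an independent equation but is forced by $d_He=0$ and $\partial_t e$ (Lemma \ref{lem:6_9}), and one needs the commutator identity of Lemma \ref{lem:5b} to see that the two separate equations determining the coefficients of $\partial_t a$ are compatible once $S_{i\al;\al}=0$. Finally, the ``if and only if'' does not require proving that the flow preserves the constraints --- the reduction is an equivalence at each fixed $t$ --- though that preservation is true and is recorded separately in the paper.
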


\begin{remark}
The condition $d_He=0$ says that the connection $a$ is the Levi-Civita one of the metric on $M$ given by the local orthonormal coframe $e=e^iY_i$. Then $\Om_{ij}e^i\otimes e^j$ coincides with the Einstein tensor. 
\end{remark}

By scaling, we obtain

\begin{theorem}\label{thm:b1}
Every torsion-free $\so3$-invariant $G_2$-structure whose definite 3-form vanishes along the fibers is locally given by some orbit of the constrained dynamical 
system in Theorem \ref{thm:b}.
\end{theorem}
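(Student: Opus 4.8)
The plan is to deduce Theorem~\ref{thm:b1} from Theorem~\ref{thm:b} by the same scaling argument that produces Theorem~\ref{thm:a1} from Theorem~\ref{thm:a}. Start with an arbitrary torsion-free $\so3$-invariant $G_2$-structure on a $7$-manifold $Y$ whose definite $3$-form vanishes along the $\so3$-orbits. Near a regular point, the orbit space is locally a $4$-manifold, and after choosing a local section one may write $Y$ locally as $P\times(t_1,t_2)$ with $P$ a principal $\so3$-bundle over a $3$-manifold $M$; the remaining issue is to arrange that the coordinate $t$ is the one adapted to the construction, i.e.\ that the metric splits as (fiber/base part) $+\, \dets\, dt^2$. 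This is exactly where the scaling enters: a general splitting gives a $G_2$-structure of the form $\om(t)\w f(t)\,dt + \psi(t)$ for some positive function $f$, and one reparametrizes $t$ (allowed since the construction and the dynamical system are insensitive to how $(t_1,t_2)$ is coordinatized) and rescales the triple so that $f=\dets^{1/2}$.

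Concretely, I would proceed as follows. First, recall from the setup in Section~\ref{sec:int} that a one-parameter family $(\om(t),\psi(t))$ of special Lagrangian $\su3$-structures on $P$ yields, via Theorem~\ref{thm:slag1}, a one-parameter family of triples $(e(t),a(t),S(t))$, and conversely. Second, observe that the $G_2$-structure $\Phi = \om(t)\w g(t)\,dt + \psi(t)$ obtained from an arbitrary positive reparametrization factor $g(t)$ is, after the substitution $t\mapsto \tau$ with $d\tau = (g(t)\,\dets^{-1/2})\,dt$ along the curve, brought into the normalized form $\om\w\dets^{1/2}d\tau + \psi$; one must check that this substitution is a genuine change of variable (the integrand is positive, hence $\tau$ is a strictly monotone function of $t$) and that the resulting $\tau$-dependent family is still a smooth family of special Lagrangian $\su3$-structures on $P$ — this is immediate since the forms $\om,\psi$ are unchanged, only the parameter is relabeled. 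Third, apply Theorem~\ref{thm:b}: the torsion-free condition on $\Phi$ is equivalent to the normalized family's triple $(e(\tau),a(\tau),S(\tau))$ solving the constrained system \eqref{eq:const}--\eqref{eq:motion}. Hence every such $G_2$-structure is, locally, an orbit of that system.

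The main obstacle I anticipate is the reduction to the product form $P\times(t_1,t_2)$ with a \emph{special Lagrangian} $\su3$-structure on each slice — that is, verifying that in a neighborhood of a regular orbit the $G_2$-structure really is of the shape $\om(t)\w g(t)\,dt + \psi(t)$ with the $\su3$-forms vanishing along the fibers. One needs: (i) that the regular set is open dense and carries a free $\so3$-action (or at least that we may pass to a slice on which the action is free, shrinking as needed), (ii) that the quotient is locally a $4$-manifold fibering over a $3$-manifold $M$ with the extra $dt$-direction transverse to the orbits, and (iii) that the hypothesis ``the definite $3$-form vanishes along the fibers'' descends to ``$\om(t),\psi(t)$ vanish along the fibers of $P\to M$'' for each slice. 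Points (i)--(ii) are standard slice-theorem/submersion arguments once one fixes a Riemannian metric (the $G_2$-metric itself); point (iii) is essentially the definition of ``special Lagrangian fibration'' used throughout the paper and should follow by restricting the $3$-form and $2$-form ($=$ contraction of the associated $4$-form, or the $\om$ extracted via Theorem~\ref{thm:slag1}) to the leaves. Everything after this reduction is the scaling bookkeeping above, which is routine; I would present the argument in parallel with the proof of Theorem~\ref{thm:a1} and simply indicate the changes needed for $G=\so3$, the only substantive difference being that $a(t)$ now varies with $t$ and $d_H$ is $t$-dependent, which the normalization does not affect.
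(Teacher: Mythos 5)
Your proposal follows essentially the same route as the paper: reduce the $G_2$-structure near a regular orbit to the form $\om(t)\w f\,dt+\psi(t)$ over a product $P\times(t_1,t_2)$, invoke the torsion-free characterization in terms of $(e(t),a(t),S(t),f(t))$, and reparametrize $t$ to normalize $f=\dets^{1/2}$; the paper does exactly this by citing Proposition \ref{prop:red_1} and Proposition \ref{prop:6_3}. The two points you leave as ``should follow'' are precisely what those propositions supply: the reduction to slices carrying sLag $\su3$-structures is obtained not from a generic slice theorem but from the closed $1$-form $\io(X_3^*)\io(X_2^*)\io(X_1^*)\star\phi$ (closed because $\phi$ is coclosed) whose gradient gives the transversal, and the legitimacy of a reparametrization $d\tau=f\dets^{-1/2}\,dt$ depending on $t$ alone rests on the condition $d(f\dets^{-\frac{1}{2}})=0$ in \eqref{eq:6_5}, since a priori $f$ is a function on all of $P\times(t_1,t_2)$, not just of $t$.
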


\begin{remark} 
We can see that equations of motion preserve the constraint conditions in Theorem \ref{thm:a} and \ref{thm:b}. This is immediate for $G=\T3$. See (\cite{Chi}, Proposition 7) for $G=\so3$.
\end{remark}

The present paper is organized as follows. In Section \ref{sec:pre}, we review definitions and some basic results of $\su3$- and $G_2$-structures. In Section \ref{sec:slag}, we consider $G$-invariant special Lagrangian fibrations of $\su3$-manifolds and prove the decomposition (Theorem \ref{thm:slag1}). We apply this theorem to $G$-invariant $G_2$-structures whose definite 3-form vanishes along the fibers in Section \ref{sec:red}. In Section \ref{sec:loc_t3} and \ref{sec:so3}, as applications of the above results, we describe locally $\T3$- and $\so3$-invariant $G_2$-manifolds whose definite 3-forms vanish along the fibers as orbits of constrained dynamical systems on the spaces of the triples (Theorem \ref{thm:a}, \ref{thm:a1}, \ref{thm:b} and \ref{thm:b1}).

\subsection*{Conventions} 

We omit the symbol of summation adopting Einstein's convention, and often abbreviate $a\w b= ab$ and $c^i\w c^j= c^{ij}$. Also we use the Levi-Civita symbol $\e_{ijk}$ and write $\hat{c}^i=(1/2)\e_{ijk}c^{jk}$ for a triple of 1-forms $\{c^1,c^2,c^3\}$. Denote by $\tilde{A}$ the adjugate matrix of an $n\times n$ matrix $A$ satisfying $\tilde{A}A=\det{(A)}I$. Here $I=(\delta_{ij})$ is the identity matrix. Let $G$ be a connected 3-dimensional Lie group with Lie algebra $\fg$, and $P\to M$ a principal $G$-bundle over a 3-manifold $M$. An equivariant $\fg$-valued 1-form $e$ with respect to the adjoint action on $\fg$ is called a {\it solder 1-form} if $e=e^iX_i$ satisfies $e^{123} \neq 0$ at each $u\in P$ for a basis $\{X_1,X_2,X_3\}$ of $\fg$.


\section{$\su3$- and $G_2$-structures}
\label{sec:pre}

In this section we review $\su3$-structures on 6-manifolds and $G_2$-structures on 7-manifolds, emphasizing relations between two structures. Throughout this paper, we assume all objects are of class $C^{\infty}$.

\subsection{$\su3$-structures}

Let $X$ be a 6-manifold, $\mathrm{Fr}(X)$ the frame bundle over $X$, which is a principal $\gl6$-bundle over $X$. 
We have the natural inclusion $\su3 \subset \gl6$ by the standard identification $\R^6 \cong \C^3$, where $z^i=x^i+\sqrt{-1}y^i$ for $i=1,2,3.$
A subbundle of $\mathrm{Fr}(X)$ is said to be an {\it $\su3$-structure} on $X$ if the structure group is contained in $\su3$.
For an $\su3$-structure on $X$, we have the associated real 2-form $\om$ and real 3-form $\psi$ pointwisely isomorphic to $\omega_0 = \sum_{i=1}^{3}dx^i\w dy^i$ and $\psi_0 = \mathrm{Im}(dz^1dz^2dz^3)$ on $\C^3$, respectively. Here $\im(*)$ denotes the imaginary part of $*$. We can identify an $\su3$-structure with such a pair $(\om,\psi)$. 
Using this identification, for an $\su3$-structure $(\om,\psi)$, we define by $\psi^{\#}$ the real 3-form on $X$ corresponding to  $\re(dz^1\w dz^2\w dz^3)$. Here $\re(*)$ denotes the real part of $*$, and $\psi^{\#}$ is the same as $-\hat{\psi}$ in \cite{Hi}.

It is useful to compare general cases with the following basic example.

\begin{example}\label{ex:R^6}
Let $X$ be $\R^6\cong \C^3$. We have
\begin{align*}
dz^{123} = (dx^{123} - \sum_{k=1}^{3}dx^k\widehat{dy}^k) + \sqrt{-1}(-dy^{123} + \sum_{k=1}^{3}dy^k\widehat{dx}^k),
\end{align*}
where 
\begin{align*}
\widehat{dy^k} = \frac{1}{2}\sum_{i,j}^{ }\epsilon_{ijk}dy^idy^j \quad \text{and} \quad \widehat{dx^k}= \frac{1}{2}\sum_{i,j}\epsilon_{ijk}dx^idx^j
\end{align*}
for $k=1,2,3$. 
Thus the forms $\om_0,\psi_0$ and $\psi_0^{\#}$ associated with the standard $\su3$-structure on $\R^6$ are expressed as follows.
\begin{align*}
&\omega_0= \sum_{k=1}^{3}dx^kdy^k ,\\
&\psi_0 = \mathrm{Im}(dz^1dz^2dz^3)= -dy^{123} + \sum_{k=1}^{3}dy^k\widehat{dx}^k,\\
&\psi_0^{\#}=\re(dz^1dz^2dz^3)=dx^{123} - \sum_{k=1}^{3}dx^k\widehat{dy}^k.
\end{align*}
\end{example}

Many authors (including \cite{Br1,Ca,CS,Gr}) studied $\su3$-structures satisfying some integrability conditions. In particular, the following conditions are important.

\begin{definition}\label{def:tor_su3}
Let $(\om,\psi)$ be an $\su3$-structure on $X$. 
\begin{enumerate}

\item $(\omega, \psi)$ is said to be {\it  half-flat} if $d\left(\omega\wedge\omega\right)= 0$ and $d\psi = 0$.

\item $(\omega,\psi)$ is said to be {\it torsion-free} if $d\omega =0$ and $d\psi = d\psi^{\#}=0$.

\end{enumerate}
\end{definition}

\begin{remark}\label{rem:su}
In general, a $G$-structure $F$ on an $n$-manifold $N$ is said to be {\it torsion-free} if the tautological 1-form $\theta \in \Om^1(F;\R^n)$ satisfies $d_H\theta :=d\theta+a\w \theta=0$ for some connection 1-form $a \in \Om^1(F;\rm{Lie}(G))\subset \Om^1(F;\fgl{n})$. The torsion-free condition in Definition \ref{def:tor_su3} is known to be equivalent to this general definition of the torsion-free condition for $G=\su3$.
\end{remark} 

By Remark \ref{rem:su} and the Ambrose-Singer theorem, we see that  an $\su3$-structure $(\omega,\psi)$ is torsion-free if and only if the associated metric $h_{(\om,\psi)}$ has the holonomy group contained in $\su3$. Moreover, If an $\su3$-structure is torsion-free, then the associated metric on $X$ is Ricci-flat.
A Riemannian metric $h$ on $X$ is said to be {\it holonomy $\su3$} if the holonomy group coincides with $\su3$.


\subsection{$G_2$-structures}

Let $Y$ be a 7-manifold, $\mathrm{Fr}(Y)$ the frame bundle over $Y$, which is a principal $\gl7$-bundle over $Y$. Let us define $G_2$-structures on $Y$ as in the above subsection. The Lie group $G_2$ is defined as the linear automorphism group of the standard definite $3$-form $\phi_0$ (presented in Example \ref{ex:R^7}) on $\R^7$. It is known that this group coincides with the linear automorphism group of the cross product structure on $\im\mathbb{O}$, where $\im\mathbb{O}$ denotes the 7-dimensional imaginary part of the octonion algebra $\mathbb{O}$.
A subbundle of $\mathrm{Fr}(Y)$ is said to be a {\it $G_2$-structure} on $Y$ if the structure group is contained in $G_2$.
For a $G_2$-structure  on $Y$, we have the associated real 3-form $\phi$ on $Y$ pointwisely isomorphic to $\phi_0$ on $\R^7$. Such a 3-form is called a {\it definite 3-form}. Then we identify the definite 3-form $\phi$ with a $G_2$-structure on $Y$ as in the above subsection. Since $G_2 \subset \mathrm{SO}(7)$, we have the Riemannian metric $g_\phi$ and  orientation associated with a $G_2$-structure $\phi$ on $Y$. Besides, we denote by $\star_{\phi}$ the Hodge star associated with $\phi$, and simply write $\star$ in situations without confusion.

The following example is the model of $G_2$-structures as in Example \ref{ex:R^6}.

\begin{example}\label{ex:R^7}
Let $Y$ = $\R^7$. Denote the standard coordinate by $(x^0,x^1,y^1,x^2,y^2,x^3,y^3)$ and fix the orientation by $dx^0dx^1dy^1dx^2dy^2dx^3dy^3$.  The standard $3$-form is
\begin{align*}
\phi_0&={} -dy^{123}+ dy^1(dx^{01}+dx^{23})+dy^{2}(dx^{02}+dx^{31})+dy^3(dx^{03}+dx^{12}), \\
&={} \om_0 \wedge dx^0 + \psi_0.
\end{align*} 
Also we have
\begin{align*}
\star_{\phi_0}\phi_0&={}dx^{0123}-dy^{23}(dx^{01}+dx^{23})-dy^{31}(dx^{02}+dx^{31}) -dy^{12}(dx^{03}+dx^{12})\\
&={} -\psi_0^{\#}\w dx^0 + \frac{1}{2}\om_0\w\om_0.
\end{align*}
Here $(\om_0,\psi_0)$ is the $\su3$-structure on $\R^6$ in Example \ref{ex:R^6}.
\end{example}

$G_2$-structures satisfying some integrability conditions are studied many authors, including \cite{Br2,CS,Fr}.

\begin{definition} \label{def:tr_G2}Let $\phi$ be a $G_2$-structure on $Y$.
\begin{enumerate}
\item $\phi$ is said to be {\it closed} if $d\phi=0$.

\item $\phi$ is said to be {\it coclosed} if $d\star_{\phi}\phi=0$.

\item $\phi$ is said to be  {\it torsion-free} if $d\phi=0$ and $d(\star_{g_{\phi}}\phi)=0$.

\end{enumerate}
\end{definition}

A $G_2$-structure $\phi$ is torsion-free if and only if the associated Riemannian metric $g_{\phi}$ has the holonomy group contained in $G_2$. If a $G_2$-structure is torsion-free then the associated metric on $Y$ is Ricci-flat. A 7-manifold $Y$ with a torsion-free $G_2$-structure $\phi$ is called a {\it $G_2$-manifold}, and 
a Riemannian metric $g$ on $Y$ is called {\it holonomy $G_2$} if the holonomy group coincides with $G_2$. 

It is useful the following lemma for a normal form of a $G_2$-structure at a point $y\in Y$. Let $\phi$ be a $G_2$-structure on $Y$, and $\{V_1,V_2,V_3\}$ orthonormal tangent vectors at $y \in Y$. Also denote by $\{V^1, V^2,V^3\}$ the dual cotangent vectors with respect to the metric $g_{\phi}$, i.e., $V^i(*)=g_{\phi}(V_i,*)$ for $i=1,2,3$. 
Let $\phi_y$ and $\star_{\phi} \phi_y$ denote 3- and 4-forms at $y\in Y$, respectively.

\begin{lemma}\label{lem:can1}
Suppose $\phi(V_1,V_2,V_3) = 0$ at $y\in Y$. Defining cotangent vectors
\begin{align*}
Z = -(\star_{\phi}\phi)(V_1,V_2,V_3,*) \quad{and}\quad E^i = \frac{1}{2}\sum_{j,k}\epsilon_{ijk}\iota(V_k)\left(\iota(V_j)\phi\right)
\end{align*}
for $i=1,2,3$, we have 
\begin{align*}
&\phi_y = \sum_{k}V^k E^k \wedge Z -(E^{123} - \sum_{k}E^k\hat{V}^{k}), \\
&\star_{\phi} \phi_y = -(V^{123}- \sum_{k}V^k\hat{E}^k)\wedge Z - \sum_{k}\hat{V}^k\hat{E}^k.
\end{align*}
\end{lemma}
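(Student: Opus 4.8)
The plan is to reduce both identities to a single computation in the flat model $(\R^7,\phi_0)$ of Example \ref{ex:R^7}, via naturality together with the transitivity of $G_2$ on the relevant configurations. The two asserted identities are pointwise relations at $y$, and every object occurring in them --- the contractions defining $E^i$ and $Z$, the wedge products, the hat operation $c^i\mapsto\hat c^i$, the $1$-forms $V^i$, and (through $\phi_y$ alone) the metric $g_\phi$, the orientation, and the Hodge star $\star_\phi$ --- is produced from $\phi_y$ and $V_1,V_2,V_3$ by operations that commute with pullback along linear isomorphisms. Hence, picking any linear isomorphism $\rho\colon T_yY\to\R^7$ with $\rho^*\phi_0=\phi_y$ (one exists because $\phi$ is a $G_2$-structure), $\rho$ is automatically an isometry onto $(\R^7,g_{\phi_0})$, it sends $(V_1,V_2,V_3)$ to an orthonormal triple $(W_1,W_2,W_3)$ with $\phi_0(W_1,W_2,W_3)=0$, and it identifies each side of the two identities with the corresponding expression formed from $(\R^7,\phi_0,W_1,W_2,W_3)$. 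So it is enough to prove the lemma in $(\R^7,\phi_0)$ for an arbitrary orthonormal triple with $\phi_0(W_1,W_2,W_3)=0$; call such a triple \emph{admissible}.

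The crucial step is to show that $G_2$ acts transitively on admissible triples. Since $G_2$ is transitive on unit vectors with $\mathrm{Stab}(W_1)\cong\su3$ acting on $W_1^{\perp}\cong\C^3$, and under this identification $\iota(W_1)\phi_0|_{W_1^{\perp}}$ is a nonzero multiple of the form $\omega_0$ of Example \ref{ex:R^6}, for $W_2,W_3\in W_1^{\perp}$ we get $\phi_0(W_1,W_2,W_3)=c\,\omega_0(W_2,W_3)$ with $c\neq0$; thus admissibility means $\omega_0(W_2,W_3)=0$, i.e.\ $W_3\perp\C W_2$. Now $\su3$ is transitive on unit vectors of $\C^3$, with $\mathrm{Stab}(W_2)\cong\su2$ acting on $(\C W_2)^{\perp}\cong\C^2$, and $\su2$ is transitive on $S^3\subset\C^2$; composing these transitivities carries any admissible triple to the standard one $(\partial_{x^1},\partial_{x^2},\partial_{x^3})$ by an element of $G_2$. (Alternatively, $\phi_0(W_1,W_2,W_3)=0$ says $W_3$ is orthogonal to the cross product $W_1\times W_2$, so $\mathrm{span}(W_1,W_2,W_1\times W_2)$ is an associative $3$-plane, and one invokes the transitivity of $G_2$ on oriented orthonormal frames of associative $3$-planes and the induced transitive $SU(2)$-action on the unit sphere of the coassociative complement.) As $G_2$ fixes $\phi_0$, $g_{\phi_0}$, $\star_{\phi_0}$ and the orientation, it now suffices to prove the two identities for the standard triple.

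For $V_i=\partial_{x^i}$ the verification is a direct computation: a short contraction gives $E^1=dy^1$ and $Z=dx^0$, the cases $i=2,3$ follow from the cyclic symmetry $1\mapsto2\mapsto3\mapsto1$ of $\phi_0$, and evidently $V^i=dx^i$; substituting these into the right-hand sides, and using $\tfrac12\omega_0\wedge\omega_0=-\sum_k\widehat{dx}^k\wedge\widehat{dy}^k$, recovers precisely $\phi_0=\omega_0\wedge dx^0+\psi_0$ and $\star_{\phi_0}\phi_0=-\psi_0^{\#}\wedge dx^0+\tfrac12\omega_0\wedge\omega_0$ from Examples \ref{ex:R^6} and \ref{ex:R^7}. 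The one real difficulty in the argument is the transitivity statement: although classical, it must be handled carefully with respect to orientations so that the \emph{ordered} standard triple is genuinely reachable; the remaining steps are naturality bookkeeping and a routine flat-space calculation.
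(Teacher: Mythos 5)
Your proof is correct and follows essentially the same route as the paper: reduce to the flat model $(\R^7,\phi_0)$, invoke the transitivity of $G_2$ on orthonormal triples with $\phi_0(v_1,v_2,v_3)=0$, and verify the identities directly for the standard triple $(\partial_{x^1},\partial_{x^2},\partial_{x^3})$. The only difference is that the paper simply cites this transitivity from Harvey--Lawson (Proposition~1.10), whereas you supply the standard $G_2\supset\su3\supset\su2$ sphere-transitivity argument for it, which is a correct (and slightly more self-contained) way to justify that step.
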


\begin{proof}
We can directly check this for $V_1=\dfrac{\partial}{\partial x^1}, V_2=\dfrac{\partial}{\partial x^2}$ and $V_3=\dfrac{\partial}{\partial x^3}$ in the case of $\phi_0$ in Example \ref{ex:R^7}. This suffices to prove Lemma \ref{lem:can1} since it is known that the Lie group $G_2$ acts transitively (and faithfully) on the set of triple vectors $\{(v_1,v_2,v_3)\}$ of $\R^7$ satisfying $\phi_0(v_1,v_2,v_3)=0$ (\cite{HL}, p.~115, Proposition 1.10).
\end{proof}

\subsection{Relations between $\su3$-structures and $G_2$-structures}
$G_2$-structures are related to $\su3$-structures as stated below (See \cite{Br3,Hi} for more details). These propositions can be proved by direct calculation. Let $X$ be a 6-manifold, and consider a one-parameter family $(\om(t),\psi(t))$ of $\su3$-structures on $X$ defined on an interval $(t_1,t_2)$. 

\begin{prop}\label{prop:sug2}
The $3$-form $\phi=\om(t)\w dt + \psi(t)$ on the 7-manifold $X \times (t_1,t_2)$ is a $G_2$-structure. Also the Hodge dual is $\star_{\phi}\phi=-\psi(t)^{\#}\w dt + \frac{1}{2}\om(t)\w\om(t)$. 
\end{prop}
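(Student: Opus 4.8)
The plan is to prove Proposition \ref{prop:sug2} by reducing everything to the linear-algebra model on $\R^7$ of Example \ref{ex:R^7} and then invoking naturality. First I would observe that the statement is pointwise: at each point $(x,t)\in X\times(t_1,t_2)$ the 3-form $\phi$ is built algebraically from the pair $(\om(t)_x,\psi(t)_x)$, which by definition of an $\su3$-structure is $\gl6$-equivalent to the standard pair $(\om_0,\psi_0)$ on $\R^6\cong\C^3$. Pulling back through this frame identification, and using the direct-sum decomposition $T_{(x,t)}(X\times(t_1,t_2))\cong\R^6\oplus\R$ with $dt$ spanning the second factor, the 3-form $\phi$ is identified with $\om_0\w dx^0+\psi_0$, which is exactly $\phi_0$ by the second displayed line of Example \ref{ex:R^7}. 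Since $\phi_0$ is by definition a definite 3-form (its stabilizer is $G_2$), and definiteness is an open $\gl7$-invariant condition preserved under the identification, $\phi$ is a $G_2$-structure on $X\times(t_1,t_2)$.

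For the Hodge dual, the key point is that $\star_\phi$ is determined by $\phi$ alone (it depends on $\phi$ through the metric $g_\phi$ and orientation it induces), so it suffices to compute it in the normal-form model. I would carry this out by noting that the frame identification above is an isometry between $g_\phi$ and the standard metric on $\R^7$ whose $\star$ was recorded in Example \ref{ex:R^7}: there one has $\star_{\phi_0}\phi_0=-\psi_0^{\#}\w dx^0+\frac12\om_0\w\om_0$. Transporting this back through the identification — using that $\psi^{\#}$ is defined, just like $\psi$, as the form corresponding to $\re(dz^1dz^2dz^3)$ under the same pointwise $\su3$-frame, and that $dx^0$ corresponds to $dt$ — yields $\star_\phi\phi=-\psi(t)^{\#}\w dt+\frac12\om(t)\w\om(t)$ at every point, which is the claimed formula.

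The only genuine subtlety — the place I expect to spend the most care — is bookkeeping the identifications consistently: one must use \emph{the same} frame isomorphism $\R^6\xrightarrow{\sim}T_xX$ to read off $\om$, $\psi$ and $\psi^{\#}$ (this is exactly why $\psi^{\#}$ was defined intrinsically from the $\su3$-structure in the previous subsection rather than ad hoc), and one must check that this $\su3$-frame, extended by $dt\mapsto dx^0$, is orientation- and metric-compatible for the $G_2$-data on the 7-manifold. Once that compatibility is in place, both assertions follow from the two computations already displayed in Example \ref{ex:R^7}, with no further calculation; indeed the excerpt itself remarks that the proposition can be verified by direct calculation, and the role of Example \ref{ex:R^7} is precisely to have done that calculation in the model. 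I would therefore present the proof as: (i) reduce to a point via $\su3$-frames; (ii) identify $\phi$ with $\phi_0$ and conclude it is a definite 3-form; (iii) identify the induced metric and orientation with the standard ones and read off $\star_\phi\phi$ from Example \ref{ex:R^7}.
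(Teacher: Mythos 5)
Your proposal is correct and follows essentially the same route as the paper, which offers no written proof beyond the remark that the proposition follows by direct calculation, the relevant computation being exactly the model identities $\phi_0=\om_0\w dx^0+\psi_0$ and $\star_{\phi_0}\phi_0=-\psi_0^{\#}\w dx^0+\frac{1}{2}\om_0\w\om_0$ recorded in Example \ref{ex:R^7}. Your reduction to a point via an $\su3$-frame (well defined because all transported objects are $\su3$-invariant) and the care about using the same frame for $\om$, $\psi$ and $\psi^{\#}$ is precisely the bookkeeping the paper leaves implicit.
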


 Let $\phi$ be the $G_2$-structure on $X\times (t_1,t_2)$ in Proposition \ref{prop:sug2}. The torsion-free condition for $\phi$ is interpreted as the following constrained dynamical system on the space of $\su3$-structures on $X$.

\begin{prop}\label{prop:flow}
The $G_2$-structure $\phi$ is torsion-free if and only if $(\om(t),\psi(t))$ satisfies the following four equations at every $t\in(t_1,t_2)$: 
\begin{enumerate}
    \item constraint conditions
    \begin{align}d\left(\omega\wedge\omega\right)= 0 \quad\text{and}\quad d\psi = 0;\no
    \end{align}
    
    \item equations of motion 
    \begin{align}
\frac{\partial \psi}{\partial t}= d\om \quad\text{and}\quad \frac{\partial }{\partial t}\left(\frac{1}{2}\om\we\om\right)= d\psi^{\#}.\no
\end{align}
\end{enumerate}
\end{prop}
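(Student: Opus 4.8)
The plan is to compute $d\phi$ and $d(\star_\phi\phi)$ directly on the cylinder $X\times(t_1,t_2)$ by separating off the $dt$-direction, and then to read off the four equations by matching components. Recall that torsion-freeness of $\phi$ means precisely $d\phi=0$ and $d(\star_\phi\phi)=0$.

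First I would record the following elementary fact about the cylinder: every form $\eta$ on $X\times(t_1,t_2)$ decomposes uniquely as $\eta=\alpha+\beta\w dt$ with $\alpha,\beta$ containing no $dt$ (i.e.\ $t$-families of forms on $X$), and then, writing $d_X$ for the exterior derivative along $X$ at fixed $t$,
\[
d\eta=d_X\alpha+\bigl((-1)^{\deg\alpha}\,\partial_t\alpha+d_X\beta\bigr)\w dt,
\]
using that $\partial_t$ and $d_X$ commute and that $d(dt)=0$. Consequently $\eta$ is closed if and only if $d_X\alpha=0$ and $(-1)^{\deg\alpha}\partial_t\alpha+d_X\beta=0$.

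Next I would apply this to $\phi=\psi(t)+\om(t)\w dt$, with $\alpha=\psi$ of degree $3$ and $\beta=\om$: closedness of $\phi$ becomes $d_X\psi=0$ together with $\partial_t\psi=d_X\om$, which are the first constraint condition $d\psi=0$ and the first equation of motion $\partial_t\psi=d\om$. Then, using Proposition \ref{prop:sug2} to write $\star_\phi\phi=\tfrac12\om(t)\w\om(t)-\psi(t)^{\#}\w dt$, I would apply the same fact with $\alpha=\tfrac12\om\w\om$ of degree $4$ and $\beta=-\psi^{\#}$: closedness of $\star_\phi\phi$ becomes $d_X\!\bigl(\tfrac12\om\w\om\bigr)=0$ together with $\partial_t\!\bigl(\tfrac12\om\w\om\bigr)=d_X\psi^{\#}$, which are the remaining constraint condition $d(\om\w\om)=0$ and the remaining equation of motion $\partial_t(\tfrac12\om\w\om)=d\psi^{\#}$. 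Combining the two computations yields the stated equivalence.

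I do not expect a genuine obstacle: the argument is a bookkeeping exercise once Proposition \ref{prop:sug2} is in hand. The only points to watch are the signs in the decomposition of $d$ (the factor $(-1)^{\deg\alpha}$, which is why $\phi$ contributes $\partial_t\psi=d\om$ while $\star_\phi\phi$ contributes $\partial_t(\tfrac12\om\w\om)=d\psi^{\#}$ with no sign), and the legitimacy of treating $\star_\phi\phi$ and $\psi^{\#}$ as $t$-families of forms on $X$ — which is clear because the formula in Proposition \ref{prop:sug2} and the pointwise algebraic definition of $\psi^{\#}$ involve no $dt$.
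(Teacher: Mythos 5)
Your proposal is correct and is exactly the ``direct calculation'' the paper invokes for Proposition \ref{prop:flow}: decompose $d$ on the cylinder as $d_X+dt\wedge\partial_t$, apply it to $\phi$ and to the expression for $\star_\phi\phi$ from Proposition \ref{prop:sug2}, and match the components with and without $dt$. The sign bookkeeping ($-\partial_t\psi+d\om=0$ from the degree-$3$ part and $+\partial_t(\tfrac12\om\wedge\om)-d\psi^{\#}=0$ from the degree-$4$ part) comes out as you state, so there is nothing to add.
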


\begin{remark}
It is clear that solutions of the equations of motion preserve the half-flat conditions (constraint conditions).
\end{remark}


\section{$G$-invariant non-integrable special Lagrangian fibrations}\label{sec:slag}

Let $G$ be a connected $3$-dimensional Lie group with Lie algebra $\mathfrak{g}$, and $P$ the total space of a principal $G$-bundle $\pi:P \to M$ over a 3-manifold $M$. In this section we introduce 
our main objects, {\it $G$-invariant non-integrable special Lagrangian fibered $\su3$-structures} on $P$. Next, we present examples of such $\su3$-structures. Finally, we prove that if $G$ is unimodular then every such $\su3$-structure is uniquely constructed by a triple $(e,a,S)$ of a solder 1-form $e$, a connection 1-form $a$ and an equivariant $\psym3$-valued function $S$ on $P$. Here denote by $\sym3$ and $\psym3$ the spaces of $3\times3$ symmetric and positive-definite symmetric matrices.

\subsection{Definition}

Let us start with the definition.

\begin{definition}\label{def:3a}
An $\su3$-structure $(\omega,\psi)$ on $P$ is said to be  a {\it $G$-invariant non-integrable special Lagrangian fibered $\su3$-structure} if it satisfies:
\begin{enumerate}
\item $\omega$ and $\psi$ are invariant for the right action of $G$ on $P$;

\item the restrictions of $\omega$ and $\psi$ to the fibers $F_m \subset P$ vanish: $\omega|_{F_m}=0$ and $\psi|_{F_m}=0$ for every $m \in M$.
\end{enumerate}
\end{definition}

Hereafter, we simply refer to such $\su3$-structures as {\it $G$-invariant sLag $\su3$-structures}.

Next we present some examples. Let $V$ be a vector space with a representation $\rho: G \to \rm{GL}(V)$. Then denote by $\Om^p(P;V)^{G}$ and $\Om^p(P;V)^{G}_{hor}$ the space of $G$-equivariant $V$-valued $p$-forms and horizontal $p$-forms on $P$.

\begin{example}\label{ex:t31}
Let $G=\T3$. Fix a basis $\{X_1,X_2,X_3\}$ of $\mathfrak{t}^3$. By this basis, $\mathfrak{t}^3\cong\R^3$ and $\mathrm{Ad}(g)=I\in \gl{3}$ for every $g\in \T3$. Let $\T3$ act on $\psym3$ trivially. Given a triple $(e,a,S)$ of a solder 1-form $e=e^k X_k \in \Om^1(P;\mathfrak{t}^3)^{\T3}_{hor}$, a connection 1-form $a=a^k X_k \in \Om^1(P;\mathfrak{t}^3)^{\T3}$ and an equivariant $\psym3$-valued function $S=(S_{ij}) \in \Om^0(P;\psym3)^{\T3}$, we can see
the following 2-form $\om$ and 3-form $\psi$ 
\begin{align}
 \omega &= (\det S)^{-\frac{1}{2}}\sum_{i,j}\tilde{S}_{ij}a^ie^j, \label{eq:su3a}\\
 \psi &= -(\det{S})e^{123} + \sum_{k}e^k\hat{a}^k\label{eq:su3b}
\end{align}
is a $\T3$-invariant sLag $\su3$-structure on $P$. Here, by definition, $\Om^1(P;\mathfrak{t}^3)^{\T3}_{hor}=\Om^1(M;\R^3)$ and\\
$\Om^0(P;\psym3)^{\T3}=\Om^0(M;\psym3)$.
\end{example}

We can generalize this examples as follows.

\begin{example}\label{ex:so3}
Let $G$ be unimodular. A Lie group $G$ is called {\it unimodular} if $\det{(\mathrm{Ad}(g))}=\pm{1}$ for every $g\in G$. Fix a basis $\{X_1,X_2,X_3\}$ of $\fg$. By this basis, $\fg\cong\R^3$ and $\mathrm{Ad}(g)\in \gl3$. Let $G$ act on $\psym3$ by $g\cdot S =\mathrm{Ad}(g)\cdot S\cdot{}^t\mathrm{Ad}(g)$ for $g\in G$ and $S\in \psym3$, where $^t\mathrm{Ad}(g)$ is the transverse matrix of $\mathrm{Ad}(g)$.  Given a triple $(e,a,S)$ of a solder 1-form $e=e^kX_k \in \Om^1(P;\fg)^{G}_{hor}$, a connection 1-form $a=a^kX_k\in \Om^1(P;\fg)^G$ and an equivariant $\psym3$-valued function $S=(S_{ij}) \in \Om^0(P;\psym3)^G$, we can see that, in the same way as Example \ref{ex:t31}, the 2-form $\om$ and 3-form $\psi$ defined by (\ref{eq:su3a}) and (\ref{eq:su3b}) 
is a $G$-invariant sLag $\su3$-structure on $P$. See also the proof of Theorem \ref{thm:slag1}.
\end{example}

\subsection{Decomposition} 

Next let us start with a $G$-invariant sLag $\su3$-structure $(\om,\psi)$ on $P$ and decompose it into a triple $(e,a,S)$. Fix a basis $\{X_1,X_2,X_3\}$ of $\fg$ and denote by $A^*$ the infinitesimal vector field on $P$ for $A\in \fg$. 

First, since the Riemannian metric $h_{(\om,\psi)}$ associated with $(\om,\psi)$ is invariant for the right action of $G$ on $P$, we have a connection 1-form $a=a^k X_k \in \Om^1(P;\fg)^G$ by the orthogonal decomposition of the tangent bundle $TP$.

Second, we define 1-forms $e^k \in \Om^1(P;\R)$ by
 \begin{align*}
 e^k = \frac{1}{2}\sum_{i,j}\epsilon_{ijk}\iota(X_j^*)\left(\iota(X_i^*)\psi\right)
 \end{align*}
for $k=1,2,3.$ Finally, we obtain a $\psym3$-valued function $S$ on $P$ by the following.

  \begin{prop}\label{prop:3a} There exists uniquely a $\psym3$-valued function $S$ on $P$ 
 such that 
 \begin{align*}
 \omega &= (\det S)^{-\frac{1}{2}}\sum_{i,j}\tilde{S}_{ij}a^ie^j, \\
 \psi &= -(\det{S})e^{123} + \sum_{k}e^k\hat{a}^k.
  \end{align*}
\end{prop}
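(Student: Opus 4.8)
The plan is to work in the principal bundle $P$ and use Lemma~\ref{lem:can1} as the local model, exploiting that a $G$-invariant sLag $\su3$-structure is, at each point, modeled on the situation of that lemma with the role of the distinguished ``horizontal'' directions played by the fiber directions. First I would fix $u\in P$ and set $V_i = (X_i^*)_u$, the values at $u$ of the fundamental vector fields; these span the vertical tangent space $T_u^vP$, and by hypothesis $\psi|_{F_m}=0$ forces $\psi(V_1,V_2,V_3)=0$. Thus we may apply Lemma~\ref{lem:can1} (with $\phi$ there replaced by $\psi$ here, noting the excerpt's convention that this is the ``$\im$''-part form while $\psi^\#$ is the ``$\re$''-part — one has to check the lemma is stated for exactly the normal form we need, which it is since $\phi_0 = \om_0\we dx^0 + \psi_0$ pairs $\psi_0$ with the $E^i$'s). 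This produces cotangent vectors $Z$ and $E^i$ at $u$; I would identify $E^i$ with the value of the 1-form $e^i$ defined just above the Proposition (the formula $e^k = \tfrac12\epsilon_{ijk}\iota(X_j^*)(\iota(X_i^*)\psi)$ is literally the definition of $E^i$ in the lemma with $V_i = X_i^*$), and identify $V^i$, the metric-dual of $X_i^*$, with the connection form components: since $a$ is the connection from the orthogonal splitting, $a^i$ restricted to the vertical space is dual to $X_i^*$, so $V^i$ and $a^i$ agree on $T_u^v P$, and on horizontal vectors both the lemma's $V^i$ (being metric-dual to a vertical vector) and $a^i$ vanish.

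Next I would rescale. Lemma~\ref{lem:can1} gives $\psi_u = \sum_k V^k E^k\we Z - (E^{123} - \sum_k E^k\hat V^k)$ and a matching expression for $\star\psi$... but the desired normal form in the Proposition has the pure ``vertical'' term appearing as $-(\det S)\,e^{123}$ rather than $-E^{123}$, and the cross term as $e^k\hat a^k$ rather than $-E^k\hat V^k$. Comparing, the 1-forms $e^i$ produced from $\psi$ are not yet the solder forms; one must absorb a positive scalar. Concretely, write the lemma's $E^i$ in terms of a basis of horizontal covectors: since $e^i$ is horizontal (it kills the vertical space because it's built from interior products that saturate $\iota(X_j^*)\iota(X_i^*)$ and $\psi|_{\text{fiber}}=0$ — wait, more carefully, $e^i$ need not be horizontal a priori, but its vertical part is governed by $\psi$ evaluated on two verticals and one vector, which is controlled), and $e^{123}$ is a nonzero multiple of the pullback of a volume form on $M$. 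I would set things up so that $e^i := (\det S)^{?}E^i$ for the appropriate power, where $S$ is to be determined; the cleanest route is to define the symmetric matrix $S$ directly from $\om$ and the $e^i$, then verify the formulas.

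So the core of the argument: define $e^i$ as in the Proposition's preamble, show $\{e^1,e^2,e^3\}$ together with $\{a^1,a^2,a^3\}$ form a coframe on $P$ (the $e^i$ span the annihilator of the vertical space and $e^{123}\neq 0$, i.e.\ $e$ is a solder form — this uses that $G_2$ acts transitively on null triples, Lemma~\ref{lem:can1}'s justification), and then expand $\om$ in this coframe. Because $\om|_{F_m}=0$, $\om$ has no $a^i\we a^j$ term; because the $\su3$ compatibility $\om\we\psi = 0$ and the normalization $\tfrac16\om^3 = \tfrac14 \psi\we\psi^\#$ must hold, the $e^i\we e^j$ part of $\om$ is also killed (this needs a short check using the explicit shape of $\psi$). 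Hence $\om = \sum_{i,j} M_{ij}\, a^i\we e^j$ for some matrix-valued function $M$ on $P$. Feeding $\om$ and $\psi$ into the two $\su3$ structure equations $\om\we\psi=0$ and $\om^3 = \tfrac32\psi\we\psi^\#$ (equivalently using Lemma~\ref{lem:can1}'s $\star\psi$ formula and $\om^2/2 = $ the $\star\psi$ contribution), one derives that $M$ must be symmetric, positive-definite, and that the scalar relating $e^i$ to the lemma's $E^i$ is exactly $(\det M)$ to the power dictated by matching $-E^{123}$ with $-(\det S)e^{123}$; one then sets $S := M$ and checks $\tilde S = (\det S)S^{-1}$ makes \eqref{eq:su3a} consistent. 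Uniqueness is then automatic: $e$ and $a$ are determined by $(\om,\psi)$ and the metric (hence by $(\om,\psi)$ alone, since the metric is determined by an $\su3$-structure), and $S$ is recovered as $S_{ij} = \om(X_i^*\lrcorner\,\cdot\,, \cdot)$ paired appropriately against the $e^j$, i.e.\ by contracting $\om$ with the frame dual to $(a,e)$.

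The main obstacle I anticipate is pinning down the exact power of $\det S$ and the precise normalization constants: Lemma~\ref{lem:can1} delivers $\psi_u$ and $\star\psi_u$ in a rigid normal form with coefficient $1$, whereas the target formulas carry $\det S$ and $(\det S)^{-1/2}$, so one must carefully track how a $\gl3$-change of the horizontal coframe (from $E^i$ to $e^i$) rescales $E^{123}$, $\hat E^i$, and the $\om$-coefficients, and then solve for the unique rescaling making everything match the stated formulas. This is ``just'' multilinear bookkeeping with the Levi-Civita symbol and the adjugate identity $\tilde S S = (\det S)I$, but it is where all the content of the normalization lives; I would handle it by evaluating at a point $u$ where $(a,e)$ has been put in standard form and invoking $G_2$-transitivity on null triples (as in the proof of Lemma~\ref{lem:can1}) to reduce to the model $\R^7$ computation of Example~\ref{ex:R^7}. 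A secondary point to verify is positive-definiteness of $S$: this should follow because $S$ is built from the Riemannian metric $h_{(\om,\psi)}$ restricted to the vertical distribution, expressed in the basis $\{X_i^*\}$, and a metric restricted to a subspace is positive-definite.
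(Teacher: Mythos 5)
Your overall strategy---reduce to the normal form of Lemma \ref{lem:can1} via $G_2$-transitivity on null triples---is the paper's strategy too, but there is a genuine gap at the point where you invoke the lemma. Lemma \ref{lem:can1} requires the vectors $V_1,V_2,V_3$ to be \emph{orthonormal}, and you take $V_i=(X_i^*)_u$, the fundamental vector fields, which are not orthonormal in general; indeed the whole content of the matrix $S$ is their failure to be so (one finds $h(X_i^*,X_j^*)=(S^{-1})_{ij}$). Because of this, your identification ``$V^i$ and $a^i$ agree on $T^v_uP$'' is false: the metric dual of $X_i^*$ restricted to the vertical space is $\sum_j(S^{-1})_{ij}a^j$, not $a^i$ (the connection form is dual to $X_i^*$ in the pairing sense $a^i(X_j^*)=\delta_{ij}$, which is a different notion of duality unless the frame is orthonormal). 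You then describe the mismatch between the lemma's $E^i$ and the solder forms $e^i$ as ``a positive scalar'' to be absorbed, but it is a full $\gl3$ change of frame. The paper handles this by choosing $G$-invariant \emph{orthonormal} vertical fields $V_k=\sum_jQ_{jk}X_j^*$, applying Lemma \ref{lem:can1} to the $G_2$-structure $\phi=\om\we dt+\psi$ on $P\times\R$ (note the lemma is a statement about a definite 3-form on a 7-manifold, not about $\psi$ alone --- this packaging is also how $\om$ enters the normal form), and then transporting the answer through the identities $E^k=\sum_j\tilde Q_{kj}e^j$, $V^k=\sum_jQ^{kj}a^j$, $\hat E^k=\det Q\sum_jQ_{jk}\hat e^j$; the matrix $S:=Q\,{}^tQ$ and every power of $\det S$ in the statement fall out of exactly this change of basis, and positive-definiteness and independence of the choice of $\{V_k\}$ are immediate from $S=Q\,{}^tQ$.

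Your fallback route --- expand $\om=\sum_{i,j}M_{ij}\,a^i\we e^j$ in the coframe $(a,e)$ and force symmetry, positivity, and the normalization from the algebraic $\su3$ identities $\om\we\psi=0$ and the volume compatibility --- is a legitimate alternative in outline, but as written all of its content (why the $e^ie^j$ part of $\om$ vanishes, why $M$ is symmetric and positive-definite, and which power of $\det M$ appears) is deferred to ``a short check'' and ``bookkeeping,'' and your proposed way of doing that bookkeeping is again to put $(a,e)$ in ``standard form'' at a point, which is precisely the orthonormal-frame reduction you skipped. To repair the argument with minimal change, introduce the orthonormal vertical frame and the matrix $Q$ first, and only then apply Lemma \ref{lem:can1}.
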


\begin{proof}
Let $U$ be an open neighborhood of any point $m \in M$ such that $\pi^{-1}(U)\cong U \times G$. Denote by $TF$ the fiberwise tangent bundle $\ker{(d\pi)}$ on $P$. Let us consider the restriction $TF|_{\pi^{-1}(U)}$ of $TF$ to $\pi^{-1}(U) \subset P$. Since $\pi^{-1}(U)$ is trivial as a principal $G$-bundle over $U$, we have $G$-invariant vector fields $\{V_1,V_2,V_3\}$ on $\pi^{-1}(U)$ that are orthonormal with respect to the metric $h_{(\om,\psi)}$ and contained in $TF|_{\pi^{-1}(U)}$. Using this vector fields and the infinitesimal vector fields $\{X_1^*,X_2^*,X_3^*\}$, we define a $\gl3$-valued function $Q=(Q_{ij})$ on $\pi^{-1}(U)$ by 
\begin{align*}
(V_k)_u = \sum_{j}Q_{jk}(X_j^*)_u
\end{align*}
for $k=1,2,3$, and at each point $u \in \pi^{-1}(U)$. Further, put $Q^{-1}=(Q^{ij})$ and define the dual 1-forms $\{V^1,V^2,V^3\}$ by 
$V^i(*)=h(V_i,*)$. In this notation, we have 
\begin{align}
V^k = \sum_{j}Q^{kj}a^j\label{eq:3d}
\end{align}
for $k=1,2,3$.

In what follows, we use the following identities for the cofactor matrix $\tilde{Q}=(\tilde{Q}_{ij})$ of $Q$. These identities hold for general regular $3\times 3$ matrices.
\begin{eqnarray*}
\tilde{Q}_{ij} &=& \frac{1}{2}\sum_{k,l,\alpha,\beta}\epsilon_{i\alpha\beta}\epsilon_{jkl}Q_{k\alpha}Q_{l\beta} ,\\
\tilde{Q}& =& \det{Q}\cdot Q^{-1}, \quad \det{\tilde{Q}}= \det{Q^2}.
\end{eqnarray*}

In order to apply Lemma \ref{lem:can1} for normal forms, let us consider the 3-form $\phi=\om\w dt +\psi$ on $P\times \R$. The 3-form $\phi$ is a $G_2$-structure by Proposition \ref{prop:sug2}. Since the orthonormal vector fields $\{V_1,V_2,V_3\}$ satisfy $\phi(V_1,V_2,V_3)=0$, we can apply Lemma \ref{lem:can1} to $\phi$. Using the same notation $E^k$ and $Z$ in Lemma \ref{lem:can1} and the above identities for the cofactor matrix $\tilde{Q}$, we have 
\begin{align}
E^k &= \frac{1}{2}\sum_{i,j}\epsilon_{ijk}\iota(V_j)\left(\iota(V_i)\phi\right) 
= \frac{1}{2}\sum_{i,j}\epsilon_{ijk}\iota(V_j)\left(\iota(V_i)\psi\right) \label{eq:3e}\\
&= \sum_{j}\tilde{Q}_{kj}e^j, \no\\
\hat{E}^k &= \det{Q} \cdot \sum_{j}Q_{jk}\hat{e}^j ,\quad \hat{V}^k = \sum_{j}\tilde{Q}^{jk}\hat{e}^j\label{eq:3f}
\end{align}
for $k=1,2,3$, where $\tilde{Q}^{-1} = (\tilde{Q}^{ij})$. Moreover, by (\ref{eq:3d}), (\ref{eq:3e}) and (\ref{eq:3f}), we have 
\begin{align*}
&\sum_{k}V^kE^k = \det{Q}\cdot({}^tQ^{-1}Q^{-1})_{ij}a^ie^j ,\\
&-E^{123} + \sum_{k}V^k\hat{E}^k= -\det{Q}^2e^{123} + \sum_{k}e^k\hat{a}^k ,\\
&V^{123} - \sum_{k}V^k\hat{E}^k = \det{Q^{-1}}a^{123} - \det{Q}\sum_{k}a^k\hat{e}^k ,\\
&Z = dt .
\end{align*}
Define $S$ by $Q \cdot^{t}Q$, where $^tQ$ is the transverse matrix of $Q$. Then, by these identities, we have consequently
 \begin{align*}
 \omega &= (\det S)^{-\frac{1}{2}}\sum_{i,j}\tilde{S}_{ij}a^ie^j, \\
 \psi &= -(\det{S})e^{123} + \sum_{k}e^k\hat{a}^k.
  \end{align*} Thus the function $S$ is our goal. Further by the definition of $S$, we can see that $S$ is independent of the choice of the orthonormal vector fields $\{V_1,V_2,V_3\}$. 
\end{proof}

By Example \ref{ex:so3} and Proposition \ref{prop:3a}, we obtain

\begin{theorem}\label{thm:slag1}
Let $G$ be unimodular. Then there exists a one-to-one correspondence between $G$-invariant sLag $\su3$-structures and triples $(e,a,S)$ in Example \ref{ex:so3}.
\end{theorem}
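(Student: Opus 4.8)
\textbf{Proof proposal for Theorem \ref{thm:slag1}.}

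The plan is to show that the two constructions — the one producing a $G$-invariant sLag $\su3$-structure from a triple $(e,a,S)$ (Example \ref{ex:so3}) and the one producing a triple from such an $\su3$-structure (the construction of $a$ via the metric splitting, the $e^k$ via the explicit contraction formula, and $S$ via Proposition \ref{prop:3a}) — are mutually inverse. One direction is essentially done: Proposition \ref{prop:3a} shows that starting from $(\om,\psi)$ the assembled triple $(e,a,S)$ reproduces $(\om,\psi)$ via formulas \eqref{eq:su3a}--\eqref{eq:su3b}, and Example \ref{ex:so3} asserts (referring forward to this very proof) that these formulas do yield a $G$-invariant sLag $\su3$-structure. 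So the substance of the argument splits into three tasks: (i) verify that the forms $\om,\psi$ defined by \eqref{eq:su3a}--\eqref{eq:su3b} from an arbitrary admissible triple are genuinely an $\su3$-structure — i.e. pointwise $\mathrm{GL}$-equivalent to the model $(\om_0,\psi_0)$ — and are $G$-invariant with vanishing restriction to the fibers; (ii) check that feeding this $(\om,\psi)$ back through the decomposition procedure returns the \emph{same} triple $(e,a,S)$; and (iii) confirm that all three objects $e,a,S$ have the equivariance and horizontality properties claimed (solder $1$-form, connection $1$-form, equivariant $\psym3$-valued function), so that the correspondence is between the stated sets.

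For (i), I would argue pointwise. Fix $u\in P$; choose a linear identification of $T_uP$ under which the vertical subspace is spanned by $(X_1^*)_u,(X_2^*)_u,(X_3^*)_u$ and the $e^k$ span a complementary (horizontal) space, so $\{e^1,e^2,e^3,a^1,a^2,a^3\}$ is a coframe at $u$; this is legitimate because $e$ is a solder $1$-form, hence $e^{123}\neq 0$, and $a$ is a connection, hence the $a^k$ restrict to a basis of the vertical cotangent space. Since $S$ is positive-definite symmetric it factors as $S=Q\,{}^tQ$ for some $Q\in\gl3$; substituting $a=Q\,a'$, $e={}^tQ^{-1}e'$ (a linear change of coframe) into \eqref{eq:su3a}--\eqref{eq:su3b} should turn them into exactly the normal forms $\sum_k V^kE^k$, $-E^{123}+\sum_k e'^k\hat a'^k$ appearing after Lemma \ref{lem:can1} — this is the same computation run backwards as in the proof of Proposition \ref{prop:3a}. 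Because $G_2$ acts transitively on triples of vectors annihilated by $\phi_0$, and the $3$-form $\phi_0=\om_0\w dx^0+\psi_0$ sits in this normal form (Example \ref{ex:R^7}), it follows that $\om\w dt+\psi$ is pointwise a definite $3$-form of the split type, hence by Proposition \ref{prop:sug2} (run in reverse — the split of a definite $3$-form adapted to a hyperplane into $(\om,\psi)$) that $(\om,\psi)$ is an $\su3$-structure. Invariance of $\om,\psi$ follows since $e,a,S$ are built from $G$-equivariant data and the defining formulas are $\mathrm{Ad}$-natural (here the choice of the $\psym3$-action $g\cdot S=\mathrm{Ad}(g)S\,{}^t\mathrm{Ad}(g)$ is exactly what makes $\tilde S_{ij}a^ie^j$ and $(\det S)e^{123}$ invariant, using unimodularity to handle the $\det$ factors); vanishing on the fibers is visible from the formulas since every monomial in $\om$ contains one $e^j$ and every monomial in $\psi$ contains at least one $e^k$, while $e$ is horizontal.

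For (ii), I would check each component. The connection recovered from $(\om,\psi)$ is the one splitting $TP$ orthogonally for $h_{(\om,\psi)}$; one computes $h_{(\om,\psi)}$ from the normal form of Lemma \ref{lem:can1} and finds that the $e^k$ are orthogonal to the vertical space and the vertical metric is read off from $S$, so the orthogonal complement of $TF$ is precisely $\ker(e^1)\cap\ker(e^2)\cap\ker(e^3)$, which is the horizontal space of the original $a$ — hence the recovered connection equals $a$. That the $e^k$ are recovered is a direct contraction check: plugging $\psi=-(\det S)e^{123}+\sum_k e^k\hat a^k$ into $\frac12\epsilon_{ijk}\iota(X_j^*)\iota(X_i^*)\psi$ and using $\iota(X_i^*)e^j=0$, $\iota(X_i^*)a^j=\delta_{ij}$ gives back $e^k$. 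Finally Proposition \ref{prop:3a} guarantees that the $S$ reconstructed from this $(\om,\psi)$ is the unique matrix making \eqref{eq:su3a}--\eqref{eq:su3b} hold, and the original $S$ already does, so by uniqueness they coincide. Task (iii) is routine bookkeeping with the definitions of solder and connection $1$-forms and the equivariance conventions.

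The main obstacle I anticipate is task (i): pinning down that the forms \eqref{eq:su3a}--\eqref{eq:su3b} are pointwise in the $\mathrm{GL}$-orbit of $(\om_0,\psi_0)$ — equivalently that $\om$ is nondegenerate of the right type and $\psi$ has stabilizer $\su3$ — without re-deriving the stabilizer theory of $\psi_0$ from scratch. The clean way around it is exactly the device already used in Proposition \ref{prop:3a}: reduce everything to the $G_2$ model via Lemma \ref{lem:can1} and the transitivity result (\cite{HL}, Prop.~1.10), so that the only genuine computation is the substitution $S=Q\,{}^tQ$, $a\mapsto Qa$, $e\mapsto{}^tQ^{-1}e$ bringing \eqref{eq:su3a}--\eqref{eq:su3b} into the Lemma \ref{lem:can1} normal form; the rest is formal. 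I would present the theorem's proof as: ``Example \ref{ex:so3} together with the substitution above shows the forms \eqref{eq:su3a}--\eqref{eq:su3b} define a $G$-invariant sLag $\su3$-structure; Proposition \ref{prop:3a} and the contraction identities for $e^k$ and the metric characterization of $a$ show the two constructions are inverse; hence the correspondence is bijective.''
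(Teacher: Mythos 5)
Your proposal is correct and takes essentially the same route as the paper: the substance in both cases is Proposition \ref{prop:3a} (the factorization $S=Q\,{}^tQ$ and reduction to the $G_2$ normal form of Lemma \ref{lem:can1}) together with the $R_g$-transformation identities and unimodularity for the equivariance of $(e,a,S)$, the paper simply declaring the remaining verifications ``straightforward.'' One small slip to fix in task (ii): the $h_{(\om,\psi)}$-orthogonal complement of $TF$ is $\bigcap_i\ker(a^i)$, not $\bigcap_i\ker(e^i)$ (the latter equals $TF$ itself since $e$ is horizontal with $e^{123}\neq 0$); your own identification of that complement with the horizontal space of $a$ shows this is what you meant.
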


\begin{proof}
Let $(\om,\psi)$ be a $G$-invariant sLag $\su3$-structure on $P$ and $(e,a,S)$ the triple corresponding to $(\om,\psi)$ in Proposition \ref{prop:3a}. Then by the constructions, we have the following:
\begin{align*}
    &R_g^*a^i=g^{ij}a^j,\quad R_g^*e^i=|\mathrm{Ad}(g)|g^{ij}e^j, \\
    &R_g^*\hat{a}^i=|\mathrm{Ad}(g)|^{-1}g_{ji}\hat{a}^j, \quad R_g^*\hat{e}^i=|\mathrm{Ad}(g)|g_{ji}\hat{e}^j,\\
    &R_g^*S=\mathrm{Ad}(g)^{-1}S(^t{\mathrm{Ad}(g)}^{-1}), \quad R_g^*(\det{S})=|\mathrm{Ad}(g)|^{-2}\det{S}, \\ &R_g^*\tilde{S}=|\mathrm{Ad}(g)|^{-2}({}^t\mathrm{Ad}(g))\tilde{S}\mathrm{Ad}(g)
\end{align*}
for $i=1,2,3$ and for every $g\in G$, where $R_g:P\to P$ denotes the right action of $g$ and $\mathrm{Ad}(g)=(g_{ij})$ is the representation of the adjoint action on $\fg$ by the fixed basis $\{X_1,X_2,X_3\}$. Further $|\mathrm{Ad}(g)|$ denotes the determinant. By this, if $G$ is unimodular, then the triple $(e,a,S)$ is a triple satisfying conditions in Example \ref{ex:so3}. Moreover the converse is straightforward by the above identities. 
\end{proof}

In summary, in the cases where $G$ is unimodular, a $G$-invariant sLag $\su3$-structure $(\om,\psi)$ on $P$ uniquely corresponds to a triple $(\{e^k\},\{a^k\},S)$ of a solder 1-form $e$, a connection 1-form $a$ and an equivariant $\psym3$-valued function $S$ on $P$ such that 
\begin{align}
 \omega &= (\det S)^{-\frac{1}{2}}\sum_{i,j}\tilde{S}_{ij}a^ie^j, \label{eq:su3c}\\
 \psi &= -(\det{S})e^{123} + \sum_{k}e^k\hat{a}^k.\label{eq:su3d}
  \end{align}
Moreover, by the proof of Proposition \ref{prop:3a}, we have 
\begin{align}
&\psi^{\#} = (\det{S})^{-\frac{1}{2}}a^{123} - (\det{S})^{\frac{1}{2}}\sum_{k}a^k\hat{e}^k \label{eq:su3e}, \\
&\frac{1}{2}\omega\wedge \omega = - \sum_{i,j}S_{ij}\hat{a}^i\hat{e}^j.\label{eq:su3f}
\end{align}
Then the following 1-forms 
\begin{align*}
   \sum_j Q^{ij}a^j \quad{and}\quad \det(Q)\sum_j Q^{ij}e^j \quad{for}\quad i=1,2,3
\end{align*}
are an orthonormal coframe with respect to the metric $h_{(\om,\psi)}$ associated with $(\om,\psi)$ on $P$. Here $S=Q\cdot {}^tQ$ and $Q^{-1}=(Q^{ij})$ as above.


\section{Reduction of $G$-invariant $G_2$-manifolds}\label{sec:red}

In this section, we prove a generalization of results in Section \ref{sec:slag} to a class of $G$-invariant $G_2$-structures. 
Let $G$ be a 3-dimensional Lie group with Lie algebra $\fg$, and $Q \to N$ a principal $G$-bundle over a 4-manifold $N$. Let us define $G_2$-structures that we consider. 

\begin{definition}\label{def:lagg2}
A $G_2$-structure $\phi$ on $Q$ is said to be a {\it $G$-invariant Lagrangian-type fibered $G_2$-structure} if it satisfies
\begin{enumerate}
    \item $\phi$ is invariant for the right action of $G$ on $Q$,
    \item the restriction of $\phi$ to the fiber $F_n$ vanishes: $\phi|_{F_n}=0$ for all $n\in N$.
    \end{enumerate}
\end{definition}

In what follows, we refer to such $G_2$-structures as {\it $G$-invariant Lag $G_2$-structures}. The following example is typical.

\begin{example}
Let $P\to M$ be a principal $G$-bundle over a 3-manifold $M$, and $(\om(t), \psi(t))$ a one-parameter family of $G$-invariant sLag $\su3$-structures on $P$ defined on an interval $(t_1,t_2)$. Generally, if $f(u,t)$ is any $G$-invariant positive function on $P\times (t_1,t_2)$, then the 3-form $\om(t)\w dt+\psi(t)$ defines a $G$-invariant Lag $G_2$-structure.
\end{example}

Let $\phi$ be a $G$-invariant Lag $G_2$-structure on $Q\to N$, and fix a basis $\{X_1,X_2,X_2\}$ of $\fg$. The following is a generalization of Theorem \ref{thm:slag1} to such $G_2$-structures. 

\begin{prop}\label{prop:red_1}
Suppose $G$ is unimodular and the 1-form $\io(X_3^*)(\io(X_2^*)(\io(X_1^*)\star\phi))$ is closed. Then, for each $n \in N$, there exists a triple of a 3-dimensional submanifold $(n \in) D$ of $N$, a one-parameter family of $G$-invariant sLag $\su3$-structures on $\pi^{-1}(D)$ and a $G$-invariant function $f(u,t)$ on $\pi^{-1}(D)\times (t_1,t_2)$ such that $\phi$ is isomorphic to $\om(t)\w fdt+\psi(t)$ on some neighborhood of $\pi^{-1}(D)$ in $Q$, where $(t_1,t_2)$ denotes an interval on which the one-parameter family is defined.
\end{prop}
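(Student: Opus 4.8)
The plan is to produce the submanifold $D$ as a leaf of a natural $3$-dimensional distribution on $N$, and then pull back the whole setup to $\pi^{-1}(D)$ so that Proposition \ref{prop:sug2} and Theorem \ref{thm:slag1} apply directly. First I would exploit the hypothesis that $\beta := \io(X_3^*)\io(X_2^*)\io(X_1^*)(\star\phi)$ is closed. Since $\beta$ is a nowhere-vanishing closed $1$-form on $Q$ that is basic for the $G$-action (the vector fields $X_i^*$ span the fiber tangent spaces and $G$ is unimodular, so $\beta$ is $G$-invariant and horizontal, hence descends to a closed $1$-form $\bar\beta$ on $N$), its kernel defines an integrable codimension-$1$ distribution on $N$. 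Through the given point $n$ I take $D$ to be the corresponding integral submanifold (a leaf), which is $3$-dimensional; locally $\bar\beta = dt$ for a submersion $t\colon N \to (t_1,t_2)$ with $D = \{t = t(n)\}$, and the level sets $D_s = \{t = s\}$ sweep out a neighborhood of $D$. Pulling $t$ back to $Q$ gives a $G$-invariant function, still called $t$, whose level sets are the principal $G$-bundles $P_s := \pi^{-1}(D_s)$ over the $3$-manifolds $D_s$.

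Next I would restrict $\phi$ and $\star\phi$ to each hypersurface $P_s$. On $P_s$ the $1$-form $dt$ is (up to the positive conformal factor coming from $g_\phi$) the metric-dual of the unit normal, and Lemma \ref{lem:can1}, applied with the $V_i$ taken to be an orthonormal frame of the fiber directions inside $P_s$, gives the normal form of $\phi$ at each point of $P_s$. Comparing with Example \ref{ex:R^7} and Proposition \ref{prop:sug2}, one reads off a $2$-form $\om(s)$ and a $3$-form $\psi(s)$ on $P_s$ with $\phi = \om(s)\w f\,dt + \psi(s)$ along $P_s$, where $f$ is the $G$-invariant positive function $|dt|_{g_\phi}^{-1}$ (equivalently, $f$ is determined by $Z = f\,dt$ in the notation of Lemma \ref{lem:can1}). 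The pair $(\om(s),\psi(s))$ is a $G$-invariant $\su3$-structure on $P_s$ because $G_2 \cap \mathrm{Stab}(dt) \cong \su3$; and it is sLag because the hypothesis $\phi|_{F_n}=0$ forces $\om(s)|_{F}=0$ and $\psi(s)|_{F}=0$ on every fiber $F \subset P_s$ — this is exactly condition (2) of Definition \ref{def:3a}. Identifying a tubular neighborhood of $P_{t(n)}$ in $Q$ with $P_{t(n)} \times (t_1,t_2)$ via the flow of a vector field dual to $dt$ and $G$-invariant, the family $s \mapsto (\om(s),\psi(s))$ becomes a one-parameter family of $G$-invariant sLag $\su3$-structures on the fixed bundle $\pi^{-1}(D) \to D$, and $\phi$ is isomorphic to $\om(t)\w f\,dt + \psi(t)$ on that neighborhood, which is the claimed conclusion. (Theorem \ref{thm:slag1} then further decomposes each $(\om(s),\psi(s))$ into a triple $(e(s),a(s),S(s))$, although that refinement is not needed for the statement as phrased.)

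The main obstacle I anticipate is the bookkeeping that makes the family genuinely \emph{one-parameter on a fixed bundle}: a priori the $P_s$ are different manifolds, so one must choose the identification of the tubular neighborhood with a product carefully and $G$-equivariantly, and check that $\om(s)$, $\psi(s)$ and $f$ transport to smooth $t$-dependent tensors on $\pi^{-1}(D)$ rather than merely pointwise data. The cleanest way is to build a $G$-invariant vector field $T$ on $Q$ (near $P_{t(n)}$) with $dt(T) = 1$ — e.g. the $g_\phi$-gradient of $t$ rescaled, which is automatically $G$-invariant and horizontal-plus-fiber-complement — and use its flow; then smoothness in $t$ is inherited from smoothness of $\phi$, and the $G$-invariance of everything in sight follows from that of $\phi$, $t$ and $T$. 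A secondary point to be careful about is orientation/nondegeneracy: one must verify $dt \neq 0$ everywhere (so the leaves are honest hypersurfaces), which follows because $\beta$ is built from interior products of the nowhere-zero form $\star\phi$ with the frame $X_1^*,X_2^*,X_3^*$ that is pointwise linearly independent along the fibers, so $\beta$ — hence $\bar\beta = dt$ — never vanishes. Everything else is the direct linear algebra already packaged in Lemma \ref{lem:can1}, Proposition \ref{prop:sug2} and Theorem \ref{thm:slag1}.
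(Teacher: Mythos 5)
Your proposal is correct and follows essentially the same route as the paper: use the closed, basic $1$-form $\beta$ to get a local $G$-invariant "time" function whose zero level set contains $\pi^{-1}(D)$, flow along the normalized gradient to identify a neighborhood with $\pi^{-1}(D)\times(-\ep,\ep)$, and read off $\om(t)$, $\psi(t)$ and $f=|\mathrm{grad}|^{-1}$ exactly as in the paper's proof. The only point where your justification is looser than it should be is the nonvanishing of $\beta$: contracting a nonzero $4$-form with three independent vectors can vanish in general (it does on associative $3$-planes), and here one needs the Lagrangian hypothesis $\phi|_{F}=0$ together with Lemma \ref{lem:can1} (the covector $Z$ there is part of a coframe, hence nonzero) to conclude $\beta\neq 0$.
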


\begin{proof}
By assumption, there exist a neighborhood $U \subset N$ of $n$ and $G$-invariant real-valued function $\mu$ on $\pi^{-1}(U)$ such that $\mu(q)=0$ for any $q\in \pi^{-1}(n)$ and $d\mu=-\iota(X_3^*)(\iota(X_2^*)(\iota(X_1^*)\star\phi))$ on $\pi^{-1}(U)$. Take a submanifold $(n\in) D$ of N such that $\pi^{-1}(D)\subset \mu^{-1}(0)$. Denote by $v$ the vector field $\mathrm{grad}(\mu)/|\mathrm{grad}(\mu)|^{2}$ on $\pi^{-1}(U)$. Using the integral curve of $v$, define the local diffeomorphism $\Phi:\pi^{-1}(D)\times(-\ep,\ep)\to\pi^{-1}(U)$, where $\ep$ is some positive number. Then putting 
 \begin{align*}
 &\om(t)=\Phi(*,t)^{*}\iota(|\mathrm{grad}(\mu)|v)(\phi), \\
 &\psi(t)=\Phi(*,t)^{*}(\phi), \\
 &f(u,t)=\frac{1}{|\mathrm{grad}(\mu)|}
 \end{align*}
 for each fixed $t\in(-\ep,\ep)$, we have $\Phi^*\phi=\om(t)\we fdt+ \psi(t)$ on $\pi^{-1}(D)\times(-\ep,\ep)$. 
\end{proof}

\begin{remark}
Thus any $G$-invariant Lag $G_2$-structure satisfying the assumption in Proposition \ref{prop:red_1} is locally isomorphic to $\om(t)\we fdt +\psi(t)$ for some one-parameter family of $G$-invariant sLag $\su3$-structures. More strongly, we can prove that every coclosed $G$-invariant Lag $G_2$-structure is locally isomorphic to $\om(t)\we (\det{S(t)})^{\frac{1}{2}}dt +\psi(t)$, where $(e(t),a(t),S(t))$ is the one-parameter family of the triples corresponding to some $(\om(t),\psi(t))$ in cases of $G=\T3$ and $\so3$. See the proof of Proposition  \ref{prop:loc_t3_1} and \ref{prop:6_3}.
\end{remark}

\begin{remark}
As pointed out in \cite{Go}, the following properties hold. Let $G$ be unimodular and fix a basis $\{X_1,X_2,X_3\}$ of $\fg$. In general, let $Y$ be a smooth 7-manifold with an invariant $G_2$-structure $\phi$ admitting a smooth action of $G$. 
\begin{enumerate}
    \item If $\phi$ is closed, then the function $\phi(X_1^*,X_2^*,X_3^*)$ is constant.
    
    \item If $\phi$ is coclosed, then the 1-form $\io(X_3^*)(\io(X_2^*)(\io(X_1^*)\star\phi))$ is closed.
    
\end{enumerate}
The former follows from the following equation
\begin{align*}
d(\phi(X_1^*,X_2^*,X_3^*)) &={} \phi([X_1^*,X_2^*],X_3^*,*)+\phi([X_2^*,X_3^*],X_1^*,*)+\phi([X_3^*,X_1^*],X_2^*,*) \\
 &{}\quad -(d\phi)(X_1^*,X_2^*,X_3^*,*).
\end{align*}
The latter is also proved in the same way. Thus if the invariant $G_2$-structure $\phi$ is torsion-free and the action has both of irregular and regular parts, then $\phi$ is a $G$-invariant Lag $G_2$-structure on regular parts of $Y$.
\end{remark}

\begin{remark}
Let $Y$ be a 7-manifold with a $G_2$-structure $\phi$. 3-dimensional submanifolds $L$ in $Y$ satisfying the condition $\phi|_{L}=0$ in Definition \ref{def:lagg2} are studied in \cite{GS}. The authors characterized the infinitesimal deformation space of the Lagrangian-type submanifolds in $Y$. These submanifolds are called {\it maximally $\star\phi$-like submanifolds} in (\cite{HL}, II.6.).
\end{remark}


\section{Local structure of $\T3$-invariant $G_2$-manifolds}\label{sec:loc_t3}

\subsection{Torsion of $\su3$-structures in the case of $G=\T3$}\label{subsec:tor_t3}

In this subsection, we first calculate the torsion of $\T3$-invariant sLag $\su3$-structures on the total space of a principal $\T3$-bundle $\pi:P \to M$ over a $3$-manifold $M$.  Next we prove some corollaries that follow from the derived expressions. 

Let $G=\T3$, and $P$ the total space of a principal $\T3$-bundle over a 3-manifold $M$. Fix a basis $\{X_1,X_2,X_3\}$ of $\ft3$. Then as we have seen in Section \ref{sec:slag}, $\T3$-invariant sLag $\su3$-structures $(\om,\psi)$ on $P$ uniquely correspond to triples $(e,a,S)$ of solder 1-forms $e=e^kX_k \in \Om^1(M;\ft3)$, connection 1-forms $a=a^kX_k\in\Om^1(P;\ft3)^{\T3}$ and $\psym3$-valued functions $S=(S_{ij}) \in \Om^0(M;\psym3)$.
This correspondence satisfies (\ref{eq:su3c}) and (\ref{eq:su3d}).

Let $(\om,\psi)$ be a $\T3$-invariant sLag $\su3$-structure on $P$, which has the triple $(e,a,S)$ corresponding to $(\om,\psi)$. Denote by $T=\{T_{ij}\}$ and $\Om=\{\Om_{ij}\}$ the torsion and curvature forms of $(e,a)$: $de^i=T_{ij}\hat{e}^j$ and $da^i=\Om_{ij}\hat{e}^j$ for $i=1,2,3.$ Also for a matrix-valued function $A\in \Om^0(M;\mathrm{M}(k;\R))$, denote the derivatives by $dA_{ij}=A_{ij,k}e^k$.
Then, by a simple calculation, we have the following:
\begin{align}\label{eq:3_1}
d\hat{e}^i = \epsilon_{i\al\be}T_{\al\be}e^{123}, \quad d\hat{a}^i = \epsilon_{i\al\be}\Omega_{\al\gamma}\hat{e}^{\gamma}a^{\be}
\end{align}
for $i=1,2,3$.
By (\ref{eq:su3c})--(\ref{eq:su3f}) and (\ref{eq:3_1}), we obtain the following expressions for the torsion of $(\om,\psi)$ in terms of $(e,a,S)$. The proof is straightforward calculation of differential forms.

\begin{prop}\label{prop:4b}
Let $(e,a,S)$ be the triple corresponding to $(\om,\psi)$. Then we have
\begin{align*}
&d\omega = \left(\epsilon_{j\alpha\beta}((\det{S})^{-\frac{1}{2}}\tilde{S})_{i\alpha,\beta} - (\det{S})^{-\frac{1}{2}}\tilde{S}_{i\alpha}T_{\alpha j}\right)a^i\hat{e}^j + (\det{S})^{-\frac{1}{2}}\tilde{S}_{ij}\Omega_{ij}e^{123} ,\\
&d\left(\frac{1}{2}\omega \wedge \omega\right) = -\left(S_{i\alpha,\alpha}+\epsilon_{\alpha\beta\gamma}S_{i\alpha}T_{\beta\gamma}\right)\hat{a}^ie^{123} ,\\
&d\psi = T_{ij}\hat{a}^i\hat{e}^j - \epsilon_{i\alpha\beta}\Omega_{\alpha\beta}a^ie^{123} ,\\
&d\psi^{\#} =  d\left((\det{S})^{-\frac{1}{2}}\right)a^{123} + \det{S}^{-\frac{1}{2}}\Omega_{ij}\hat{a}^i\hat{e}^j   \\ &{}\quad\quad\quad - d\left((\det{S})^{\frac{1}{2}}\right)a^k\hat{e}^k + \det{S}^{\frac{1}{2}}\epsilon_{i\alpha\beta}T_{\alpha\beta}a^ie^{123}.\no
\end{align*}
\end{prop}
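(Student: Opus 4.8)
The plan is to differentiate the four closed forms (\ref{eq:su3c})--(\ref{eq:su3f}) for $\om$, $\psi$, $\psi^{\#}$ and $\tfrac12\om\w\om$ term by term, and then rewrite each result in the coframe-adapted basis of $P$-forms that appears on the right-hand side of the statement. The only inputs needed are: the structure equations $de^i=T_{ij}\hat{e}^j$, $da^i=\Om_{ij}\hat{e}^j$; their consequences (\ref{eq:3_1}) for $d\hat{e}^i$ and $d\hat{a}^i$; the fact that $S$, hence $\det S$ and $\tilde{S}$, is pulled back from $M$, so that $dS_{ij}=S_{ij,k}e^k$ and $d(\dets^{\pm1/2})$ are horizontal; the elementary identities on $P$, namely $e^i\w\hat{e}^j=\de_{ij}e^{123}$, $a^i\w\hat{a}^j=\de_{ij}a^{123}$, $e^i\w e^j=\e_{ij\g}\hat{e}^\g$, $a^i\w a^j=\e_{ij\g}\hat{a}^\g$, together with the vanishing of any $4$-form built from the three horizontal $1$-forms $e^1,e^2,e^3$ (so $\hat{e}^i\w\hat{e}^j=0$, $\hat{e}^i\w e^{jk}=0$, $de^{123}=0$); and, only for matching the final index pattern, the cofactor algebra $\tilde{S}_{ij}=\tfrac12\e_{i\al\be}\e_{jkl}S_{k\al}S_{l\be}$, $\tilde{S}S=(\det S)I$.

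First I would treat $d\om$. Applying Leibniz to $\dets^{-1/2}\tilde{S}_{ij}\,a^i\w e^j$ produces three groups of terms. The derivative landing on the coefficient gives $(\dets^{-1/2}\tilde{S}_{ij})_{,k}\,e^k\w a^i\w e^j$; reordering via $e^k\w e^j=\e_{kj\g}\hat{e}^\g$ and relabelling the two $\e$-indices turns this into the $\e_{j\al\be}(\dets^{-1/2}\tilde{S})_{i\al,\be}\,a^i\hat{e}^j$ term. The derivative on $a^i$ gives $\dets^{-1/2}\tilde{S}_{ij}\,\Om_{i\g}\hat{e}^\g\w e^j=\dets^{-1/2}\tilde{S}_{ij}\Om_{ij}e^{123}$ using $\hat{e}^\g\w e^j=\de_{\g j}e^{123}$. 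The derivative on $e^j$ gives $-\dets^{-1/2}\tilde{S}_{i\al}T_{\al j}\,a^i\hat{e}^j$. Collecting the three yields the stated formula for $d\om$.

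The remaining three are shorter, since most terms vanish for dimensional reasons. In $d(\tfrac12\om\w\om)=-d(S_{ij}\hat{a}^i\w\hat{e}^j)$ the contribution of $d\hat{a}^i$ carries a factor $\hat{e}^\g\w\hat{e}^j=0$ and drops out; what remains is the $dS$-term, which contracts via $e^k\w\hat{e}^j=\de_{kj}e^{123}$ into $-S_{i\al,\al}\hat{a}^ie^{123}$, and the $d\hat{e}^j$-term, which gives the torsion correction $-\e_{\al\be\g}S_{i\al}T_{\be\g}\hat{a}^ie^{123}$. In $d\psi=d(-(\det S)e^{123}+e^k\w\hat{a}^k)$ the terms $d(\det S)\w e^{123}$ and $de^{123}$ both vanish, $de^k\w\hat{a}^k$ yields $T_{ij}\hat{a}^i\hat{e}^j$, and $-e^k\w d\hat{a}^k$ yields, after using (\ref{eq:3_1}) and one $\e$-contraction, the curvature term $-\e_{i\al\be}\Om_{\al\be}a^ie^{123}$. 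For $d\psi^{\#}=d(\dets^{-1/2}a^{123}-\dets^{1/2}a^k\w\hat{e}^k)$ one first records $da^{123}=\Om_{ij}\hat{a}^i\w\hat{e}^j$ and $d(a^k\w\hat{e}^k)=-\e_{k\al\be}T_{\al\be}a^k\w e^{123}$ (again the $\hat{e}\w\hat{e}$ pieces vanish), while the horizontal differentials $d(\dets^{\mp1/2})$ survive wedged against $a^{123}$ and $a^k\hat{e}^k$; assembling the four surviving terms gives the last identity.

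The work is entirely clerical, and the one place to be careful is precisely that: keeping the Koszul signs correct when commuting the mixed-degree factors $e^i,a^i$ (degree $1$), $\hat{e}^i,\hat{a}^i$ (degree $2$) and $e^{123},a^{123}$ (degree $3$), and correctly contracting the repeated Levi-Civita symbols so as to land on the exact index pattern printed in the statement. The $d\om$ computation is the most error-prone, as it has the most surviving terms and needs the $\e$-relabelling of the coefficient-derivative term. A convenient consistency check is to evaluate all four expressions on the flat model of Example~\ref{ex:R^6} (where $T=\Om=0$ and $S=I$), recovering $d\om_0=d\psi_0=d\psi_0^{\#}=0$, and more generally to test the computation against the identifications (\ref{eq:3d})--(\ref{eq:3f}) already used in the proof of Proposition~\ref{prop:3a}; this pins down every sign.
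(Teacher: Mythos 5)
Your proposal is correct and follows exactly the route the paper intends: the paper offers no proof beyond "straightforward calculation of differential forms" using (\ref{eq:su3c})--(\ref{eq:su3f}) and (\ref{eq:3_1}), and your term-by-term Leibniz expansion, with the correct sign $d(e^k\w\hat{a}^k)=de^k\w\hat{a}^k-e^k\w d\hat{a}^k$ and the vanishing of horizontal $4$-forms, reproduces all four identities.
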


We have the following corollaries to Proposition \ref{prop:4b}. 

\begin{cor} \label{cor:4a}
Let $(e,a,S)$ be the triple corresponding to $(\om,\psi)$. 
\begin{enumerate}

\item The $\su3$-structure $(\omega,\psi)$ is half-flat if and only if 
\begin{eqnarray*}
&&T_{ij} =0, \quad\Omega_{ij} = \Omega_{ji}, \quad S_{ik,k} =0 \quad\text{for}\quad i,j=1,2,3.
\end{eqnarray*}

\item Suppose $(\omega,\psi)$ is half-flat. Then $d\omega =0$ if and only if 
\begin{eqnarray*}
&\epsilon_{i\alpha\beta}((\det{S})^{-\frac{1}{2}}\tilde{S})_{j\alpha,\beta}=0 ,\quad 
\tilde{S}_{\al\be}\Omega_{\al\be} = 0 \quad\text{for}\quad i,j=1,2,3.
\end{eqnarray*}

\item Suppose $(\omega,\psi)$ is half-flat. Then $d\psi^{\#} = 0$ if and only if 
\begin{eqnarray*}
d\left(\det{S}\right)=0, \quad 
\Omega_{ij} = 0\quad \text{for}\quad i,j=1,2,3 .
\end{eqnarray*}

\item The $\su3$-structure $(\omega,\psi)$ is torsion-free if and only if 
\begin{eqnarray*}
&&T_{ij} = \Omega_{ij} = 0 , \quad 
\epsilon_{i\alpha\beta}\tilde{S}_{j\alpha.\beta} = 0,\quad 
d\left(\det{S}\right)= 0\quad\text{for}\quad i,j=1,2,3.
\end{eqnarray*}

\end{enumerate}
\end{cor}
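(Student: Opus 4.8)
The plan is to derive Corollary~\ref{cor:4a} directly from the explicit formulas for $d\omega$, $d(\tfrac12\omega\wedge\omega)$, $d\psi$ and $d\psi^{\#}$ in Proposition~\ref{prop:4b} by reading off when each of these forms vanishes. The key observation that makes this routine is that, on $P$, the forms $\{a^i\hat e^j\}$, $\{\hat a^i\hat e^j\}$, $\{a^i e^{123}\}$, $\{\hat a^i e^{123}\}$, $\{a^k\hat e^k\}$ and $a^{123}$ are pointwise linearly independent whenever they have different form-type (e.g.\ $a^i\hat e^j$ is of bidegree $(1,2)$ in the $(a,e)$ splitting while $\hat a^i\hat e^j$ is of bidegree $(2,2)$), so the vanishing of a sum of terms of distinct types forces each block of coefficients to vanish separately. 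Since $e=e^iX_i$ is a solder $1$-form, $e^{123}\neq0$ everywhere, so $e^{123}$, $\hat e^j$, and $e^j$ are nowhere zero in the appropriate sense, and one may cancel these factors.

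First I would treat part~(1): $(\omega,\psi)$ is half-flat iff $d(\omega\wedge\omega)=0$ and $d\psi=0$ (Definition~\ref{def:tor_su3}). From the formula for $d\psi$, the coefficient of $\hat a^i\hat e^j$ gives $T_{ij}=0$ and the coefficient of $a^i e^{123}$ gives $\epsilon_{i\alpha\beta}\Omega_{\alpha\beta}=0$, i.e.\ $\Omega$ is symmetric. Feeding $T_{ij}=0$ into the formula for $d(\tfrac12\omega\wedge\omega)$ collapses it to $-S_{i\alpha,\alpha}\hat a^i e^{123}$, whose vanishing is $S_{ik,k}=0$. Conversely these three conditions plainly kill both forms. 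Part~(2): assuming half-flatness (so $T=0$, $\Omega$ symmetric), the formula for $d\omega$ becomes $\epsilon_{j\alpha\beta}((\det S)^{-1/2}\tilde S)_{i\alpha,\beta}\,a^i\hat e^j+(\det S)^{-1/2}\tilde S_{ij}\Omega_{ij}e^{123}$; the two summands have distinct form-type, so $d\omega=0$ iff $\epsilon_{j\alpha\beta}((\det S)^{-1/2}\tilde S)_{i\alpha,\beta}=0$ and $\tilde S_{ij}\Omega_{ij}=0$ (using symmetry of $\Omega$ to write $\tilde S_{\alpha\beta}\Omega_{\alpha\beta}$). Part~(3): under half-flatness the formula for $d\psi^{\#}$ reduces to $d((\det S)^{-1/2})a^{123}+(\det S)^{-1/2}\Omega_{ij}\hat a^i\hat e^j-d((\det S)^{1/2})a^k\hat e^k$; the terms $a^{123}$, $\hat a^i\hat e^j$, $a^k\hat e^k$ are of pairwise distinct type, so vanishing forces $d((\det S)^{-1/2})=0$ (equivalently $d(\det S)=0$), $\Omega_{ij}=0$, and the third term then vanishes automatically.

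Part~(4) is the synthesis: torsion-free means $d\omega=d\psi=d\psi^{\#}=0$, hence in particular half-flat, so parts~(1)--(3) apply. Combining: $T_{ij}=0$ and $\Omega_{ij}=0$ from (1) and (3); $d(\det S)=0$ from (3); and once $\Omega=0$ and $\det S$ is constant, the condition $\epsilon_{j\alpha\beta}((\det S)^{-1/2}\tilde S)_{i\alpha,\beta}=0$ from (2) simplifies, since the scalar $(\det S)^{-1/2}$ pulls through the derivative, to $\epsilon_{i\alpha\beta}\tilde S_{j\alpha,\beta}=0$. The remaining conditions from (1)--(3) ($\Omega$ symmetric, $S_{ik,k}=0$, $\tilde S_{\alpha\beta}\Omega_{\alpha\beta}=0$) are then implied by $\Omega=0$ together with $\epsilon_{i\alpha\beta}\tilde S_{j\alpha,\beta}=0$ (this last gives the symmetry of $\tilde S_{j\alpha,\beta}$ in an appropriate sense, hence $S_{ik,k}=0$ after contracting and using $\det S$ constant), so the four listed conditions are exactly equivalent to torsion-freeness. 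Conversely it is immediate that the listed conditions make all four exterior derivatives vanish.

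The only mildly delicate point — the ``main obstacle'' — is bookkeeping in part~(4): one must check that the half-flat side conditions $S_{ik,k}=0$ and $\tilde S_{\alpha\beta}\Omega_{\alpha\beta}=0$ are genuinely subsumed once one imposes $\Omega=0$, $d(\det S)=0$ and $\epsilon_{i\alpha\beta}\tilde S_{j\alpha,\beta}=0$, rather than being independent extra constraints. This is a short linear-algebra check: $\Omega=0$ kills $\tilde S_{\alpha\beta}\Omega_{\alpha\beta}$ trivially, and from $\epsilon_{i\alpha\beta}\tilde S_{j\alpha,\beta}=0$ one recovers that $\tilde S_{j\alpha,\beta}$ is symmetric in $\alpha,\beta$; differentiating the identity $\tilde S S=(\det S)I$ with $d(\det S)=0$ then converts this into the statement $S_{ik,k}=0$. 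Everything else is a direct comparison of coefficients using linear independence of the monomials in $a$ and $e$, and the non-vanishing of $e^{123}$ coming from the solder condition.
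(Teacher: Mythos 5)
Your argument is correct and is essentially the paper's own: Corollary \ref{cor:4a} is presented there as a direct consequence of Proposition \ref{prop:4b}, obtained exactly as you do by comparing coefficients of the pointwise linearly independent monomials in $a$ and $e$ (using $e^{123}\neq 0$ from the solder condition) and then specializing under the half-flat hypotheses. The only remark worth adding is that your ``main obstacle'' in part (4) can be bypassed entirely: for the converse it suffices that the four listed conditions annihilate each formula in Proposition \ref{prop:4b}, after which half-flatness (hence $S_{ik,k}=0$ and the other side conditions) follows automatically from $d\omega=0\Rightarrow d(\omega\wedge\omega)=0$; the direct deduction of $S_{ik,k}=0$ that you sketch does go through (it is in effect the paper's later Lemma \ref{lem:5b}), but it is not needed for the equivalence.
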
 

The following corollary is consistent with the well-known fact that a $\T3$-invariant Calabi-Yau $3$-fold whose orbits are special Lagrangian submanifolds is locally constructed by a solution of the real Monge-Amp\'ere equation on a domain of $\R^3$. 

\begin{cor}\label{cor:4_2}
Let $(\omega,\psi)$ be torsion-free, and $(e,a,S)$ the triple corresponding to $(\om,\psi)$. Then, for each $m \in M$,  there exists a coordinate neighborhood $(U;x^1,x^2,x^3)$ of $M$, a trivialization $F: \pi^{-1}(U) \to U \times T^{3}$ and a function $\rho \in C^{\infty}(U;\mathbb{R})$ such that
\begin{align}
&a|_{\pi^{-1}(U)} = F^{*}(g^{-1}dg), \quad e^i|_{U} = dx^i \label{eq:4g},\\
&\tilde{S}_{ij}|_{U} = \frac{\partial^2\rho}{\partial x^i\partial x^j} \label{eq:4h},\quad \det{\left(\frac{\partial^2\rho}{\partial x^i\partial x^j}\right)} = \mathrm{const.} 
\end{align}
 for $i,j=1,2,3$, where $g^{-1}dg$ is the natural left-invariant $1$-form on $\T3$.
\end{cor}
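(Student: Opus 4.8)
The plan is to start from Corollary \ref{cor:4a}(4), which already encodes the torsion-free condition for the triple $(e,a,S)$ as the system $T_{ij}=\Om_{ij}=0$, $\e_{i\al\be}\tilde S_{j\al,\be}=0$ and $d(\det S)=0$. Since $T_{ij}=0$ means $de^i=0$, each $e^i$ is locally exact on $M$: pick a coordinate neighbourhood and write $e^i=dx^i$ for suitable functions $x^i$ (the condition $e^{123}\neq0$ guarantees $(x^1,x^2,x^3)$ is a genuine coordinate chart). Similarly $\Om_{ij}=0$ means $da^i=0$, so the $\ft3$-valued connection $a$ is flat; on a possibly smaller neighbourhood $U$ this forces the bundle to be trivial and $a$ to be gauge-equivalent to the Maurer-Cartan form, i.e.\ there is a trivialisation $F:\pi^{-1}(U)\to U\times\T3$ with $a|_{\pi^{-1}(U)}=F^*(g^{-1}dg)$. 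This gives \eqref{eq:4g}.

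Next I would produce the potential $\rho$. In the coordinates $x^i$ we have $dS_{ij}=S_{ij,k}\,dx^k$, and likewise for $\tilde S$; the condition $\e_{i\al\be}\tilde S_{j\al,\be}=0$ says precisely that for each fixed $j$ the 1-form $\tilde S_{j\al}\,dx^\al$ is closed, hence (shrinking $U$ to a ball) exact: $\tilde S_{j\al}=\partial_\al\sigma_j$ for some functions $\sigma_j\in C^\infty(U)$. Because $\tilde S$ is symmetric, $\partial_\al\sigma_j=\partial_j\sigma_\al$, so the 1-form $\sigma_j\,dx^j$ is itself closed, hence exact: $\sigma_j=\partial_j\rho$ for a single function $\rho\in C^\infty(U)$. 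Then $\tilde S_{ij}=\partial^2\rho/\partial x^i\partial x^j$, which is \eqref{eq:4h}. Finally $d(\det S)=0$ together with the identity $\det\tilde S=(\det S)^2$ gives $\det(\partial^2\rho/\partial x^i\partial x^j)=\det\tilde S=\mathrm{const}$, the Monge-Amp\`ere constraint; positivity of $S$ (hence of $\tilde S=(\det S)S^{-1}$) makes the Hessian positive-definite, so $\rho$ is a genuine convex potential and $S$ is recovered from $\rho$ via $S=(\det\tilde S)^{1/3}\tilde S^{-1}$.

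The one point that needs a little care — the main obstacle — is the bookkeeping that lets all three Poincar\'e-lemma steps be carried out on a single common neighbourhood and that reconciles the several trivialisations: $e^i=dx^i$ forces a choice of chart on $M$, the flatness of $a$ forces a (compatible) local trivialisation of the bundle, and the two successive integrations producing $\sigma_j$ and then $\rho$ each potentially shrink $U$. None of these is deep; one just intersects the relevant neighbourhoods and invokes the Poincar\'e lemma on a ball. I would also remark that \eqref{eq:su3c}–\eqref{eq:su3d} show the data $(e,a,S)$ determine $(\om,\psi)$, so exhibiting $(e,a,S)$ in the stated normal form is the same as exhibiting $(\om,\psi)$ in normal form, which is what makes the corollary match the classical $\T3$-invariant special Lagrangian Calabi–Yau picture.
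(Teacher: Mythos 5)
Your argument is correct and follows essentially the same route as the paper: it reads off $T=\Om=0$, $\e_{i\al\be}\tilde S_{j\al,\be}=0$ and $d(\det S)=0$ from Corollary \ref{cor:4a}(4), and then performs the two successive applications of the Poincar\'e lemma that the paper isolates as Lemma \ref{lemma:4b}. The only blemish is in your closing aside: since $\det\tilde S=(\det S)^2$, the recovery of $S$ from $\tilde S$ should read $S=(\det\tilde S)^{1/2}\tilde S^{-1}$ rather than $(\det\tilde S)^{1/3}\tilde S^{-1}$, but this side remark is not needed for the corollary.
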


This corollary is proved by combining the fourth in Corollary \ref{cor:4a} and Poincar\'e's lemma.

\begin{lemma}\label{lemma:4b}
Let $U$ be a contractible domain of $\R^3$ and $\left( h_{ij}\right) \in \Om^0(U;\sym3)$. Suppose that $\ep_{i\al\be}\dfrac{\partial h_{j\al}}{\partial x^{\be}}=0$ for $i,j=1,2,3$. Then there exists a function $f\in \Om^0(U;\R)$ such that $h_{ij}=\dfrac{\partial^2 f}{\partial x^i \partial x^j}$ for $i,j=1,2,3$.
\end{lemma}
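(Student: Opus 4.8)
The statement is a symmetric-tensor analogue of the Poincaré lemma: on a contractible domain, a symmetric matrix field whose "curl" vanishes in the sense $\ep_{i\al\be}\,\partial h_{j\al}/\partial x^\be = 0$ is a Hessian. The plan is to peel this off in two stages. First, for each fixed $j$, regard the row $(h_{j1},h_{j2},h_{j3})$ as the components of a $1$-form $\eta_j = h_{j\al}\,dx^\al$ on $U$. The hypothesis $\ep_{i\al\be}\,\partial h_{j\al}/\partial x^\be = 0$ says precisely that $d\eta_j = 0$ for every $j$. Since $U$ is contractible, Poincaré's lemma gives functions $f_j \in \Om^0(U;\R)$ with $\eta_j = df_j$, i.e.\ $h_{j\al} = \partial f_j/\partial x^\al$ for all $j,\al$.

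Second, I use the symmetry $h_{ij} = h_{ji}$ to produce a single potential. From $h_{j\al} = \partial f_j/\partial x^\al$ and symmetry we get $\partial f_j/\partial x^k = h_{jk} = h_{kj} = \partial f_k/\partial x^j$ for all $j,k$; that is, the $1$-form $\mu = f_j\,dx^j$ satisfies $d\mu = (\partial f_j/\partial x^k - \partial f_k/\partial x^j)\,dx^k\w dx^j\big/1 = 0$ (each coefficient vanishes by the previous identity). Applying Poincaré's lemma once more on the contractible domain $U$ yields $f \in \Om^0(U;\R)$ with $f_j = \partial f/\partial x^j$. Then $h_{ij} = \partial f_i/\partial x^j = \partial^2 f/\partial x^i\partial x^j$, which is the desired conclusion, and incidentally recovers the symmetry of $h$ automatically.

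I do not expect any real obstacle here — the lemma is essentially two applications of the ordinary Poincaré lemma, glued by the symmetry hypothesis. The only point demanding a little care is the bookkeeping that translates the index condition $\ep_{i\al\be}\,\partial h_{j\al}/\partial x^\be = 0$ into $d\eta_j = 0$: one should note that the three scalar equations obtained by letting $i$ run over $\{1,2,3\}$ are exactly the three components of $d\eta_j$ in the basis $\{dx^{23}, dx^{31}, dx^{12}\}$, so the vanishing is equivalent, not merely implied. With that identification in place the argument is immediate, and it is this lemma that supplies the function $\rho$ with $\tilde S_{ij} = \partial^2\rho/\partial x^i\partial x^j$ in Corollary \ref{cor:4_2} once one knows, from the fourth item of Corollary \ref{cor:4a}, that $\ep_{i\al\be}\tilde S_{j\al,\be} = 0$.
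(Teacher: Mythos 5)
Your proof is correct and is essentially the same as the paper's: the paper also forms the row $1$-forms $h_{i\alpha}\,dx^{\alpha}$, notes the hypothesis makes them closed, applies Poincar\'e's lemma to get potentials $\varphi_i$, uses the symmetry of $(h_{ij})$ to see that $\varphi_i\,dx^i$ is closed, and applies Poincar\'e's lemma a second time. Your extra remark identifying the three scalar equations with the components of $d\eta_j$ is a harmless elaboration of the same step.
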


\begin{proof}[Proof of Lemma \ref{lemma:4b}]
Let us take 1-forms $f_i=h_{ij}dx^j$ for $i=1,2,3$. Then, by assumption, the 1-forms are closed. Thus, by Poincar\'e's lemma, there exist functions $\varphi_i \in \Om^0(U;\R)$ such that $f_i=\dfrac{\partial \varphi_i}{\partial x^j}dx^j$ for $i=1,2,3$. Since the matrix $(h_{ij})=(\dfrac{\partial \varphi_i}{\partial x^j})$ is symmetric, we have $d(\varphi_idx^i)=0$. Hence, using Poincar\'e's lemma again, we obtain a function $f \in \Om^0(U;\R)$ such that $h_{ij}=\dfrac{\partial^2 f}{\partial x^i \partial x^j}$ for $i,j=1,2,3$. The function $f$ is our goal. 
\end{proof}

\begin{proof}[Proof of Corollary \ref{cor:4_2}]The existence of $(e,a,F)$ satisfying (\ref{eq:4g}) is deduced form the fourth in Corollary \ref{cor:4a}. A function $\rho$ satisfying (\ref{eq:4h}) is constructed by combining the fourth in Corollary \ref{cor:4a} and Lemma \ref{lemma:4b}.
\end{proof}

In \cite{Ba}, Baraglia established a generalization of the Monge-Amp\'ere equation  to $\T4$-invariant $G_2$-manifolds whose orbits are coassociative submanifolds. In the sequel, we study generalizations of Corollary \ref{cor:4_2} to $\T3$- and $\so3$-invariant Lagrangian-type fibered $G_2$-manifolds.

\subsection{Local structure of $\T3$-invariant $G_2$-manifolds}\label{subsec:loc_t3}

In this subsection, we  prove that all torsion-free $\T3$-invariant Lag $G_2$-structures are locally described by orbits of constrained dynamical systems on the spaces of the triples $(e,a,S)$.



In Section \ref{sec:red}, we proved that every coclosed $\T3$-invariant Lag $G_2$-structure was locally isomorphic to  $\om(t)\w f(t)dt + \psi(t)$ on $P \times (t_1,t_2)$, where $(\om(t),\psi(t),f(t))$ was some one-parameter family of $\T3$-invariant sLag $\su3$-structures $(\om,\psi)$ and $\T3$-invariant positive functions $f$ on some $P$, defined on some interval $(t_1,t_2)$.

Let $(e(t),a(t),S(t),f(t))$ be a one-parameter family defined on an interval $(t_1,t_2)$ of the triples corresponding to $(\om,\psi)$ and $\T3$-invariant positive functions $f$ on $P$. we use the notation in Subsection \ref{subsec:tor_t3}.

\begin{prop}\label{prop:loc_t3_1}
 The $G_2$-structure $\om(t)\w f(t)dt + \psi(t)$ is torsion-free if and only if the quadruplet $(e(t),a(t),S(t),f(t))$ satisfies the following equations:
 \begin{align*}
     &de^i=0,\quad\Om_{ij}=\Om_{ji},\quad S_{i\al,\al}=0,\quad d(f(\det S)^{-\frac{1}{2}})=0;\\
      &\pt{e^i}{t}=0,\quad \pt{a^i}{t}=-f(\det{S})^{-\frac{1}{2}}\ep_{i\al\be}\tilde{S}_{k\al,\be}e^k,\quad \frac{\partial S_{ij}}{\partial t}= -f\det{S}^{-\frac{1}{2}}\Om_{ij}
 \end{align*}
 for $i,j=1,2,3$ and for every $t\in(t_1,t_2)$.
\end{prop}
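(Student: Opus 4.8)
The plan is to apply Proposition~\ref{prop:flow}, which already characterizes torsion-freeness of a $G_2$-structure $\om(t)\w dt+\psi(t)$ built from a one-parameter family of $\su3$-structures. The only new ingredient here is the reparametrization factor $f(t)$: writing $\phi=\om(t)\w f(t)\,dt+\psi(t)=\tilde\om(t)\w dt+\psi(t)$ with $\tilde\om(t)=f(t)\om(t)$ would be the naive move, but $f\om$ is not an $\su3$-structure 2-form, so instead I would introduce a new time coordinate. First I would fix $t_0$ and set $s=s(t)=\int_{t_0}^t f(\tau)\,d\tau$, a diffeomorphism of intervals since $f>0$, so that $f\,dt=ds$ and $\phi=\om(s)\w ds+\psi(s)$ is literally of the form covered by Proposition~\ref{prop:sug2} and Proposition~\ref{prop:flow}. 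Torsion-freeness of $\phi$ is then equivalent to the half-flat constraints $d(\om\w\om)=0$, $d\psi=0$ together with the equations of motion $\partial_s\psi=d\om$ and $\partial_s(\tfrac12\om\w\om)=d\psi^{\#}$, all at each $s$.

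The next step is to translate each of these four conditions into the language of the triple $(e,a,S)$ using Corollary~\ref{cor:4a} and Proposition~\ref{prop:4b}. The constraint $d(\om\w\om)=0$ and $d\psi=0$ together are exactly half-flatness, which by part~(1) of Corollary~\ref{cor:4a} gives $T_{ij}=0$ (i.e.\ $de^i=0$), $\Om_{ij}=\Om_{ji}$, and $S_{i\al,\al}=0$. For the equations of motion I would compute $d\om$ and $d\psi^{\#}$ from Proposition~\ref{prop:4b}, specialized to the half-flat case $T_{ij}=0$: there $d\om=\ep_{j\al\be}((\det S)^{-1/2}\tilde S)_{i\al,\be}\,a^i\hat e^j+(\det S)^{-1/2}\tilde S_{ij}\Om_{ij}e^{123}$ and $d\psi^{\#}=d((\det S)^{-1/2})a^{123}+(\det S)^{-1/2}\Om_{ij}\hat a^i\hat e^j-d((\det S)^{1/2})a^k\hat e^k$, while $\tfrac12\om\w\om=-S_{ij}\hat a^i\hat e^j$. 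Meanwhile $\psi=-(\det S)e^{123}+e^k\hat a^k$, so $\partial_s\psi$ has an $e^{123}$-term from $\partial_s(\det S)$ and an $\hat a^k\,e^k$-type term from $\partial_s\hat a^k=\widehat{\partial_s a^k}$ (using $\partial_s e^i=0$, which I will have to establish). Matching $\partial_s\psi=d\om$ coefficient-by-coefficient against the basis monomials $\{e^{123},\,a^i\hat e^j,\,a^{123},\,\hat a^i e^j,\dots\}$ of $\Om^4(P)$, and similarly matching $\partial_s(\tfrac12\om\w\om)=d\psi^{\#}$, yields: $\partial_s e^i=0$; $\partial_s a^i=-\ep_{i\al\be}\tilde S_{k\al,\be}e^k$ from the $a^i\hat e^j$ components; $\partial_s S_{ij}=-\Om_{ij}$ from the $\hat a^i e^{123}$ components; plus the remaining constraint $d(\det S)=0$ coming from the $a^{123}$ component of $d\psi^{\#}$ (which must vanish since $\psi$ has no $a^{123}$-term).

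Finally I would undo the substitution: converting $\partial/\partial s$ back to $\partial/\partial t$ via $\partial_s=f^{-1}\partial_t$ and multiplying through by $f$ reproduces exactly the stated equations, with $d(\det S)=0$ in the $s$-picture becoming $d(f(\det S)^{-1/2})=0$ after the chain rule is applied to the coefficient of $dt$ — more precisely, the $a^{123}$-free condition on $\psi$ forces the $s$-derivative identity $d((\det S)^{-1/2})=$ (the $a^{123}$-coefficient of $d\psi^{\#}$), and tracking the $f$ through the reparametrization converts this into $d(f(\det S)^{-1/2})=0$ as a condition on $M$. I should double-check that this is indeed what the $f$-dependence produces, since the factor appears in $\om(t)\w f(t)dt$ and hence modifies only the $dt$-components; the cleanest bookkeeping is to note $\star_\phi\phi=-\psi^{\#}\w f\,dt+\tfrac12\om\w\om$ by Proposition~\ref{prop:sug2} applied in the $s$-variable, so $d\star_\phi\phi=0$ splits into the $dt$-free part $d_M(\tfrac12\om\w\om)-\partial_t(f\psi^{\#})=0$ wait—I would instead keep everything in $s$ until the very end and only then substitute, which avoids this confusion entirely. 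The main obstacle I anticipate is purely organizational rather than conceptual: correctly separating the exterior derivative on $P\times(t_1,t_2)$ into its $P$-part and its $dt$-part, keeping the $f$ in the right place throughout, and matching the (rather large) number of differential-form monomials without sign errors. None of the individual computations is hard — they are all instances of the formulas already recorded in Proposition~\ref{prop:4b} — but assembling them consistently is where care is needed.
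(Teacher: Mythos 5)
Your overall strategy---reduce to Proposition \ref{prop:flow}, then translate the four conditions through Corollary \ref{cor:4a} and Proposition \ref{prop:4b} and match coefficients---is the same as the paper's. But there is a genuine gap at the very first step: the reparametrization $s=\int_{t_0}^t f(\tau)\,d\tau$ treats $f$ as a function of $t$ alone, whereas in this proposition $f(t)$ is, for each $t$, a $\T3$-invariant positive function on $P$ (see the sentence preceding the proposition, and Proposition \ref{prop:red_1}, where $f=1/|\mathrm{grad}(\mu)|$ visibly depends on the base point). The spatial variation of $f$ is not a technicality; it is the entire content of the constraint $d(f(\det S)^{-\frac12})=0$, which for spatially constant $f$ would collapse to $d(\det S)=0$ --- a strictly different (stronger) statement than the one you are asked to prove. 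With $f$ depending on $m\in M$, your substitution gives $ds=f\,dt+(\partial s/\partial x^i)\,dx^i$, so $\om\wedge f\,dt+\psi\neq\om\wedge ds+\psi$: the change of variables injects $\om\wedge(\partial s/\partial x^i)dx^i$ into the ``$\psi$'' part, and Propositions \ref{prop:sug2} and \ref{prop:flow} cannot be invoked verbatim in the $s$-variable. The correct move, which the paper makes, is to keep $f$ inside the exterior derivative: splitting $d=d_P+dt\wedge\partial_t$ shows $d\phi=0$ iff $d\psi=0$ and $\partial_t\psi=d(f\om)$, and $d\star\phi=0$ iff $d(\om\wedge\om)=0$ and $\partial_t(\tfrac12\om\wedge\om)=d(f\psi^{\#})$. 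The terms $df\wedge\om$ and $df\wedge\psi^{\#}$ are then precisely what produce the condition $(f(\det S)^{-\frac12})_{,i}=0$ and the $f_{,\be}$-corrections in the evolution of $a$.

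A secondary, smaller omission: even after the coefficient matching, the system you obtain is overdetermined --- besides the $\hat a^i\hat e^j$- and $a^i\hat e^j$-components there is an extra $a^ie^{123}$-equation (the analogue of \eqref{eq:5_34}) relating $q$ and $S$ to $d(f(\det S)^{\frac12})$, and one must check it is implied by the others. That is exactly what Lemma \ref{lem:5b} (the identity $CS-SC=(\det S)^{-\frac12}SDS$, hence $CS=SC$ on the constraint set $S_{i\al,\al}=0$) is for, together with Lemma \ref{lem:5a} for the induced evolution of $\det S$. Your proposal does not address this consistency step.
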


In the proof of Proposition \ref{prop:loc_t3_1}, we use the following lemmas. Let $A$ be an $n\times n$ matrix-valued function defined on an interval and $\tilde{A}$ the cofactor matrix of $A$, which is $(\det{A})A^{-1}$ when $A$ is regular. 
The proof of Lemma \ref{lem:5a} is straightforward.

\begin{lemma}\label{lem:5a} 
If we have $\dfrac{\partial A}{\partial t}=B$, then  $\dfrac{\partial (\det{A})}{\partial t}=\mathrm{tr}({\tilde{A}B})$. In particular, if we have $\dfrac{\partial S}{\partial t}=-f(\det{S})^{-\frac{1}{2}}\Om$, then $\dfrac{\partial (\det S)}{\partial t}=-f\dets^{-\frac{1}{2}}\mathrm{tr}(\tilde{S}\Om).$
\end{lemma}
 
Let $S=(S_{ij}) \in \Om^0(M;\psym3)$. Define $C=(C_{ij})$
 and $D=(D_{ij})$ by $C_{ij}=\e_{i\al\be}(\dets^{-\frac{1}{2}}\tilde{S}_{j\al})_{,\be}$ and $D_{ij}=\e_{\al ij}S_{\al\be,\be}$.
 
 \begin{lemma}\label{lem:5b}We have
 \begin{align*} 
     CS-SC=\dets^{-\frac{1}{2}}SDS.
 \end{align*}
 In particular, $CS-SC=0$ holds if $S_{i\al,\al}=0$ for $i=1,2,3$.
 \end{lemma}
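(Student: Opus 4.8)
The claim is a pointwise algebraic relation among the entries $S_{ij}$ and their first derivatives $S_{ij,k}$ (defined by $dS_{ij}=S_{ij,k}e^k$), so the plan is to prove it by a direct computation organized around the standard facts about the $3\times 3$ adjugate. Throughout I write $T:=\dets^{-\frac12}\tilde S=\dets^{\frac12}S^{-1}$, so that $C_{ij}=\e_{i\al\be}T_{j\al,\be}$ and $TS=ST=\dets^{\frac12}I$. The tools I expect to use are: the expansion $\tilde S_{ij}=\tfrac12\e_{i\al\be}\e_{jkl}S_{k\al}S_{l\be}$ and the determinant identity $\e_{\al\be\ga}S_{\al i}S_{\be j}S_{\ga k}=\dets\,\e_{ijk}$; the contracted Levi--Civita identity $\e_{ipq}\e_{jkq}=\de_{ij}\de_{pk}-\de_{ik}\de_{pj}$; the Leibniz rule $d\dets=\tilde S_{pq}S_{pq,k}\,e^k$, hence $(\dets^{\frac12})_{,k}=\tfrac12\dets^{-\frac12}\tilde S_{pq}S_{pq,k}$; and the dimension-$3$ identity $S[v]S=[\tilde Sv]$ valid for symmetric $S$, where $[v]$ denotes the skew matrix with entries $\e_{ikj}v_k$.

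First I would reduce to a convenient frame. Under a constant rotation $e^i\mapsto R^i{}_je^j$ with $R\in\so3$, together with $S\mapsto RS\,{}^tR$, one reads off from the definitions that $T\mapsto RT\,{}^tR$, $C\mapsto RC\,{}^tR$ and $D\mapsto RD\,{}^tR$, so that both $CS-SC$ and $\dets^{-\frac12}SDS$ transform by $X\mapsto RX\,{}^tR$; it therefore suffices to verify the identity at a single point after conjugating $S$ there to diagonal form, the derivatives $S_{ij,k}$ at that point remaining unconstrained. Next I would compute $CS$ and $SC$ separately. Differentiating $TS=\dets^{\frac12}I$ along $e^\be$ and using that $T$ is symmetric gives $T_{k\al,\be}S_{kj}=(\dets^{\frac12})_{,\be}\de_{\al j}-T_{\al k}S_{kj,\be}$, whence $(CS)_{ij}=\e_{ij\be}(\dets^{\frac12})_{,\be}-\e_{i\al\be}T_{\al k}S_{kj,\be}$. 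For $(SC)_{ij}=S_{ik}\e_{k\al\be}T_{j\al,\be}$ the contraction is awkward, since the index $k$ of $S$ meets the Levi--Civita symbol rather than an index of $T$; I would first move $S$ through the symbol via the identity $S[v]S=[\tilde Sv]$ and then expand in the same manner. Subtracting and collapsing the resulting products of Levi--Civita symbols with the identities above, I expect the $(\dets^{\frac12})_{,\be}$ contributions and all terms other than the one built from the row divergence $S_{\al\be,\be}$ to cancel, leaving, after one further use of $S[v]S=[\tilde Sv]$, exactly $\dets^{-\frac12}SDS$; with $S$ diagonal at the point this final bookkeeping becomes transparent.

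The main obstacle is precisely this subtraction-and-collapse step: several terms that are quadratic in $S$ and linear in $dS$ occur, and the content of the lemma is that all of them except the $S_{\al\be,\be}$ combination cancel pairwise, for which the expansion of $\tilde S$ and the determinant identity $\e_{\al\be\ga}S_{\al i}S_{\be j}S_{\ga k}=\dets\,\e_{ijk}$ are the essential tools. Finally, the ``in particular'' is immediate: if $S_{i\al,\al}=0$ for $i=1,2,3$ then $D=(\e_{\al ij}S_{\al\be,\be})=0$, so $\dets^{-\frac12}SDS=0$ and hence $CS-SC=0$.
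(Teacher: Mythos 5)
Your route differs genuinely from the paper's. The paper forms the $\fso3$-valued $1$-form $b=\dets^{-\frac12}\tilde S_{ij}e^jY_i$, evaluates $\frac12 d[b\w b]=[db\w b]$ in two ways, and reads off $\e_{i\al\be}\dets^{-\frac12}\tilde S_{\al\ga}C_{\ga\be}=S_{i\al,\al}$ from the coefficient of $e^{123}Y_i$; you propose a direct index computation instead. Your preparatory steps are fine: the formula $(CS)_{ij}=\e_{ij\be}(\dets^{\frac12})_{,\be}-\e_{i\al\be}T_{\al k}S_{kj,\be}$ is correct, as are the auxiliary identities and the equivariance reduction to diagonal $S$. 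The problem is that your argument stops exactly where the content is --- the ``subtraction-and-collapse'' is only asserted --- and that step cannot succeed as described, because what such a computation controls is the antisymmetric part of $CS$, namely $CS-S\,{}^tC$, and not the commutator $CS-SC$; since $C$ is not symmetric these differ. Concretely, take $e^i=dx^i$ and $S=s\,I$ with $ds\neq0$: then $C_{ij}=\e_{ij\be}(s^{\frac12})_{,\be}$ is antisymmetric, so $CS-SC=0$, while $D_{ij}=\e_{\al ij}s_{,\al}\neq0$ and $\dets^{-\frac12}SDS=s^{\frac12}D\neq0$. Your own diagonalization would have exposed this: for $S=\mathrm{diag}(s_1,s_2,s_3)$ one has $(CS-SC)_{ij}=(s_j-s_i)C_{ij}$ (no sum), which vanishes whenever two eigenvalues coincide, whereas the right-hand side need not. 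The identity your computation actually yields is $CS-S\,{}^tC=\dets^{-\frac12}SDS$.

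In fairness, the paper's final line makes the same transpose slip: the relation $\e_{i\al\be}\dets^{-\frac12}\tilde S_{\al\ga}C_{\ga\be}=S_{i\al,\al}$ determines only $\tilde SC-{}^t(\tilde SC)=\tilde SC-{}^tC\tilde S$, not $\tilde SC-C\tilde S$, so the conclusion should read $\dets^{\frac12}(CS-S\,{}^tC)=SDS$. This corrected form is in fact what the applications require: under the constraint $S_{i\al,\al}=0$ one gets $D=0$, hence $CS={}^t(CS)$ and therefore $\e_{i\al\be}(CS)_{\al\be}=0$, which is the statement used in the proofs of Propositions \ref{prop:loc_t3_1} and \ref{prop:6_3} (note that $CS=SC$ alone would not give this). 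So: correct the target to $CS-S\,{}^tC$, and then actually carry out the cancellation, for which either the paper's $[db\w b]$ device or your diagonal-frame computation is adequate.
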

 
 \begin{proof}
 Take a basis $\{Y_1,Y_2,Y_3\}$ of $\fso3$ satisfying $[Y_i,Y_j]=\e_{kij}Y_k$ for $i,j=1,2,3$ and set $b=\dets^{-\frac{1}{2}}\tilde{S}_{ij}e^jY_i$. Without loss of generality, we can assume $de=0$. Then we have 
 \begin{align}
     [b\w b] &={} [\dets^{-\frac{1}{2}}\tilde{S}_{\al\be}e^{\be}Y_{\al}\w \dets^{-\frac{1}{2}}\tilde{S}_{\ga\de}e^{\de}Y_{\ga}] \no\\
     &={} \dets^{-1}\e_{j\be\de}\e_{i\al\ga}\tilde{S}_{\al\be}\tilde{S}_{\ga\de}\hat{e}^jY_i \no\\
     &={} 2S_{ji}\hat{e}^jY_i\no.
     \end{align}
Here we use $\tilde{Q}_{ij} = \frac{1}{2}\sum_{k,l,\alpha,\beta}\epsilon_{i\alpha\beta}\epsilon_{jkl}Q_{k\alpha}Q_{l\beta}$ for any $3\times3$ matrix $Q$. Thus 
\begin{align} \label{eq:5_24}
\frac{1}{2}d[b\w b]=S_{i\al,\al}e^{123}Y_i.
\end{align}
On the other hand, we have 
\begin{align}
    \frac{1}{2}d[b\w b]&={} [db\w b]  \label{eq:5_25} \\
    &={} [(\dets^{-\frac{1}{2}}\tilde{S}_{\al\be})_{,k}e^{k\be}Y_{\al} \w\dets^{-\frac{1}{2}}\tilde{S}_{\ga\de}e^{\de}Y_{\ga} ] \no \\
    &={} [\e_{jk\be}(\dets^{-\frac{1}{2}}\tilde{S}_{\al\be})_{,k}\hat{e}^jY_{\al} \w \dets^{-\frac{1}{2}}\tilde{S}_{\ga\de}e^{\de}Y_{\ga}] \no\\
    &={} \e_{i\al\ga}\e_{jk\be}\dets^{-\frac{1}{2}}\tilde{S}_{\ga j}(\dets^{-\frac{1}{2}}\tilde{S}_{\al\be})_{,k}e^{123}Y_i \no \\
    &={} \e_{i\al\be}\dets^{-\frac{1}{2}}\tilde{S}_{\al\ga}C_{\ga\be}e^{123}Y_i.\no
\end{align}
By (\ref{eq:5_24}) and (\ref{eq:5_25}), we get $\e_{i\al\be}\dets^{-\frac{1}{2}}\tilde{S}_{\al\ga}C_{\ga\be}=S_{i\al,\al}$ for $i=1,2,3$, implying
$
    \dets^{-\frac{1}{2}}(\tilde{S}C-C\tilde{S})=D.$
Hence $\dets^{\frac{1}{2}}(CS-SC)=SDS$.
 \end{proof}

\begin{proof}[Proof of Proposition \ref{prop:loc_t3_1}]
Denote by $\dot{\s}$ the derivative $\dfrac{\partial \s}{\partial t}$ of a differential form $\s$ and set $\dot{e}^i=p_{ij}\hat{e}^j$ and $\dot{a}^i=q_{ij}\hat{e}^j$ for $i=1,2,3.$ 

First the $G_2$-structure $\om(t)f(t)dt+\psi(t)$ is torsion-free if and only if the one-parameter family $(\om(t),\psi(t),f(t))$ satisfies the half-flat condition, $\dfrac{\partial \psi}{\partial t}=d(f\om)$ and $\dfrac{\partial}{\partial t}\left(\dfrac{1}{2}\om\w\om\right)=d(f\psi^{\#})$ for each $t$ as in Proposition \ref{prop:flow}.

Let us rewrite the conditions in terms of the one-parameter family $(e(t),a(t),S(t),f(t))$ corresponding to $(\om(t),\psi(t),f(t))$. By Corollary \ref{cor:4a}, the half-flat condition is equivalent to the following:
\begin{align}\label{eq:5_26}
    de^i=0,\quad \Om_{ij}=\Om_{ji} \quad{and}\quad S_{i\al,\al}=0
\end{align}
for $i,j=1,2,3.$ By (\ref{eq:su3d}), we have 
\begin{align}
\label{eq:5_27}\frac{\partial \psi}{\partial t}&={}\frac{\partial }{\partial t}\left(-(\det{S})e^{123}+e^i\hat{a}^i\right)                \\
&={} -\frac{\partial}{\partial t}(\det S)e^{123}-(\det S)\dot{e}^i\hat{e}^i+\ep_{k\al i}\ep_{k\be j}q_{\al\be}a^i\hat{e}^j + \hat{a}^i\dot{e}^i \no\\
&={} -(\pt{\dets}{t}+\dets p_{\al\al})e^{123}+(\de_{ij}q_{\al\al}-q_{ji})a^i\hat{e}^j+p_{ij}\hat{a}^ie^j. \no
\end{align}
Here we use 
\begin{align*}
\frac{\partial }{\partial t}(e^{123})= \dot{e}^k\hat{e}^k,\quad \frac{\partial \hat{a}^i}{\partial t}=\ep_{i\al\be}\dot{a}^{\al}a^{\be}\quad{and}\quad \ep_{k\al i}\ep_{k\be j}q_{\al\be}=\de_{ij}q_{\al\al}-q_{ji}
\end{align*}
for $i,j=1,2,3$. By (\ref{eq:su3c}), we have 
\begin{align}
    \frac{\partial}{\partial t}\left(\frac{1}{2}\om \we \om\right)&={}
\frac{\partial}{\partial t}\left(-S_{ij}\hat{a}^i\hat{e}^j\right) \label{eq:5_28}\\
&={}-\ep_{i\al\be}q_{\al\ga}S_{\ga\be}a^ie^{123} - \frac{\partial S_{ij}}{\partial t}\hat{a}^i\hat{e}^j - S_{ij}\hat{a}^i\ep_{j\al\be}\dot{e}^{\al}e^{\be} \no\\
&={} -\e_{i\al\be}q_{\al\ga}S_{\al\be}a^ie^{123}+(S_{i\al}p_{j\al}-S_{ij}p_{\al\al}-\pt{S_{ij}}{t})\hat{a}^i
\hat{e}^j   \no.
\end{align}
Here we use $ -\e_{\ga\al\be}\e_{j\de\be}S_{i\ga}p_{\al\de}=S_{i\al}p_{j\al}-S_{ij}p_{\al\al}$
for $i,j=1.2.3.$ Moreover, by Proposition \ref{prop:4b} we have 
\begin{align}
 \label{eq:5_29}   d(f\om)&={} d(f\dets^{-\frac{1}{2}}\tilde{S}_{ij}a^ie^j) \\
    &={} f\dets^{-\frac{1}{2}}\tilde{S}_{\al\be}\Om_{\al\be}e^{123}\no\\ &{}{}\quad+\left(\e_{j\al\be}(f\dets^{-\frac{1}{2}}\tilde{S}_{i\al})_{,\be}-f\dets^{-\frac{1}{2}}\tilde{S}_{i\al}T_{\al j}\right)a^i\hat{e}^j, \no \\
 \label{eq:5_30}   d(f\psi^{\#})&={}  d(f\dets^{-\frac{1}{2}}a^{123}-f\dets^{\frac{1}{2}}a^k\hat{e}^k) \\
   \no &={} \left((f\dets^{\frac{1}{2}})_{,i}+f\dets^{\frac{1}{2}}\e_{i\al\be}T_{\al\be}\right)a^ie^{123} \\
   \no &{}\quad+ f\dets^{-\frac{1}{2}}\Om_{ij}\hat{a}^i\hat{e}^j -(f\dets^{-\frac{1}{2}})_{,i}a^{123}e^i.
\end{align} 
By (\ref{eq:5_27}) and (\ref{eq:5_29}), $\dfrac{\partial \psi}{\partial t}=d(f\om)$ is equivalent to the following:
\begin{align}
\label{eq:5_31}&\pt{\dets}{t}=-f\dets^{-\frac{1}{2}}\tilde{S}_{\al\be}\Om_{\al\be},\quad\pt{e^i}{t}=p_{ik}\hat{e}^k=0, \\ 
\label{eq:5_32}&\de_{ij}q_{\al\al}-q_{ji}=\e_{j\al\be}(f\dets^{-\frac{1}{2}}\tilde{S}_{i\al})_{,\be}-f\dets^{-\frac{1}{2}}\tilde{S}_{i\al}T_{\al j}
\end{align}
for $i,j=1,2,3.$ Assuming 
$T=0$  combined with $S={}^tS$, (\ref{eq:5_32}) is equivalent to 
\begin{align}
 \label{eq:5_33}   q_{ij}&=-\e_{i\al\be}(f\dets^{-\frac{1}{2}}\tilde{S}_{j\al})_{,\be} \\
 \no&= -\e_{i\al\be}f_{,\be}\dets^{-\frac{1}{2}}\tilde{S}_{j\al}-fC_{ij}
\end{align}
for $i,j=1,2,3$, where we use the notation of Lemma \ref{lem:5b}. By (\ref{eq:5_28}) and (\ref{eq:5_30}), $\dfrac{\partial}{\partial t}\left(\dfrac{1}{2}\om\w\om\right)=d(f\psi^{\#})$ is equivalent to the following:
\begin{align}
\label{eq:5_34}    &-\e_{i\al\be}q_{\al\ga}S_{\ga\be}=(f\dets^{\frac{1}{2}})_{,i}+f\dets^{\frac{1}{2}}\e_{i\al\be}T_{\al\be},\\
 \label{eq:5_35}  & S_{i\al}p_{j\al}-S_{ij}p_{\al\al}-\pt{S_{ij}}{t}=f\dets^{-\frac{1}{2}}\Om_{ij},\\
 \label{eq:5_36}  & (f\dets^{-\frac{1}{2}})_{,i}=0
\end{align}
for $i,j=1,2,3.$ Thus, using Lemma \ref{lem:5a} and \ref{lem:5b} and summarizing (\ref{eq:5_26}), (\ref{eq:5_31}), (\ref{eq:5_32}) and (\ref{eq:5_34})--(\ref{eq:5_36}), we obtain the conditions in Proposition \ref{prop:loc_t3_1}.
\end{proof}

Now we can immediately prove Theorem \ref{thm:a} and \ref{thm:a1}.

\begin{proof}[Proof of Theorem \ref{thm:a} and \ref{thm:a1}]
By setting $f=(\det S)^{\frac{1}{2}}$ in Proposition \ref{prop:loc_t3_1}, we obtain Theorem \ref{thm:a}. 
Moreover, by scaling of the parameter $t$, we can deduce Theorem \ref{thm:a1} from Proposition \ref{prop:red_1} and \ref{prop:loc_t3_1}.
\end{proof}

\begin{remark}
We can easily check that the the solutions of (\ref{eq:t1}) preserve the conditions (\ref{eq:t2}) in Theorem \ref{thm:a}. For example, we have 
\begin{align*}
    \pt{S_{i\al,\al}}{t}=\left(\pt{S_{i\al}}{t}\right)_{,\al}=-\Om_{i\al,\al}=0.
\end{align*}
\end{remark}

\begin{remark}\label{rem:ms}
Theorem \ref{thm:a} is essentially equivalent to Theorem 3.5 of \cite{MS}. Their equations of torsion-free conditions for toric $G_2$-manifolds (\cite{MS}, 3.10) is deduced from (\ref{eq:t1}) as follows. This step reminds us of the derivation of the equations of electromagnetic wave from Maxwell's equations. We have 
\begin{align}\label{eq:5_c}
    \frac{\partial \Om_{ij}}{\partial t}= \ep_{i\al\be}\ep_{j\gamma\delta}\frac{\partial^2\tilde{S}_{\al\gamma}}{\partial x^{\be}x^{\delta}}
\end{align}
since 
\begin{align*}
    \pt{da^i}{t}=d\left(\pt{a^i}{t}\right)
\end{align*}
for $i=1,2,3$. Thus, by (\ref{eq:t1}) and (\ref{eq:5_c}), we have 
\begin{align*}
    \frac{\partial^2 S_{ij}}{\partial t^2} =-\ep_{i\al\be}\ep_{j\gamma\delta}\frac{\partial^2\tilde{S}_{\al\ga}}{\partial x^{\be}x^{\delta}}
\end{align*}
for $i,j=1,2,3.$ This is the same as the second-order partial differential equation (\cite{MS}, 3.10). Moreover explicit examples are presented in the paper.
\end{remark}

\section{The case of $\so3$-fibrations}\label{sec:so3}

In this section, we study the case of $\so3$-fibrations. Most calculations are similar to the case of $\rm{T}^3$-fibrations, but in this case we need to use covariant derivation.

Let $G=\so3$, and $P\to M$  a principal $\so3$-bundle over a  3-manifold $M$. Fix a basis $\{Y_1,Y_2,Y_3\}$ of $\fso3$ satisfying $[Y_i,Y_j]=\e_{\al ij}Y_{\al}$ for $i,j=1,2,3.$ 

\subsection{Torsion of $\su3$-structures in the case of $G=\so3$}\label{subsec:6_1}

As have seen in section \ref{sec:slag}, every $\so3$-invariant sLag $\su3$-structure $(\om,\psi)$ on $P$ uniquely decomposes into a triple $(e,a,S)$ of a solder 1-form $e=e^iY_i \in \Om^1(P;\fso3)^{\so3}_{hor}$, a connection 1-form $a=a^iY_i\in \Om^1(P; \fso3)^{\so3}$ and an equivariant $\psym3$-valued function $S=(S_{ij})\in \Om^0(P;\psym3)^{\so3}$.

\begin{remark}
Note that if there exists an $\so3$-invariant sLag $\su3$-structure then $P$ is a trivial $\so3$-bundle over $M$. This is because $P$ is isomorphic to the orthonormal frame bundle over $M$ by the solder 1-form $e$, and any orientable 3-manifolds are parallelizable.
\end{remark}

Let $(\om,\psi)$ be an $\so3$-invariant $\su3$-structure on $P$, and $(e,a,S)$ the triple corresponding to $(\om,\psi)$. Denote by $T=(T_{ij})$ and $\Om=(\Om_{ij})$ the torsion and curvature forms of $(e,a)$: $d_He=de+[a\w e]=T_{ij}\hat{e}^jY_i$ and $d_Ha=da+\dfrac{1}{2}[a\w a]=\Om_{ij}\hat{e}^jY_i$. Also for an equivariant matrix-valued function $A=(A_{ij}) \in \Om^0(P;\mathrm{M}(k;\R))^{\so3}$, denote by $A_{ij;k}$ the covariant derivative: $(d_HA)_{ij}=(dA+a^k[Y_k, A])_{ij}=A_{ij;k}e^k$.

We use the following formulas. The proof is straightforward.

\begin{lemma}\label{lem:6a} Let $A=(A_{ij})\in \Om^0(P;\sym3)^{\so3}$. We have
\begin{align*}
&de^i=T_{i\al}\hat{e}^{\al}-\ep_{i\al\be}a^{\al}e^{\be},\quad da^i=\Om_{i\al}\hat{e}^{\al} - \hat{a}^i ,\\
&d\hat{e}^i=\ep_{i\al\be}T_{\al\be}e^{123}-\ep_{i\al\be}a^{\al}\hat{e}^{\be},\quad
d\hat{a}^i=\ep_{i\al\be}\Om_{\al\gamma}\hat{e}^{\gamma}a^{\be}, \\
&dA_{ij}=A_{ij;\al}e^{\al}+(\ep_{\al i \be}A_{\be j}+\ep_{\al j\be}A_{\be i})a^{\al}
\end{align*}
for $i,j=1,2,3.$
\end{lemma}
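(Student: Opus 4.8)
The claim to prove is Lemma~\ref{lem:6a}, which records the exterior derivatives of $e^i,a^i,\hat e^i,\hat a^i$ and of an equivariant matrix-valued function $A$, for the case $G=\so3$. The plan is to expand each identity directly from the structure equations, treating the $\so3$-part of the exterior derivative and the horizontal part separately. First I would fix the basis $\{Y_1,Y_2,Y_3\}$ with $[Y_i,Y_j]=\e_{\al ij}Y_\al$ and recall that by construction $e=e^iY_i$ and $a=a^iY_i$, with $a$ a connection $1$-form, so that $da^i$ is determined by the curvature equation $d_Ha=da+\tfrac12[a\w a]=\Om_{ij}\hat e^jY_i$. Writing out $\tfrac12[a\w a]=\tfrac12\e_{i\al\be}a^\al a^\be Y_i=\tfrac12\e_{i\al\be}a^{\al\be}Y_i=\hat a^iY_i$ gives $da^i=\Om_{i\al}\hat e^\al-\hat a^i$, which is the second stated formula; the first, $de^i=T_{i\al}\hat e^\al-\e_{i\al\be}a^\al e^\be$, comes the same way from $d_He=de+[a\w e]=T_{ij}\hat e^jY_i$ once one notes $[a\w e]=\e_{i\al\be}a^\al e^\be Y_i$.

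For the formulas for $d\hat e^i$ and $d\hat a^i$, I would differentiate $\hat e^i=\tfrac12\e_{ijk}e^{jk}$ and $\hat a^i=\tfrac12\e_{ijk}a^{jk}$ using the already-derived expressions for $de^i$ and $da^i$, the Leibniz rule on $2$-forms, and the identity $e^{123}=e^k\hat e^k$ (and similarly that products like $\hat e^j e^k$ collapse via the $\e$-contractions). The curvature identity $\Om_{ij}=\Om_{ji}$ is \emph{not} assumed here, so I will keep $\Om$ general; the contraction $\e_{ijk}\e_{j\al\be}$ should be handled with the standard $\e$-$\de$ identity, and the $\hat a^i$ term dropping out of $d\hat e^i$ should be checked carefully (it contributes a term proportional to $\e_{ijk}\hat a^j e^k$, which vanishes or is absorbed—this is the one spot to be careful). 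Finally, for $dA_{ij}$ with $A$ equivariant of weight two under $\so3$, I would use that equivariance forces the "vertical" derivative to be the infinitesimal coadjoint-type action: along $X_k^*$ one has $X_k^*(A_{ij})=-(\e_{ki\be}A_{\be j}+\e_{kj\be}A_{\be i})$, so decomposing $dA_{ij}$ into its horizontal part $A_{ij;\al}e^\al$ (the definition of the covariant derivative) plus its vertical part $a^\al X_\al^*(A_{ij})$ yields the stated $dA_{ij}=A_{ij;\al}e^\al+(\e_{\al i\be}A_{\be j}+\e_{\al j\be}A_{\be i})a^\al$.

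The main obstacle—though it is really just bookkeeping—will be keeping the sign conventions and the placement of indices in the $\e$-symbols consistent between the bracket relation $[Y_i,Y_j]=\e_{\al ij}Y_\al$ (note the unusual index order), the connection/curvature equations, and the equivariance weights; a single transposed pair of indices in $\e$ flips a sign and breaks the identity. I would therefore verify one nontrivial case (say $d\hat e^1$) fully by hand as a sanity check, and then argue the rest follow by the cyclic symmetry of the index $1,2,3$. All of this is "straightforward" in the sense the paper claims, so the proof will consist of recording these expansions rather than any conceptual step.
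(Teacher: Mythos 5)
Your proposal is correct and takes essentially the same route as the paper, whose entire proof of Lemma \ref{lem:6a} is the remark that the computation is straightforward: expanding $d_He$, $d_Ha$ and $d_HA$ from their definitions, differentiating $\hat e^i=\tfrac12\e_{ijk}e^{jk}$ and $\hat a^i=\tfrac12\e_{ijk}a^{jk}$ with the $\e$--$\de$ contraction identity, and reading off the vertical part of $dA_{ij}$ from equivariance is exactly that computation. One internal sign slip to fix before writing it up: with the paper's conventions the equivariance $R_g^*A=\mathrm{Ad}(g)^{-1}A\,({}^t\mathrm{Ad}(g))^{-1}$ gives $Y_\al^*(A_{ij})=-(\mathrm{ad}(Y_\al)A+A\,{}^t\mathrm{ad}(Y_\al))_{ij}=+\bigl(\e_{\al i\be}A_{\be j}+\e_{\al j\be}A_{\be i}\bigr)$ (since $(\mathrm{ad}\,Y_\al)_{i\be}=\e_{i\al\be}$), so the vertical derivative carries a plus sign, not the minus sign in your intermediate formula; as written your decomposition $dA_{ij}=A_{ij;\al}e^\al+a^\al Y_\al^*(A_{ij})$ would then produce the wrong sign on the $a^\al$ term relative to the statement.
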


By using (\ref{eq:su3c})--(\ref{eq:su3f}) and Lemma \ref{lem:6a}, we obtain

\begin{prop}\label{prop:6b}
Let $(e,a,S)$ be the triple corresponding to $(\om,\psi)$. Then we have
\begin{align}
\label{eq:6_1}&d\omega =\left(\e_{j\al\be}(\dets^{-\frac{1}{2}}\tilde{S}_{i\al})_{;\be} - \dets^{-\frac{1}{2}}\tilde{S}_{i\al}T_{\al j}\right)a^i\hat{e}^j + \dets^{-\frac{1}{2}}\tilde{S}_{ij}\Om_{ij}e^{123}\\
\no&{}\quad\quad +\dets^{-\frac{1}{2}}\tilde{S}_{ij}\hat{a}^ie^j,\\
\no&d\left(\frac{1}{2}\om \w \om\right) = -\left(S_{i\al,\al}+\e_{\al\be\ga}S_{i\al}T_{\be\ga}\right)\hat{a}^ie^{123} ,\\
\no&d\psi = T_{ij}\hat{a}^i\hat{e}^j - \e_{i\al\be}\Om_{\al\be}a^ie^{123},\\
\label{eq:6_4}&d\psi^{\#} = d\left(\dets^{-\frac{1}{2}}\right)a^{123} + \dets^{-\frac{1}{2}}\Om_{ij}\hat{a}^i\hat{e}^j  \\ 
&{}\quad\quad\quad - d\left(\dets^{\frac{1}{2}}\right)a^k\hat{e}^k + \dets^{\frac{1}{2}}\e_{i\al\be}T_{\al\be}a^ie^{123}-\dets^{\frac{1}{2}}\hat{a}^k\hat{e}^k.\no
\end{align}
\end{prop}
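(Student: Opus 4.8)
The plan is to prove Proposition \ref{prop:6b} by direct computation, exactly parallel to the proof of Proposition \ref{prop:4b}, with the ordinary exterior derivatives replaced by covariant ones. The starting point is the pair of explicit expressions (\ref{eq:su3c})--(\ref{eq:su3f}) for $\om$, $\psi$, $\psi^{\#}$ and $\tfrac{1}{2}\om\w\om$ in terms of the triple $(e,a,S)$, together with the differentiation rules collected in Lemma \ref{lem:6a}. Concretely, I would write each of the four forms as a polynomial in $a^i$, $\hat a^i$, $e^i$, $\hat e^i$ with coefficients built from $S$, $\tilde S$ and $\dets^{\pm 1/2}$, apply $d$ termwise using the Leibniz rule, and substitute the formulas of Lemma \ref{lem:6a} for $de^i$, $da^i$, $d\hat e^i$, $d\hat a^i$ and $dS_{ij}$.

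The key organizational point, which is what makes the computation manageable, is that every term that involves an explicit connection factor $a^\al$ coming from the last summand of the $dA_{ij}$ rule, from the $-\e_{i\al\be}a^\al e^\be$ term in $de^i$, and from the $-\hat a^i$ and $-\e_{i\al\be}a^\al\hat e^\be$ terms in $da^i$ and $d\hat e^i$ must either cancel among themselves or reassemble into the covariant derivative $S_{ij;k}$ and into the genuinely new curvature-type terms $\dets^{-1/2}\tilde S_{ij}\hat a^i e^j$ in (\ref{eq:6_1}) and $-\dets^{1/2}\hat a^k\hat e^k$ in (\ref{eq:6_4}). So the bookkeeping splits naturally into two streams: (i) the ``$\T3$-like'' terms, whose combination is literally identical to Proposition \ref{prop:4b} after relabelling $e^i\mapsto e^i$, and checking this requires nothing new beyond what was done there; and (ii) the ``extra'' terms proportional to $a^\al$ with no $\hat e$ absorbing it, which I would group by type of differential form ($a^ie^{123}$, $\hat a^i\hat e^j$, $a^ie^j$, $a^{123}e^i$, etc.) and show that, by the structure constants $[Y_i,Y_j]=\e_{\al ij}Y_\al$ and the identity $\e_{\al i\be}S_{\be j}+\e_{\al j\be}S_{\be i}$ being exactly the infinitesimal action of $\fso3$ on $\sym3$, they collapse to the two stated curvature terms plus the replacement of the partial derivatives $S_{ij,k}$ by covariant ones $S_{ij;k}$.

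The only genuinely delicate point, and the step I expect to be the main obstacle, is the verification for $d\psi^\#$: here one must differentiate $\dets^{\pm 1/2}a^{123}$ and $\dets^{1/2}a^k\hat e^k$, and the term $d(a^{123})=\sum_k (-1)^{k-1} da^k\, a^{\,\neq k}$ produces, via the $-\hat a^i$ piece of $da^i$ in Lemma \ref{lem:6a}, a contribution $\dets^{-1/2}a^k\hat a^k\cdot(\text{stuff})$ that has to be matched carefully against the new $-\dets^{1/2}\hat a^k\hat e^k$ term and against the $-\e_{i\al\be}a^\al\hat e^\be$ corrections hidden inside $d(a^k\hat e^k)$. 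Keeping the signs straight when reordering $a^i\hat a^j$ and $\hat a^i\hat e^j$ wedge products, and correctly expanding the Levi-Civita contractions such as $\e_{i\al\ga}\e_{jk\be}$ into Kronecker deltas, is where errors are most likely; I would handle it by fixing once and for all the orientation convention $\hat c^i=\tfrac12\e_{ijk}c^{jk}$, recording the contraction identities $\e_{ijk}\e_{ilm}=\de_{jl}\de_{km}-\de_{jm}\de_{kl}$ and $e^i\hat e^j=\de_{ij}e^{123}$ at the outset, and then proceeding mechanically. Everything else reduces to the computations already carried out for $G=\T3$, so after isolating and checking these finitely many extra terms the proposition follows.
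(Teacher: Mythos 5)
Your proposal is correct and follows essentially the same route as the paper, which likewise obtains Proposition \ref{prop:6b} by direct termwise differentiation of (\ref{eq:su3c})--(\ref{eq:su3f}) using the formulas of Lemma \ref{lem:6a}. Your observation that the only genuinely new contributions are the terms $\dets^{-\frac{1}{2}}\tilde{S}_{ij}\hat{a}^ie^j$ and $-\dets^{\frac{1}{2}}\hat{a}^k\hat{e}^k$ coincides exactly with the remark the paper makes immediately after the proposition.
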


\begin{remark}The last terms in (\ref{eq:6_1}) and (\ref{eq:6_4}) are the only terms different from Proposition \ref{prop:4b} in the case of $G=\T3$. The non-commutativity of $\so3$ appears at these points.
\end{remark} 

We have the following corollaries to Proposition \ref{prop:6b}.

\begin{cor}\label{cor:6a} Let $(e,a,S)$ be the triple corresponding to $(\om,\psi)$. 
\begin{enumerate}

\item The $\su3$-structure $(\om,\psi)$ is half-flat if and only if $T_{ij}=0$  and $S_{ik;k}=0$ for $i,j=1,2,3$.

\item If $d\psi=d\psi^{\#}=0$ holds, then the Riemannian metric on $M$ given by the solder $1$-form $e$ is constant negative curvature. 

\item The $\su3$-structure $(\om,\psi)$ is not symplectic.

\end{enumerate}
\end{cor}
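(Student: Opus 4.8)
The plan is to read off each assertion from the explicit formulas in Proposition \ref{prop:6b}, together with the defining formulas \eqref{eq:su3c}--\eqref{eq:su3f}. For the half-flat characterization in part (1), recall that $(\om,\psi)$ is half-flat iff $d\psi=0$ and $d(\om\w\om)=0$. From Proposition \ref{prop:6b} we have $d\psi = T_{ij}\hat{a}^i\hat{e}^j - \e_{i\al\be}\Om_{\al\be}a^ie^{123}$. The 1-forms $\{a^1,a^2,a^3,e^1,e^2,e^3\}$ are pointwise linearly independent (they form a coframe up to the invertible change by $Q$), so the monomials $\hat a^i\hat e^j$ and $a^i e^{123}$ are linearly independent; hence $d\psi=0$ forces $T_{ij}=0$ for all $i,j$ and $\e_{i\al\be}\Om_{\al\be}=0$, i.e.\ $\Om$ is symmetric. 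Next, $d(\tfrac12\om\w\om) = -(S_{i\al,\al}+\e_{\al\be\ga}S_{i\al}T_{\be\ga})\hat a^i e^{123}$; once $T=0$ this reduces to $-S_{i\al,\al}\,\hat a^i e^{123}$. Here one must check that the ordinary partial-derivative expression $S_{i\al,\al}$ that appears in Proposition \ref{prop:6b} coincides with the covariant expression $S_{i\al;\al}$ once $T=0$ and $\Om$ is symmetric — this is exactly the content of Lemma \ref{lem:6a}, since the connection terms in $dA_{ij}$ contracted against $\hat a^i e^{123}$ combine with the skew part and vanish under the stated conditions; so the half-flat condition becomes $T_{ij}=0$ and $S_{i\al;\al}=0$, as claimed. (The symmetry of $\Om$ is then automatic: with $T=0$ and $d_He=0$ the curvature of the Levi-Civita connection is symmetric, cf.\ the Remark after Theorem \ref{thm:b}.)

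For part (2), assume $d\psi=d\psi^\#=0$. From part (1) (using just $d\psi=0$) we already get $T=0$, so $d_He=0$, meaning $a$ is the Levi-Civita connection of the metric $e=e^iY_i$ on $M$ and $\Om_{ij}\hat e^j Y_i$ is its curvature 2-form; by the Remark following Theorem \ref{thm:b}, $\Om_{ij}e^i\otimes e^j$ is the Einstein tensor. Now impose $d\psi^\#=0$. Feeding $T=0$ into the expression \eqref{eq:6_4} for $d\psi^\#$ leaves
\begin{align*}
d\psi^\# = d(\dets^{-\frac12})a^{123} + \dets^{-\frac12}\Om_{ij}\hat a^i\hat e^j - d(\dets^{\frac12})a^k\hat e^k - \dets^{\frac12}\hat a^k\hat e^k.
\end{align*}
The four groups of monomials $a^{123}$, $\hat a^i\hat e^j$, $a^k\hat e^k$ are again independent, so vanishing forces $d(\det S)=0$, $d(\det S)=0$ (same), and $\dets^{-\frac12}\Om_{ij} = \dets^{\frac12}\de_{ij}$, i.e.\ $\Om_{ij} = (\det S)\,\de_{ij}$. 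Thus the Einstein tensor is a (constant, since $d(\det S)=0$) multiple of the metric, and tracing one sees the sectional curvature is the constant $-\det S<0$; this is the claimed constant negative curvature.

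For part (3), "not symplectic" means $d\om\neq 0$. Inspect the formula \eqref{eq:6_1} for $d\om$: the last term is $\dets^{-\frac12}\tilde S_{ij}\hat a^i e^j$, and this monomial type $\hat a^i e^j$ does not appear in any of the other terms of $d\om$ (which involve $a^i\hat e^j$ and $e^{123}$). Since $\tilde S = (\det S)S^{-1}$ is invertible and $(\det S)^{-1/2}>0$, the coefficient $(\det S)^{-1/2}\tilde S_{ij}$ is never the zero matrix, so the $\hat a^i e^j$-part of $d\om$ is nonzero; hence $d\om\neq 0$. The one point to be careful about is the linear independence of the differential-form monomials: this is where I expect the only real work, and it follows because $\{a^i\}$ spans the vertical cotangent directions and $\{e^i\}$ the horizontal ones, so products of distinct types ($\hat a^i e^j$, $a^i\hat e^j$, $\hat a^i\hat e^j$, $a^{123}$, $e^{123}$) live in complementary subspaces of $\Lambda^\bullet T^*P$; spelling this out (or simply passing to the orthonormal coframe $\{\sum_j Q^{ij}a^j, \det(Q)\sum_j Q^{ij}e^j\}$ from the end of Section \ref{sec:slag}) makes each vanishing statement a genuine equality of coefficient arrays, which is exactly what the three parts assert.
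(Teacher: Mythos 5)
Your proof is correct and follows essentially the route the paper intends: each statement is read off from the vertical/horizontal type decomposition of the formulas in Proposition \ref{prop:6b}, with the symmetry of $\Om$ recovered automatically from the first Bianchi identity once $T=0$ (so it need not appear as a separate half-flat condition, unlike the $\T3$ case), and with the Einstein-tensor identification $\Om_{ij}=(\det S)\de_{ij}$ tracing out to constant sectional curvature $-\det S<0$ in part (2). Your reading of the comma in $S_{i\al,\al}$ as the covariant contraction $S_{i\al;\al}$ is the right one — the vertical part of $dS_{ij}$ from Lemma \ref{lem:6a} contributes only terms that cancel by the symmetry of $S$ against the antisymmetry of $\e$, so the coefficient of $\hat a^i e^{123}$ is indeed the covariant divergence. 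The one place you genuinely diverge from the paper is part (3): the paper's remark derives non-symplecticity from the Liouville--Arnold theorem (a compact fiber of a Lagrangian fibration of a symplectic manifold must be a torus, and $\so3$ is not), whereas you isolate the $(2,1)$-type component $(\det S)^{-\frac{1}{2}}\tilde S_{ij}\hat a^i e^j$ of $d\om$, which is nowhere zero since $\tilde S$ is invertible. Both arguments are valid; yours is more self-contained and stays within the computational framework of Proposition \ref{prop:6b}, while the paper's explains conceptually why only the torus case can be symplectic.
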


\begin{remark}
If $d_He=T_{ij}\hat{e}^jY_i=0$, then the connection $a$ is the Levi-Civita connection of the solder 1-form $e$. Then the Bianchi identity $[d_Ha\w e]=d_Hd_He=0$ holds. Thus $\Om_{ij}=\Om_{ji}$ holds for $i,j=1,2,3.$ Also ${\Om_{ij}}$ coincides with the orthonormal representation of the Einstein-tensor of the Riemannian metric defined by the local coframe $\{e^1,e^2,e^3\}$.
\end{remark}

\begin{remark}
The third statement of Corollary \ref{cor:6a} is immediately derived from the Liouville - Arnold theorem, which implies compact fibers of Lagrangian fibrations of symplectic manifolds are tori.
\end{remark}

\subsection{Local structure of $\so3$-invariant $G_2$-manifolds}

In this subsection, we prove that all torsion-free $\so3$-invariant Lag $G_2$-structures are locally described by orbits of constrained dynamical systems on the spaces of triples $(e,a,S)$. 

As have seen in Section \ref{sec:red}, every {\it coclosed} $\so3$-invariant Lag $G_2$-structure is locally isomorphic to $\om(t)\w f(t)dt+\psi(t)$ on $P\times(t_1,t_2)$, where $(\om(t),\psi(t),f(t))$ is some one-parameter family of $\so3$-invariant sLag $\su3$-structures $(\om,\psi)$ and $\so3$-invariant positive functions $f$ on some $P$, defined on some interval $(t_1,t_2)$. Let $(e(t),a(t),S(t),f(t))$ be a one-parameter family defined on an interval $(t_1,t_2)$ of the triples corresponding to $(\om,\psi)$ and $\so3$-invariant positive functions $f$ on $P$.  We use the notation in Subsection \ref{subsec:6_1}.

\begin{prop}\label{prop:6_3}
The $G_2$-structure $\om(t)\w f(t)dt+\psi(t)$ is torsion-free if and only if the one-parameter family $(e(t),a(t),S(t).f(t))$ satisfies the following equations:
\begin{align}
  \label{eq:6_5}&  T_{ij}=0,\quad S_{i\al;\al}=0,\quad d(f\dets^{-\frac{1}{2}})=0;\\
 \label{eq:6_6} &\pt{e^i}{t}=f\dets^{-\frac{1}{2}}\tilde{S}_{ik}e^k,\\ \label{eq:6_7}&\pt{S_{ij}}{t}=-f\dets^{-\frac{1}{2}}\left(\mathrm{tr}(\tilde{S})S_{ij}-2\dets\de_{ij} + \Om_{ij}\right)
\end{align}
for $i,j=1,2,3$. 
\end{prop}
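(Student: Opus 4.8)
The plan is to follow the same template as the proof of Proposition \ref{prop:loc_t3_1}, but carrying covariant derivatives throughout. By Proposition \ref{prop:flow}, the $G_2$-structure $\om(t)\w f(t)dt+\psi(t)$ is torsion-free if and only if $(\om(t),\psi(t),f(t))$ satisfies the half-flat conditions together with $\partial\psi/\partial t=d(f\om)$ and $\partial(\frac{1}{2}\om\w\om)/\partial t=d(f\psi^{\#})$ at each $t$. So first I would rewrite the half-flat conditions via Corollary \ref{cor:6a}(1): they become exactly $T_{ij}=0$ and $S_{i\al;\al}=0$, which is the first line of \eqref{eq:6_5} (minus the $d(f\dets^{-1/2})=0$ piece, which will drop out of the other two equations).

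Next I would set $\dot{e}^i=p_{ij}\hat{e}^j$, $\dot{a}^i=q_{ij}\hat{e}^j$ and compute $\partial\psi/\partial t$ and $\partial(\frac{1}{2}\om\w\om)/\partial t$ from \eqref{eq:su3c}--\eqref{eq:su3f}, using the $\so3$ structural formulas in Lemma \ref{lem:6a} for $\partial\hat{a}^i/\partial t$, $\partial e^{123}/\partial t$, etc. In parallel I would read off $d(f\om)$ and $d(f\psi^{\#})$ directly from Proposition \ref{prop:6b} (equations \eqref{eq:6_1} and \eqref{eq:6_4}), with the extra $\hat{a}^i e^j$ and $\hat{a}^k\hat{e}^k$ terms that distinguish the $\so3$ case from $\T3$. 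Matching coefficients of the independent 3-forms $e^{123}$, $a^i\hat{e}^j$, $\hat{a}^i\hat{e}^j$, $a^{123}e^i$ on both sides of the two evolution equations yields, after imposing $T=0$: an expression for $q_{ij}$ (the analogue of \eqref{eq:5_33}, now with covariant derivatives and an extra term coming from the $\hat a\,e$ term in \eqref{eq:6_1}), the relation $p_{ij}=\dets^{-1/2}\tilde S_{ij}\cdot f$ or its trace-adjusted form giving \eqref{eq:6_6}, the equation for $\partial S_{ij}/\partial t$ giving \eqref{eq:6_7}, the scalar equation $d(f\dets^{-1/2})=0$, and a consistency equation for $\partial(\det S)/\partial t$ that I would verify is automatically implied using Lemma \ref{lem:5a}. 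The $\so3$-analogue of Lemma \ref{lem:5b} — namely that $CS-SC=\dets^{-1/2}SDS$ with $C_{ij}=\e_{i\al\be}(\dets^{-1/2}\tilde S_{j\al})_{;\be}$ and $D_{ij}=\e_{\al ij}S_{\al\be;\be}$, so that $CS=SC$ once $S_{i\al;\al}=0$ — will be needed to see that the coefficient of $a^i\hat e^j$ in $\partial\psi/\partial t=d(f\om)$ is consistent (symmetric) and does not impose an extra constraint beyond \eqref{eq:6_6}. I should double-check whether Lemma \ref{lem:5b} as stated already covers the covariant-derivative version; its proof uses $de=0$, which here corresponds to $d_He=T=0$, so the same computation goes through with $d$ replaced by $d_H$.

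The main obstacle I anticipate is bookkeeping the non-commutative terms: the $-\hat a^i$ in $da^i$, the $-\ep_{i\al\be}a^\al e^\be$ in $de^i$, and the extra $\dets^{-1/2}\tilde S_{ij}\hat a^i e^j$ in \eqref{eq:6_1} and $-\dets^{1/2}\hat a^k\hat e^k$ in \eqref{eq:6_4} all feed into the coefficient-matching and must combine to produce precisely the $-\mathrm{tr}(\tilde S)S_{ij}+2\dets\de_{ij}$ piece in \eqref{eq:6_7} (via identities like $\e_{i\al\be}\e_{j\al\ga}\tilde S_{\be\ga}=\mathrm{tr}(\tilde S)\de_{ij}-\tilde S_{ji}$ together with $\tilde S S=\dets I$) and the $\tilde S_{ij}e^j$ drift term in \eqref{eq:6_6}. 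Once the coefficients are correctly collected, converting $p_{ij}\hat e^j$ into $\partial e^i/\partial t=f\dets^{-1/2}\tilde S_{ik}e^k$ uses the elementary fact that a 1-form $\om^i$ with $\om^i=p_{ij}\hat e^j$ equals $c_{ij}e^j$ where $c$ and $p$ are adjugate-related, i.e. one passes between the two bases $\{e^j\}$ and $\{\hat e^j\}$ of 1-forms. I would close by noting, as in the $\T3$ case and citing (\cite{Chi}, Proposition 7), that the flow \eqref{eq:6_6}--\eqref{eq:6_7} preserves the constraints \eqref{eq:6_5}, so the characterization is genuinely a constrained dynamical system; the scaling argument then upgrades this to Theorem \ref{thm:b1} exactly as in the toric case.
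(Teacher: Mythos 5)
Your proposal follows essentially the same route as the paper's proof: reduce to Proposition \ref{prop:flow}, translate the half-flat condition via Corollary \ref{cor:6a}, compute $\partial\psi/\partial t$ and $\partial(\tfrac12\om\w\om)/\partial t$ as in the $\T3$ case, match coefficients of $e^{123}$, $a^i\hat e^j$, $\hat a^i\hat e^j$, $\hat a^ie^j$ and $a^{123}e^i$ against Proposition \ref{prop:6b}, and close the system with the covariant versions of Lemmas \ref{lem:5a} and \ref{lem:5b} together with Lemma \ref{lem:6_9}. One small correction: the expansions should read $\dot e^i=p_{ij}e^j$ and $\dot a^i=q_{ij}e^j$ (a 1-form cannot equal $p_{ij}\hat e^j$, which is a 2-form --- the $\hat e^j$ in the printed proof of Proposition \ref{prop:loc_t3_1} is a typo), so \eqref{eq:6_6} drops directly out of the $\hat a^ie^j$-coefficient as $p_{ij}=f\dets^{-\frac12}\tilde S_{ij}$ and the adjugate base-change you describe at the end is unnecessary.
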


\begin{remark}
Since $d_He=T_{ij}\hat{e}^jY_i=0$, the behavior of $a(t)$ is determined by (\ref{eq:6_6}). See the lemma below.
\end{remark}

\begin{lemma}\label{lem:6_9}
Suppose that $d_He=0$, $\dfrac{\partial e^i}{\partial t}=p_{ik}e^k$ and $p_{ij}=p_{ji}$ for $i,j=1,2,3$. Then we have
\begin{align*}
    \pt{a^i}{t}=-\e_{i\al\be}p_{j\al;\be}e^j \quad{for}\quad i=1,2,3.
\end{align*}
\end{lemma}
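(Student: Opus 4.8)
\textbf{Proof proposal for Lemma \ref{lem:6_9}.}

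The plan is to differentiate the structure equation $d_He = de + [a\wedge e] = 0$ with respect to the parameter $t$ and extract the $t$-derivative of the connection. Since this identity holds for every $t$, I first differentiate it term by term, obtaining $d(\dot e) + [\dot a\wedge e] + [a\wedge \dot e] = 0$, i.e. $d_H(\dot e) + [\dot a\wedge e] = 0$, where I write $\dot{(\ )}$ for $\partial/\partial t$. The key point is that $[\dot a\wedge e]$ is a $\fso3$-valued 2-form whose coefficients recover $\dot a$, because $\fso3 \cong \R^3$ via the cross product; concretely, if $\dot a = \dot a^i Y_i$ then $[\dot a\wedge e]$ pairs $\dot a$ against $e$ in a way that can be inverted. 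So I would rewrite $d_H(\dot e) = -[\dot a\wedge e]$ and solve for $\dot a$.

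The concrete steps: using the hypothesis $\dot e^i = p_{ik}e^k$ with $p_{ij}=p_{ji}$, compute $d_H(\dot e^i) = d_H(p_{ik}e^k) = p_{ik;\al}e^{\al}\wedge e^k + p_{ik}\,d_He^k = p_{ik;\al}e^{\al}\wedge e^k$, since $d_He=0$. Writing $e^{\al}\wedge e^k = \e_{\al k j}\hat e^j$ and using the symmetry of $p$ together with the identity $\e_{\al k j}p_{ik;\al}$ (contract carefully — the antisymmetrization in $\al,k$ of $p_{ik;\al}$ is what survives), one gets $d_H(\dot e^i) = (\text{something})_{ij}\hat e^j Y_i$. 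On the other side, from Lemma \ref{lem:6a} (or directly) $[\dot a\wedge e] = [\dot a^{\al}Y_{\al}\wedge e^{\be}Y_{\be}] = \dot a^{\al}e^{\be}\e_{\al\be\ga}Y_{\ga}$, and rewriting $\dot a^{\al}e^{\be}$ in terms of $\hat e$ via the solder coframe lets me match coefficients of $\hat e^j Y_i$ on both sides. Equating the two expressions and inverting the resulting linear relation yields $\dot a^i = -\e_{i\al\be}p_{j\al;\be}e^j$.

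I expect the main obstacle to be purely bookkeeping with the Levi-Civita symbols: making sure the contraction of $p_{ik;\al}e^{\al}\wedge e^k$ is handled correctly given that $p$ is symmetric in its first two (non-derivative) indices, and that the covariant derivative $d_H$ acting on the $\fso3$-valued object $p$ (which is a function, not a scalar — it transforms under the adjoint action) is the one from Lemma \ref{lem:6a}, so that extra $a$-terms cancel consistently. There is no genuine analytic difficulty; once the index identity $\e_{i\al\be}(\,\cdot\,)$ is set up, solving for $\dot a^i$ is a one-line inversion. A secondary check is that the answer is well-defined, i.e. independent of any choice, which follows because the coframe $e$ is fixed and $p$ is determined by the data; I would note this at the end.
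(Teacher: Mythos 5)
Your proposal is correct and is exactly the direct calculation the paper has in mind (the paper itself only says ``direct calculation'' and defers the details to \cite{Chi}, Lemma 9): differentiating $d_He=0$ in $t$ gives $d_H(\dot e)=-[\dot a\wedge e]$, the left side is $\e_{j\al k}p_{ik;\al}\hat e^jY_i$, the right side is $(q_{ji}-\de_{ij}\mathrm{tr}(q))\hat e^jY_i$ for $\dot a^i=q_{ij}e^j$, and the symmetry of $p$ forces $\mathrm{tr}(q)=0$, after which the linear relation inverts uniquely to $q_{ij}=-\e_{i\al\be}p_{j\al;\be}$. The two points you flag as needing care (the Leibniz rule for $d_H$ on the equivariant matrix $p$, and the invertibility of $q\mapsto \mathrm{tr}(q)I-{}^tq$) are indeed the only nontrivial checks, and both go through.
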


\begin{proof}
We can prove this by direct calculation. See (\cite{Chi}, Lemma 9).
\end{proof}

Note that Lemma \ref{lem:5a} and \ref{lem:5b} hold by replacing the derivatives with covariant derivatives. 

\begin{proof}[Proof of Proposition \ref{prop:6_3}]
Most of the proof is the same as that of Proposition \ref{prop:loc_t3_1}. Set $\dot{e}^i=p_{ij}e^j$ and $\dot{a}^i=q_{ij}e^j$ for $i=1,2,3.$ 

The $G_2$-structure $\om(t)\w f(t)dt+\psi(t)$ is torsion-free if and only if $(\om(t), \psi(t),f(t))$ satisfies the half-flat condition, $\dfrac{\partial \psi}{\partial t}=d(f\om)$ and $\dfrac{\partial }{\partial t}(\dfrac{1}{2}\om\we\om)=d(f\psi^{\#})$ 
 for  each $t$. Let us rewrite these conditions by the one-parameter family $(e(t),a(t),S(t),f(t))$ corresponding to $(\om(t),\psi(t),f(t))$.
 
 By Corollary \ref{cor:6a}, the half-flat condition is equivalent to 
 \begin{align}\label{eq:6hf}
     T_{ij}=0 \quad{and}\quad S_{i\al;\al}=0
 \end{align}
 for $i,j=1,2,3.$
 
 Just as in the proof of Proposition \ref{prop:loc_t3_1}, we have 
 \begin{align}\label{eq:6_9} 
 \pt{\psi}{t} ={} -(\pt{\dets}{t} + \dets p_{\al\al})e^{123} + (\delta_{ij}q_{\al\al}-q_{ji})a^i\hat{e}^j+p_{ij}\hat{a}^ie^j,
 \end{align}
 and
\begin{align}\label{eq:6_10}
\pt{}{t}\left(\frac{1}{2}\om\w\om\right)= -\e_{i\al\be}q_{\al\ga}S_{\ga\be}a^ie^{123}+(S_{i\al}p_{j\al}-S_{ij}p_{\al\al}-\pt{S_{ij}}{t})\hat{a}^i\hat{e}^j.
\end{align}

On the other hand, by Proposition \ref{prop:6b}, we have 
\begin{align}\label{eq:6_11}
    d(f\om) &={} f\dets^{-\frac{1}{2}}\tilde{S}_{\al\be}\Om_{\al\be}e^{123} \\
  \no  &\quad{} + \left(\e_{j\al\be}(f\dets^{-\frac{1}{2}}\tilde{S}_{i\al})_{;\be}-f\dets^{-\frac{1}{2}}\tilde{S}_{i\al}T_{\al j}\right)a^i\hat{e}^j \\
 \no &\quad{}+ f\dets^{-\frac{1}{2}}\tilde{S}_{ij}\hat{a}^ie^j,
\end{align}
and
\begin{align}\label{eq:6_12}
    d(f\psi) &={} \left((f\dets^{\frac{1}{2}})_{;i}+f\dets^{\frac{1}{2}}\e_{i\al\be}T_{\al\be}\right)a^ie^{123} \\
  \no  &\quad{}+ (f\dets^{-\frac{1}{2}}\Om_{ij}-f\dets^{\frac{1}{2}}\delta_{ij})\hat{a}^i\hat{e}^j \\
  \no &\quad {} -(f\dets^{-\frac{1}{2}})_{;i}a^{123}e^i.
\end{align}

By (\ref{eq:6_9}) and (\ref{eq:6_10}), $\dfrac{\partial \psi}{\partial t}=d(f\om)$ is equivalent to the following:
\begin{align}
   \label{eq:6_13} &\pt{\dets}{t} + \dets p_{\al\al}= -f\dets^{-\frac{1}{2}}\tilde{S}_{\al\be}\Om_{\al\be}, \\
   \label{eq:6_14} &\delta_{ij}q_{\al\al}-q_{ji}=\e_{j\al\be}(f\dets^{-\frac{1}{2}}\tilde{S}_{i\al})_{;\be}-f\dets^{-\frac{1}{2}}\tilde{S}_{i\al}T_{\al j}, \\
   \label{eq:6_15} &p_{ij}=f\dets^{-\frac{1}{2}}\tilde{S}_{ij}
\end{align}
for $i,j=1,2,3.$ Assuming $T=0$ combined with $S={}^tS$, in the same way as (\ref{eq:5_33}), (\ref{eq:6_14}) is equivalent to 
\begin{align}\label{eq:6_16}
    q_{ij}&=-\e_{i\al\be}(f\dets^{-\frac{1}{2}}\tilde{S}_{j\al})_{;\be} \\
 \no&= -\e_{i\al\be}f_{;\be}\dets^{-\frac{1}{2}}\tilde{S}_{j\al}-fC_{ij}.
\end{align}

By (\ref{eq:6_10}) and (\ref{eq:6_12}), $\dfrac{\partial }{\partial t}(\dfrac{1}{2}\om\we\om)=d(f\psi^{\#})$ is equivalent to the following:
\begin{align}
  \label{eq:6_17}  &-\e_{i\al\be}q_{\al\ga}S_{\ga\be}=(f\dets^{\frac{1}{2}})_{;i}+f\dets^{\frac{1}{2}}\e_{i\al\be}T_{\al\be},\\
 \label{eq:6_18}   &S_{i\al}p_{j\al}-S_{ij}p_{\al\al}-\pt{S_{ij}}{t}=f\dets^{-\frac{1}{2}}\Om_{ij}-f\dets^{\frac{1}{2}}\delta_{ij}, \\
   \label{eq:6_19} &(f\dets^{-\frac{1}{2}})_{;i}=0
\end{align}
for $i,j=1,2,3.$ By (\ref{eq:6_16}), Lemma \ref{lem:5b} and the assumption of $T=0$, we see that (\ref{eq:6_17}) is equivalent to (\ref{eq:6_19}). Also by Lemma \ref{lem:5a}, (\ref{eq:6_13}) follows from (\ref{eq:6_18}). Thus, using Lemma \ref{lem:6_9} and summarizing (\ref{eq:6hf}), (\ref{eq:6_15}), (\ref{eq:6_16}), (\ref{eq:6_18}) and (\ref{eq:6_19}), we obtain the conditions in Proposition \ref{prop:6_3}.
\end{proof}

Now we can immediately prove Theorem \ref{thm:b} and \ref{thm:b1}.

\begin{proof}[Proof of Theorem \ref{thm:b} and \ref{thm:b1}]
By setting $f=(\det S)^{\frac{1}{2}}$ in Proposition \ref{prop:6_3}, we obtain Theorem \ref{thm:b}. Moreover, by scaling of the parameter $t$, we can deduce Theorem \ref{thm:b1} from Proposition \ref{prop:red_1} and \ref{prop:6_3}.
\end{proof}

\begin{remark}
By Lemma \ref{lem:6_9}, we can see that  if  triples $(e(t), a(t), S(t))$ develope along the equations in Theorem \ref{thm:b}, then we have $\dfrac{\partial a}{\partial t}=-\e_{i\al\be}\tilde{S}_{i\al;\be}e^j$ for $i=1,2,3.$
\end{remark}

\begin{remark}
The equations of motion preserve the constraint conditions in Theorem \ref{thm:b}. This is proved by direct calculation in (\cite{Chi}, Proposition 7). In the paper, we also gave a Hamiltonian formulation of Theorem \ref{thm:b} and some observations on Bryant-Salamon's examples \cite{BS}.
\end{remark}

\begin{example}
Let $M=\su2$, i.e., $P=M\times \so3$. Fix a global section of $P$, and by the pull-back, regard a triple $(e(t),a(t),S(t))$ in Theorem \ref{thm:b} as 1-forms and functions on $M$. Assume $(e(t),a(t),S(t))$ is left-invariant for the group structure on $M$. The equations in Theorem \ref{thm:b} are reduced to the following ordinary differential system. This situation is contained in that of \cite{MS2}. Let $\theta^1,\theta^2,\theta^3$ be left-invariant 1-forms on $M$ satisfying $d\theta^i=-\hat{\theta}^i$ for $i=1,2,3.$ Using $A(t)=(A(t)_{ij}), B(t)=(B(t)_{ij})$ and $C(t)=(C(t)_{ij}) \in \rm{M}(3;\R)$, put  $e^i=A_{ij}e^j$, $a^i=B_{ij}e^j$ and $de^i=C_{ij}\hat{e}^j$ for $i=1,2,3.$ Then $C=-\det(A)\cdot A \cdot {}^tA$, and by simple computation, we can see that (\ref{eq:motion}) in Theorem \ref{thm:b} is equivalent to the following:
\begin{align}
    \frac{dA}{dt} = \tilde{S}A\quad{and} \quad \frac{dS}{dt} = -\mathrm{tr}(\tilde{S})\cdot S +  2\dets\cdot I -C^2 - \tilde{C} + \frac{1}{4}(\mathrm{tr}(C))^2\cdot I .\label{eq:red1}
\end{align}
Here by the  Levi-Civita condition, $B=C-(1/2)\mathrm{tr}(C)\cdot I$.
Moreover, the condition $S_{ij;j}=0$, for $i=1,2,3$, is equivalent to $[S,B]$=0, and this is preserved by solutions of (\ref{eq:red1}). 
\end{example}

\begin{example}
Let $M=\R^3$, i.e., $P=\R^3\times \so3$. Take 1-forms $\theta^1=dr$, $\theta^2=d\rho$ and $\theta^3=\sin(\rho)d\xi$ on $\R^3\setminus \{0\}$, where $(r,\rho, \xi)$ is the polar coordinate on $\R^3$. Using positive functions $f(r,t),g(r,t),k(r,t),l(r,t)$ for $r$ and $t$, put $(e(t)^1,e(t)^2,e(t)^3)=(f(r,t)\theta^1,g(r,t)\theta^2,g(r,t)\theta^3)$ and $S=((k(r,t),0,0),(0,l(r,t),0),(0,0,l(r,t)))$. Then (\ref{eq:const}) and (\ref{eq:motion}) in Theorem \ref{thm:b} are equivalent to the following: 
 \begin{eqnarray*}
 &&\frac{\partial f}{\partial t} = fl^2\quad{and}\quad \frac{\partial k}{\partial t}= kl^2-2k^2l-(\frac{1}{fg}\cdot\frac{\partial g}{\partial r})^2 + \frac{1}{g^2}, \\
 && \frac{\partial g}{\partial t}= gkl\quad{and}\quad \frac{\partial l}{\partial t}= -l^3 - \frac{1}{f^2g}\cdot\frac{\partial^2 g}{\partial r^2} + \frac{1}{f^3g}\cdot\frac{\partial f}{\partial r}\cdot\frac{\partial g}{\partial r}, \\
 &&\frac{\partial k}{\partial r} - \frac{2}{g}\cdot\frac{\partial g}{\partial r}\cdot(l-k)=0.
 \end{eqnarray*}
The last condition is preserved by solutions of the above four equations. Then the trivial solutions corresponding to flat metrics are the following:
 \begin{eqnarray*}
 f(r,t)=\al(2t+\be)^{\frac{1}{2}}, \quad g(r,t)=\al(2t+\be)^{\frac{1}{2}}r,\quad l(r,t)^2=k(r,t)^2=(2t+\be)^{-1},
 \end{eqnarray*}
 where $\al,\be \in \R$.
\end{example}

\subsection*{Acknowledgements}

The author would like to thank N.\ Kawazumi for his encouragement and the anonymous referee for many suggestions. This work was supported by the Leading Graduate Course for Frontiers of Mathematical Sciences and Physics.

\bibliographystyle{alpha}
\bibliography{cplx}

\end{document}